\documentclass[12pt]{amsart}
\usepackage{amssymb}
\usepackage{hyperref}
\usepackage[margin=1.25in]{geometry}
\usepackage[utf8]{inputenc}
\usepackage{chngcntr}
\usepackage{apptools}
\usepackage{thm-restate}
\usepackage{url}
\AtAppendix{\counterwithin{theorem}{section}}
\usepackage{filecontents}

\newtheorem{theorem}{Theorem}[section]

\newtheorem{proposition}[theorem]{Proposition}
\newtheorem{lemma}[theorem]{Lemma}
\newtheorem{corollary}[theorem]{Corollary}
\newtheorem{claim}[theorem]{Claim}
\theoremstyle{remark}
\newtheorem{remark}[theorem]{Remark}
\theoremstyle{definition}
\newtheorem{definition}[theorem]{Definition}


\newcommand{\origin}{\mathfrak{o}}

\providecommand{\keyywords}[1]
{
  \small	
  \textbf{\textit{Keywords---}} #1
}








\newcommand{\R}{\mathbb{R}}  
\newcommand{\G}{\mathcal{G}_{n+1}}





\begin{document}


\title{Rotationally symmetric Ricci flow on $\mathbb{R}^{n+1}$}


\author{Francesco Di Giovanni}
\address{Department of Mathematics, University College London, Gower Street, London, WC1E 6BT, United Kingdom} 
\email{francesco.giovanni.17@ucl.ac.uk}





\begin{abstract}
We study the Ricci flow on $\mathbb{R}^{n+1}$, with $n\geq 2$, starting at some complete bounded curvature rotationally symmetric metric $g_{0}$. We first focus on the case where $(\mathbb{R}^{n+1},g_{0})$ does not contain minimal hyperspheres; we prove that if $g_{0}$ is asymptotic to a cylinder, then the solution develops a Type-II singularity and converges to the Bryant soliton after scaling, while if the curvature of $g_{0}$ decays at infinity, then the solution is immortal. As a corollary, we prove a conjecture by Chow and Tian about Perelman's standard solutions. We then consider a class of asymptotically flat initial data $(\mathbb{R}^{n+1},g_{0})$ containing a neck and we prove that if the neck is sufficiently pinched, in a precise way, the Ricci flow encounters a Type-I singularity. 
\end{abstract}
\maketitle
\keyywords{Ricci flow; Bryant Soliton; Standard solutions; Neckpinch.}

\section{Introduction}
Let $M$ be a smooth manifold. Given an initial metric $g_{0}$, Hamilton's Ricci flow is the evolution equation \cite{threemanifolds}
\[
\frac{\partial g}{\partial t} = - 2\,\text{Ric}_{g(t)}, \,\,\,\,\,\,\,\,\,\, g(0) = g_{0}.
\]
\noindent A Ricci flow solution $(M,g(t))_{0\leq t < T}$ whose maximal time of existence $T$ is finite can be classified as follows \cite{formationsingularities}:
\begin{align*}
\emph{Type-I}&:\,\,\,\,\,\sup_{M\times [0,T)}\lvert \text{Rm}_{g(t)}\rvert_{g(t)}(T-t) < \infty, \\
\emph{Type-II}&:\,\,\,\,\,\sup_{M\times [0,T)}\lvert \text{Rm}_{g(t)}\rvert_{g(t)}(T-t) = \infty.
\end{align*}
Since the Ricci flow is invariant under the action of the diffeomorphism group, it is natural to evolve metrics with symmetries; in this regard several properties of singular rotationally invariant Ricci flows have been analysed. 

In \cite{exampleneck},\cite{precyse} Angenent and Knopf constructed the first examples of finite time neckpinches by evolving a family of rotationally symmetric metrics on $S^{n+1}$ containing a stable minimal $n$-sphere. The first examples of Type-II singularities in dimension three or higher were produced by Gu and Zhu in \cite{type2}, where they studied a family of rotationally symmetric metrics on $S^{n+1}$. Later Angenent, Isenberg and Knopf proved that on $S^{n+1}$ there exist Ricci flows which behave like degenerate neckpinches, with the singularity modelled on the Bryant soliton \cite{degenerateneck}. Similarly, Wu found rotationally symmetric solutions on $\R^{n+1}$ which encounter a Type-II singularity and converge to the Bryant soliton \cite{wu}; however, Wu's argument only applies to initial data whose profile function converges uniformly to that of the Bryant soliton near the origin. 


In our first result we show that a large class of rotationally symmetric Ricci flows on $\R^{n+1}$ develop a finite time Type-II singularity modelled on the Bryant soliton \cite{bryant}.

\begin{theorem}\label{maintheoremnospheres}
Let $(\R^{n+1},g(t))_{0\leq t < T}$, with $n\geq 2$, be the Ricci flow solution evolving from a complete bounded curvature rotationally symmetric metric $g_{0}$. If $(\R^{n+1},g_{0})$ does not contain minimal hyperspheres and $g_{0}$ is asymptotic in $C^{0}$ to a round cylinder at infinity, then the solution develops a Type-II singularity at $T < \infty$ and converges to the Bryant soliton in the Cheeger-Gromov sense once suitably rescaled.
\end{theorem}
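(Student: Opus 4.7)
The plan is to reduce the flow to a scalar parabolic PDE in warped-product coordinates, force a finite-time singularity by cylindrical comparison, and then identify the rescaled limit as the Bryant soliton via compactness and rigidity.

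\textbf{Setup, finite-time singularity, and localisation.} I would first write $g(t) = ds^2 + \varphi(s,t)^2\, g_{S^n}$ on $[0,\infty)\times S^n$, where $s$ is the $g(t)$-distance from the origin $\origin$, so that the Ricci flow becomes a parabolic system for $\varphi$ and $\psi:=\varphi_s$. The hypothesis that $g_0$ contains no minimal hyperspheres reads $\psi > 0$ on $(0,\infty)$, and a maximum principle argument on the evolution of $\psi$ (as in Angenent--Knopf) shows this condition is preserved along the flow. Since $g_0$ is $C^0$-asymptotic to a round cylinder of radius $r_\infty > 0$ at infinity, the spherical sectional curvature $K_1 = (1-\psi^2)/\varphi^2$ is uniformly bounded below at $t=0$ near infinity; a comparison with the explicit shrinking cylinder then forces the maximal existence time to satisfy $T \leq r_\infty^{\,2}/(2(n-1)) < \infty$. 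Using $\psi > 0$ one further shows that the flow remains smooth on every closed subset avoiding $\origin$ up to $T$, so the singular set reduces to the tip.

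\textbf{Ruling out Type-I.} Suppose toward a contradiction that the singularity is of Type-I. I would pick a sequence $(x_i,t_i)$ with $t_i \uparrow T$, $x_i \to \origin$, $Q_i := |\mathrm{Rm}|_{g(t_i)}(x_i) \to \infty$ and $Q_i(T-t_i)$ bounded, and rescale parabolically. Perelman's $\kappa$-noncollapsing, Shi's derivative estimates and Hamilton's compactness theorem, combined with the preservation of nonnegative sectional curvatures (verified directly on $K_0$ and $K_1$ under the given hypotheses) and of $\psi > 0$, give subconvergence to a non-flat complete rotationally symmetric shrinking gradient Ricci soliton on $\R^{n+1}$ with $\psi > 0$ (via the Enders--M\"uller--Topping characterisation of Type-I blow-ups). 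A direct ODE analysis of the soliton equation under rotational symmetry then rules out any non-flat shrinker of bounded curvature on $\R^{n+1}$, yielding the contradiction.

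\textbf{Type-II blow-up and Bryant convergence.} Since the singularity is therefore Type-II, I would choose a Hamilton-type rescaling at points nearly maximising $(T-t)|\mathrm{Rm}|$ and extract, via the same compactness machinery, an eternal $\kappa$-noncollapsed rotationally symmetric Ricci flow on $\R^{n+1}$ of nonnegative sectional curvature attaining its space-time supremum of $|\mathrm{Rm}|$. By Hamilton's rigidity theorem such a flow is a steady gradient Ricci soliton, and Brendle's classification of $\kappa$-noncollapsed steady solitons in dimensions $\geq 3$ identifies it as the Bryant soliton. The main obstacle is exactly the rigidity step: one needs to know that the pointed blow-up limit is non-flat, has bounded curvature, is $\kappa$-noncollapsed, has positive sectional curvature, and retains the tip structure, so that Brendle's theorem truly applies. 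This demands delicate a priori control on the profile function $\varphi$ near $\origin$ and on the concentration of curvature at the tip, which forms the technical heart of the argument.
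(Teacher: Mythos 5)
Your overall architecture (finite $T$ by cylindrical comparison, rule out Type-I via a shrinking-soliton blow-up, then Hamilton rescaling plus rigidity for the Bryant soliton) matches the paper's, but two of your key intermediate claims do not follow from the stated hypotheses, and they are precisely the points the paper has to work around.

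First, you claim that nonnegativity of the sectional curvatures $K_0,K_1$ is ``verified directly under the given hypotheses.'' The theorem assumes only that $g_0$ is complete, has bounded curvature, contains no minimal hyperspheres, and is $C^0$-asymptotic to a cylinder; there is no curvature sign assumption, and no such sign is preserved or derivable for the flow itself. The paper obtains nonnegativity of the curvature operator only for the \emph{blow-up limits}, and the mechanism is entirely different: rotational symmetry forces the evolving metrics to be locally conformally flat, conformal flatness passes to Cheeger--Gromov limits, and then the classification of conformally flat ancient solutions/shrinkers (Zhang for $n\geq 3$, Chen's strong uniqueness for $n=2$) yields nonnegative curvature operator of the limit. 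Relatedly, you assume the blow-up limit is rotationally symmetric so that you can run an ``ODE analysis of the soliton equation under rotational symmetry''; the paper explicitly remarks that rotational symmetry of the singularity model is \emph{not} known a priori, which is exactly why it routes everything through conformal flatness instead. Without one of these two inputs your Type-I exclusion has no classification theorem to appeal to.

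Second, even granting a classification of the shrinking limit, you must rule out the round cylinder $\R\times S^n$, which is a perfectly good non-flat conformally flat shrinker and is in fact the generic outcome of Zhang's theorem in the simply connected noncompact case. Your proposal silently assumes the limit manifold is $\R^{n+1}$. The paper proves this is impossible for the cylinder by a topological argument: using rotational symmetry and the uniform comparability of the rescaled geodesic balls with annular regions $\bigcup_\theta B_{g_j(0)}((x_j,\theta),\nu)$, it shows the limit is exhausted by open sets diffeomorphic to $\R^{n+1}$, and $\R\times S^n$ admits no such exhaustion. Some version of this step is indispensable. Your final step (Brendle's classification of steady solitons) is a legitimate alternative to the paper's Hamilton--Cao route --- the paper notes this in a remark --- but in dimensions above three it again requires knowing the limit is rotationally symmetric (via Catino et al.) before Li--Zhang applies, so it does not let you avoid the issues above.
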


According to Theorem \ref{maintheoremnospheres} in the rotationally symmetric case the cylindrical behaviour at infinity determines the dynamics of the flow as long as there are no minimal hyperspheres. In light of this, one might expect that if instead the curvature of $g_{0}$ decays at infinity then the solution evolving from $g_{0}$ should be immortal. In this direction, Oliynyk and Woolgar proved that on $\R^{n+1}$ if $g_{0}$ is rotationally symmetric, \emph{asymptotically flat} and with no minimal embedded hyperspheres, then the Ricci flow solution starting at $g_{0}$ is immortal \cite{woolgar}. Our second result extends the long-time existence property in \cite{woolgar} to initial data that need not be close to the Euclidean metric outside a compact region.

\begin{theorem}\label{maintheoremnospheresimmortal}
Let $(\R^{n+1},g(t))_{0\leq t < T}$, with $n\geq 2$, be the Ricci flow solution evolving from a complete rotationally symmetric metric $g_{0}$ with curvature decaying at infinity. If $(\R^{n+1},g_{0})$ does not contain minimal hyperspheres, then the solution is immortal.
\end{theorem}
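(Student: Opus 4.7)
The plan is to argue by contradiction: assume the maximal time of existence $T$ is finite. Writing the flow in arc-length coordinates as $g(t) = ds^2 + \psi(s,t)^2 g_{S^n}$, where $s$ denotes the $g(t)$-distance from the tip $\origin$, the profile function $\psi$ satisfies the quasilinear parabolic equation
\[
\psi_t \;=\; \psi_{ss} \;-\; (n-1)\,\frac{1 - \psi_s^2}{\psi},
\]
subject to the smoothness boundary conditions $\psi(0,t)=0$ and $\psi_s(0,t)=1$, and the sectional curvatures take the familiar form $K_0 = -\psi_{ss}/\psi$ and $K_1 = (1-\psi_s^2)/\psi^2$. The first step would be to show that the no-minimal-hyperspheres assumption is preserved along the flow, i.e.\ $\psi_s(s,t)>0$ for all $s>0$ and $t\in[0,T)$, by a maximum-principle argument on the evolution equation for $\psi_s$ in the spirit of \cite{precyse, exampleneck}. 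Consequently $\psi(\cdot, t)$ remains strictly increasing for each $t$, and any potential singularity must concentrate either at the tip or at spatial infinity.

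The second step rules out the latter possibility. Combining Perelman's pseudolocality theorem with the assumed decay of $|\text{Rm}_{g_0}|$ at infinity yields, for each $\tau<T$, a radius $R(\tau)$ and a constant $C(\tau)$ such that $|\text{Rm}_{g(t)}|\leq C(\tau)$ on $\{s>R(\tau)\}\times[0,\tau]$. Hence any curvature blow-up must be confined to a compact neighbourhood of the tip. Selecting $(x_k, t_k)$ with $Q_k := |\text{Rm}|(x_k, t_k) \to \infty$, rotational symmetry and the outer bound force $x_k$ to remain in a bounded set. A parabolic rescaling by $Q_k$, together with Perelman's no-local-collapsing theorem and Hamilton's compactness theorem, then produces a complete, rotationally symmetric, bounded-curvature ancient Ricci flow on $\R^{n+1}$ with no minimal hyperspheres and unit-scale curvature at the reference point. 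Invoking the rigidity arguments underlying Theorem \ref{maintheoremnospheres}, such a limit is forced to be the Bryant soliton (up to scaling).

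The main obstacle is to convert this identification of the singularity model into a genuine contradiction with the curvature decay hypothesis on $g_0$. The strategy is to exploit the fact that the Bryant soliton has an approximately cylindrical region at every sufficiently large geometric scale: the emergence of Bryant as a blow-up limit of the tip geometry therefore implies, in the unrescaled flow $g(t)$ for $t$ close to $T$, the presence of a near-cylindrical region at some definite positive scale. Propagating this cylindrical structure backward in time to $t=0$, by parabolic estimates combined with the monotonicity $\psi_s>0$ from the first step and the outer bounds from the second step, would yield a lower bound on $|\text{Rm}_{g_0}|$ at large $s$, contradicting the decay hypothesis. Making this backward propagation quantitative and uniform as $t\nearrow T$, via a careful barrier construction on $\psi$ adapted to the Bryant profile, is the delicate part of the argument.
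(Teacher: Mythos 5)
Your first two steps track the paper: preservation of $\phi_s>0$ is Lemma \ref{nominimalspheres}, and the localization of any blow-up to a compact neighbourhood of the tip via pseudolocality is Lemma \ref{controlinfinitelimit} (note that you need the outer curvature bound to be uniform up to $T$, not merely for each $\tau<T$, to confine the singular set; the paper gets this uniformity). The genuine gap is in your third step, and it is not merely "delicate" -- the mechanism you propose cannot produce a contradiction. The putative singularity forms in a \emph{compact} region around the tip, whereas the hypothesis you want to violate concerns the decay of $\lvert\mathrm{Rm}_{g_0}\rvert$ at spatial infinity. An approximately cylindrical region emerging near the origin at times close to $T$ (at a scale that in any case degenerates with the blow-up factor, so there is no "definite positive scale") is perfectly compatible with $g_0$ having arbitrarily fast curvature decay as $s\to\infty$; propagating structure backward in time near the tip says nothing about the geometry at infinity at $t=0$. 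Moreover, identifying the limit as the Bryant soliton along an arbitrary blow-up sequence is itself not free: one must first rule out the shrinking cylinder, and the Bryant rigidity in the paper's Type-II analysis uses a specially chosen sequence (or the Brendle/Li--Zhang classification).

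The idea you are missing is the one the paper actually uses: the contradiction is volumetric, not curvature-based. Since $\lim_{t\nearrow T}\phi(x,t)>0$ just outside the singular set and the profile is increasing, one extracts a radius $\tilde x$ with $\phi_s(\tilde x,t)\geq\tilde\beta>0$ uniformly in $t$ (Claim \ref{claim1}, via a Lipschitz-in-time argument for $\phi_s$), and then the maximum principle for \eqref{equationfirstderivative} forces $\phi_s\geq\beta>0$ on all of $B(\origin,\tilde x)\times[0,T)$. This linear lower bound $\phi(x,t_j)\geq\beta\,(s(x,t_j)-s(x_j,t_j))$ makes the rescaled annuli $V(j,\nu)$ have volume $\gtrsim\nu^{n+1}$, so the singularity model has positive asymptotic volume ratio. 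But every singularity model here is a non-flat $\kappa$-solution (Lemma \ref{characterizationsingularitymodels}), and $\kappa$-solutions have vanishing asymptotic volume ratio by Perelman \cite[Proposition 11.4]{pseudolocality}. That is the contradiction; without some substitute for this volume argument your outline does not close.
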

A simple application of Theorem \ref{maintheoremnospheres} and Theorem \ref{maintheoremnospheresimmortal} consists in classifying rotationally invariant Ricci flows with nonnegative bounded curvature.
\begin{corollary}\label{classificationpositive}
Let $(\R^{n+1},g(t))_{0\leq t < T}$, with $n\geq 2$, be the Ricci flow solution evolving from a complete rotationally symmetric metric $g_{0}$ with bounded nonnegative curvature. Then $T < \infty$ if and only if $g_{0}$ is asymptotic in $C^{0}$ to the round cylinder $\R \times S^{n}(r_{0})$ for some $r_{0} > 0$; in this case $T$ only depends on $n$ and $r_{0}$ and the solution develops a global Type-II singularity modelled on the Bryant soliton once suitably dilated.
\end{corollary}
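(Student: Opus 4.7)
The strategy is to exploit the rotational symmetry together with the pointwise nonnegativity of curvature to reduce the statement to Theorems \ref{maintheoremnospheres} and \ref{maintheoremnospheresimmortal}. Writing $g_{0} = ds^{2} + \phi(s)^{2} g_{S^{n}}$ in arc-length polar coordinates on $[0,\infty)$, the two sectional curvatures of $g_{0}$ are $-\phi''/\phi$ and $(1-(\phi')^{2})/\phi^{2}$, so pointwise nonnegativity forces $\phi$ to be concave and $|\phi'|\le 1$. Combined with the axial boundary conditions $\phi(0)=0$, $\phi'(0)=1$ and the completeness of $(\R^{n+1},g_{0})$, this pins $\phi' \in [0,1]$ throughout $[0,\infty)$; in particular no minimal hyperspheres exist, so Theorem \ref{maintheoremnospheres} and Theorem \ref{maintheoremnospheresimmortal} are both available as inputs.

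Concavity and monotonicity of $\phi$ split the discussion into two cases: either $\phi$ converges to a finite limit $r_{0}>0$ at infinity, or $\phi$ diverges to $+\infty$. In the former case $g_{0}$ is asymptotic in $C^{0}$ to the round cylinder $\R \times S^{n}(r_{0})$ and Theorem \ref{maintheoremnospheres} immediately gives a Type-II singularity at a finite $T$ modelled on the Bryant soliton. In the latter case, $(1-(\phi')^{2})/\phi^{2}\to 0$ automatically; for the radial sectional curvature $-\phi''/\phi$ the plan is to use that $\phi'$ is monotone and hence converges to some $L \in [0,1]$, so that $\int_{0}^{\infty}(-\phi'')\,ds = 1-L < \infty$. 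Combining this integrability with the bounded curvature hypothesis and a blow-up/compactness argument on $g_{0}$ should upgrade it to the pointwise decay $-\phi''/\phi \to 0$, after which Theorem \ref{maintheoremnospheresimmortal} yields $T=\infty$. This settles the equivalence.

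To identify $T$ and the type of singularity in the cylindrical case, I would compare $(\R^{n+1}, g(t))$ with the exact round cylinder $\R \times S^{n}(r_{0})$, whose Ricci flow exists precisely on $[0, r_{0}^{2}/(2(n-1)))$. Propagating the $C^{0}$ cylindrical asymptote along the flow via the maximum principle applied to the parabolic equation for the profile function forces $T = r_{0}^{2}/(2(n-1))$, which indeed depends only on $n$ and $r_{0}$. The singularity is global because curvature blows up at the tip via the Bryant-soliton rescaling provided by Theorem \ref{maintheoremnospheres} and at infinity because the metric is asymptotic to the fully singular cylinder at time $T$.

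The main obstacle I anticipate is the pointwise decay $-\phi''/\phi \to 0$ when $\phi\to\infty$: the a priori bound $|\phi''|\le K\phi$ coming from bounded curvature alone is not strong enough, so one must genuinely exploit both the integrability $\int^{\infty}(-\phi'')\,ds < \infty$ furnished by concavity and the rigidity of nonnegatively curved rotationally symmetric pointed limits in order to rule out persistent radial curvature along a sequence tending to infinity.
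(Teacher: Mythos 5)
Your overall route is the same as the paper's: observe that nonnegative $K$ forces $\phi_{ss}\le 0$ and hence $0<\phi_{s}\le 1$ (no minimal hyperspheres), split into bounded versus unbounded radius, and feed the two cases into Theorem \ref{maintheoremnospheres} and Theorem \ref{maintheoremnospheresimmortal}. But two steps you leave open are precisely where the content lies. First, the pointwise decay $K\to 0$ in the unbounded case: the "blow-up/compactness argument on $g_{0}$" you gesture at is not needed and would be awkward to run at $t=0$, where you have no derivative bounds on the curvature. The paper's fix is to work at a fixed positive time $t_{0}>0$: since $\phi_{s}(\cdot,t_{0})$ is monotone (as $\phi_{ss}\le 0$) and bounded, it has a limit, so $\phi_{ss}$ and hence $K=-\phi_{ss}/\phi$ are integrable in $s$ away from the origin; Shi's derivative estimates at time $t_{0}$ give $\lvert K_{s}\rvert\le C$, and a nonnegative integrable function with bounded derivative tends to zero. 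That closes the gap with no compactness argument.

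Second, the identification $T=r_{0}^{2}/(2(n-1))$ and the globality of the singularity. The maximum principle applied to $\phi^{2}$ (your comparison with the exact cylinder) only yields the upper barrier $\phi^{2}\le r_{0}^{2}-2(n-1)t$, hence $T\le r_{0}^{2}/(2(n-1))$; it does not give the matching lower bound on the existence time, nor that the curvature blows up everywhere at $T$ (a priori the tip could become singular strictly earlier while the end is still close to a nondegenerate cylinder). The paper obtains both from the cited Chen--Zhu result (Proposition \ref{propchenzhu}): after checking, via the argument of Lemma \ref{controlfinitelimit}(ii), that $g(t)$ remains smoothly asymptotic to the round cylinder for $t>0$, one gets $\inf_{p}R_{g(t)}(p)\ge C/(T-t)$ with $T=r_{0}^{2}/(2(n-1))$, which simultaneously pins down $T$ and shows the singularity is global. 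If you want to avoid citing Chen--Zhu you would need to construct a genuine lower barrier for $\phi^{2}$ and separately rule out an earlier singularity at the tip, neither of which your maximum-principle sketch supplies.
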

Chen and Zhu already proved that any Ricci flow as in Corollary \ref{classificationpositive} develops a \emph{global} singularity at some $T$ only depending on the radius of the cylinder asymptotic to the initial metric \cite[Theorem A.1]{chenzhu} (see also Proposition \ref{propchenzhu}).

The classification of nonnegatively curved rotationally symmetric Ricci flows also leads to a better understanding of standard solutions. In the fundamental paper \cite{perelmansurgery} Perelman introduced the notion of \emph{standard} solutions on $\R^{3}$: they are obtained by evolving metrics constructed by gluing a hemispherical cap region to a round cylinder of scalar curvature one. Standard solutions were used to describe the behaviour of the flow after performing surgery.
In \cite{peng} Lu and Tian generalized Perelman's standard solutions; they considered Ricci flows on $\R^{n+1}$, with $n\geq 2$, starting at some rotationally symmetric metric with nonnegative curvature, sufficiently bounded geometry and asymptotic to a round cylinder (see Definition \ref{standardsolutions}). Chow and Tian have conjectured that any standard solution in the sense of \cite{peng} develops a Type-II singularity modelled on the Bryant soliton once suitably dilated \cite[Conjecture 1.2]{wu}. Wu gave evidence in favour of this conjecture by showing that there exist \emph{some} standard solutions converging to Bryant solitons \cite{wu}. 

Corollary \ref{classificationpositive} provides an affirmative answer to the Chow-Tian conjecture.
\begin{corollary}[Chow-Tian Conjecture]\label{conjecture} Sequences of appropriately scaled standard solutions $\emph{(}$as defined in \cite{peng}$\emph{)}$ with marked origins converge to Bryant solitons in a suitable sense.
\end{corollary}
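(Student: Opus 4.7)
The plan is to deduce Corollary \ref{conjecture} as an almost immediate consequence of Corollary \ref{classificationpositive}. By definition, a standard solution in the sense of \cite{peng} is a Ricci flow on $\R^{n+1}$ starting at a complete rotationally symmetric metric $g_{0}$ with bounded nonnegative curvature that is asymptotic in $C^{0}$ to some round cylinder $\R \times S^{n}(r_{0})$ at infinity. The only hypothesis of Corollary \ref{classificationpositive} not built into this definition is the absence of minimal hyperspheres, so my first task would be to show that this property comes for free from the other assumptions.

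To verify this, I would write $g_{0} = ds^{2} + \phi(s)^{2} g_{S^{n}}$ in polar-type coordinates with $\phi(0) = 0$. A minimal hypersphere at radius $s_{*}$ corresponds to an interior critical point of $\phi$, since the area of the sphere at radius $s$ is proportional to $\phi(s)^{n}$. Nonnegativity of the radial sectional curvatures translates into $\phi'' \le 0$, so $\phi$ is concave on $(0,\infty)$. Combined with $\phi(0)=0$ and the $C^{0}$-asymptotic condition $\phi(s) \to r_{0} > 0$ as $s \to \infty$, concavity forces $\phi$ to be strictly increasing on $(0,\infty)$, and hence to have no interior critical points. Thus $(\R^{n+1}, g_{0})$ contains no minimal hyperspheres.

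With this verification in hand, Corollary \ref{classificationpositive} applies directly and yields that each standard solution develops a global Type-II singularity at some finite $T = T(n,r_{0})$ and converges, after suitable rescaling, to the Bryant soliton in the Cheeger-Gromov sense. This is exactly the content of the Chow-Tian conjecture. The only point that may require an explicit remark is the identification of the ``appropriately scaled'' sequences in \cite{wu} with the parabolic rescaling along points of maximal curvature produced by Theorem \ref{maintheoremnospheres}; this is a routine matching of conventions. Since the entire argument reduces to checking one hypothesis, there is no genuine obstacle to overcome.
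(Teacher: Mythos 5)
Your overall route is the same as the paper's: the corollary is deduced by feeding standard solutions into Corollary \ref{classificationpositive} (the paper records this as Theorem \ref{theoremconjecture}). However, the one piece of work you actually do --- verifying that the \emph{initial} metric of a standard solution contains no minimal hyperspheres --- is both unnecessary and, as written, false. Concavity of $\phi$ together with $\phi(0)=0$ and $\phi\to r_{0}$ does give $\phi_{s}\geq 0$, but it does not give strict monotonicity: if $\phi_{s}(s_{*})=0$ at some interior point, concavity forces $\phi_{s}\equiv 0$ on $[s_{*},\infty)$, i.e.\ an exactly cylindrical end, which is perfectly consistent with all the hypotheses of Definition \ref{definitioninitialstandard}. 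This degenerate case is not exotic: Perelman's original standard solutions are a cap glued to an exact round cylinder, so every hypersphere in the cylindrical region is minimal (mean curvature $n\phi_{s}/\phi=0$) at $t=0$. Your claimed dichotomy therefore excludes precisely the examples the conjecture was formulated for.

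The gap is removable because Corollary \ref{classificationpositive} does not actually hypothesize the absence of minimal hyperspheres; it only asks for bounded nonnegative curvature, which conditions (ii)--(iii) of Definition \ref{definitioninitialstandard} supply. Inside its proof the paper obtains $\mathrm{Ric}_{g(t)}>0$ for all $t\in(0,T)$ from Hamilton's strong maximum principle (the curvature operator is positive somewhere at $t=0$), whence $\phi_{ss}<0$ and then $0<\phi_{s}\leq 1$ by completeness, so minimal hyperspheres are absent for all \emph{positive} times; one then runs the bounded-radius argument from a slightly positive starting time. So either delete your auxiliary verification and invoke Corollary \ref{classificationpositive} as stated, or replace the $t=0$ concavity argument by the strong-maximum-principle argument at $t>0$. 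With that repair the deduction is the paper's.
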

A three dimensional version of this result was proved by Ding in \cite{ding}. 

We point out that the notion of standard solutions discussed in \cite{peng} does not require the curvature to attain a maximum. If instead we also assume a pinching condition where the radial sectional curvature $K_{g_{0}}$ is bounded from above by the spherical sectional curvature $L_{g_{0}}$, then Corollary \ref{classificationpositive} and the recent classification of (rotationally symmetric) $\kappa$-solutions obtained by Brendle in \cite{brendle} and Li and Zhang in \cite{brendle2} guarantee that by blowing up the standard solution at the origin $\origin\in\R^{n+1}$ along \emph{any} time sequence approaching the maximal time one obtains the Bryant soliton in the limit.
\\

\begin{corollary}\label{corollarybrendlino}
Let $(\R^{n+1},g(t))_{0\leq t < T}$ be a standard solution to the Ricci flow $($as defined in \cite{peng}$)$ starting at $g_{0}$. If $K_{g_{0}}\leq L_{g_{0}}$, then for any $t_{j}\nearrow T$ the rescaled standard solutions $(\mathbb{R}^{n+1},g_{j}(t),\origin)$ defined on $[-R_{g(t_{j})}(\origin)t_{j},0]$ by $g_{j}(t)\doteq R_{g(t_{j})}(\origin)g(t_{j}+t/R_{g(t_{j})}(\origin))$ converge to the Bryant Soliton (up to scaling).  
\end{corollary}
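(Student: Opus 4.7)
The plan is to use the pinching $K_{g_{0}}\leq L_{g_{0}}$ to identify $\origin$ as the point of maximum scalar curvature at each time of the flow, and then to classify any subsequential limit of the rescaled flows via the recent Brendle and Li--Zhang classification of rotationally symmetric $\kappa$-solutions.

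First I would verify that the pinching $K\leq L$ between the radial and spherical sectional curvatures of a rotationally symmetric Ricci flow is preserved by the evolution equations for the warping function; this is a standard maximum principle argument on the reaction terms of the sectional curvatures in the rotationally symmetric setting. Since smoothness at $\origin$ forces $K(\origin,t)=L(\origin,t)$ for every $t$, the preserved pinching implies that the scalar curvature attains its global maximum at $\origin$ for each $t\in[0,T)$. Corollary~\ref{classificationpositive} then ensures that $R_{j}\doteq R_{g(t_{j})}(\origin)=\sup_{\R^{n+1}}R_{g(t_{j})}\to\infty$ along any sequence $t_{j}\nearrow T$, so the rescalings $g_{j}(s)=R_{j}\,g(t_{j}+s/R_{j})$ are defined on intervals $[-R_{j}t_{j},0]$ that exhaust $(-\infty,0]$.

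Next, Hamilton's trace Harnack inequality (applicable thanks to the bounded nonnegative curvature operator guaranteed by Corollary~\ref{classificationpositive}) gives that $tR(\origin,t)$ is non-decreasing along the original flow, which together with the fact that $\origin$ is the spatial maximum of $R$ yields a uniform bound $R_{g_{j}}(x,s)\leq C$ on any compact subinterval of $(-\infty,0]$ for $j$ sufficiently large. Perelman's $\kappa$-noncollapsing on all scales for standard solutions (see \cite{peng}) is preserved under parabolic rescaling, so Hamilton's compactness theorem produces a pointed subsequential Cheeger--Gromov limit $(M_{\infty},g_{\infty}(t),\origin_{\infty})$, $t\in(-\infty,0]$, which is a complete rotationally symmetric ancient Ricci flow of bounded nonnegative curvature operator, $\kappa$-non-collapsed, and with $\origin_{\infty}$ a fixed point of the limiting $SO(n+1)$-action.

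By the classification of rotationally symmetric $\kappa$-solutions in \cite{brendle, brendle2}, the only possibilities for $g_{\infty}$ are a shrinking round cylinder and the Bryant soliton, up to scaling; the former is excluded because $\R\times S^{n}$ admits no fixed point of its rotational isometries while $\origin_{\infty}$ is such a fixed point, so $M_{\infty}\cong \R^{n+1}$ and $g_{\infty}$ must be the Bryant soliton. As the same argument applies to every subsequence of $\{g_{j}\}$ and yields the same limit, the full sequence converges to the Bryant soliton in the pointed Cheeger--Gromov sense. The main technical obstacle is precisely the preservation of the pinching $K\leq L$ (and the resulting identification of $\origin$ with the global maximum of $R$ for all $t$): this is the only place where the extra hypothesis on $g_{0}$ enters, and once secured, the remaining steps (Hamilton's Harnack and compactness theorems, Perelman's noncollapsing, and the classification of rotationally symmetric $\kappa$-solutions) are by now fairly standard.
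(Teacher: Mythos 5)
Your overall strategy---preservation of the pinching, identification of $\origin$ as the spatial maximum of $R$, Harnack plus compactness, then classification of the limit---is the same as the paper's, but two steps are not actually established by what you write. The first is the inference ``$K(\origin,t)=L(\origin,t)$ together with $K\leq L$ everywhere implies that $R$ attains its global maximum at $\origin$.'' This is a non sequitur: the pinching compares the two sectional curvatures at each fixed point but says nothing about how $L$ varies from point to point, and $R=n\bigl(2K+(n-1)L\bigr)\leq n(n+1)L$ could a priori be large far from the origin where $L$ is large. The missing ingredient is the spatial monotonicity of $L$: since the lower bound $A=\phi^{2}(L-K)\geq 0$ is preserved (Lemma \ref{lowerboundApersists}) and $\phi_{s}\geq 0$ (standard solutions have no minimal hyperspheres), one computes $\partial_{s}L=-2\phi_{s}A/\phi^{3}\leq 0$, so $L$ is nonincreasing in $s$ and $R\leq n(n+1)L\leq n(n+1)L(\origin)=R_{g(t)}(\origin)$. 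This is precisely Lemma \ref{tipregion}, and it --- not the preservation of the pinching alone --- is the content of the ``main technical obstacle'' you identify.

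The second issue is that both your exclusion of the shrinking cylinder (via the absence of a fixed point for its rotational isometries) and your direct appeal to the classification of \emph{rotationally symmetric} $\kappa$-solutions presuppose that the pointed Cheeger--Gromov limit inherits an effective isometric $SO(n+1)$-action fixing $\origin_{\infty}$. The paper explicitly remarks that it is not clear a priori that singularity models are rotationally symmetric, and it avoids this assumption: the cylinder is ruled out topologically, because the limit is exhausted by open sets diffeomorphic to $\R^{n+1}$ and the cylinder admits no such exhaustion (Lemma \ref{exhaustionlemma} and the proof of Theorem \ref{maintheoremnospheres}), while the remaining case is handled as a \emph{conformally flat} positively curved $\kappa$-solution (Lemma \ref{characterizationsingularitymodels}), with \cite{catino2} supplying rotational symmetry in dimension $n+1>3$ before \cite{brendle2} is invoked. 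If you wish to keep your route you must justify the subconvergence of the isometries to an effective action on the limit with the marked point fixed; otherwise the conformal-flatness argument of the paper closes the gap.
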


According to \cite{exampleneck} the conclusion of Theorem \ref{maintheoremnospheresimmortal} should generally fail if $(\R^{n+1},g_{0})$ contains minimal hyperspheres. In fact, in \cite{woolgar} it was expected that if $g_{0}$ is asymptotically flat and contains minimal embedded hyperspheres forming a neck region which is sufficiently pinched, then the Ricci flow starting at $g_{0}$ develops a Type-I singularity caused by the radius of the neck going to zero in finite-time. Conversely, if the pinching is mild, then the neck should disappear in finite time and the flow should hence be immortal. By extending the analysis in \cite{exampleneck} to $\R^{n+1}$ we are able to confirm such expectation. We consider the Ricci flow evolving from an asymptotically flat metric $g_{0}$ of the form 
\[
g_{0} = \xi_{0}^{2}(x)dx\otimes dx + \phi_{0}^{2}(x)\hat{g},
\]
\noindent where $\hat{g}$ is the constant curvature one metric on $S^{n}$; we say that $g_{0}$ has a \emph{neck region} if $\phi_{0}$ has a local maximum at some radial coordinate $x_{\ast}$ and a local minimum at some radial coordinate $y_{\ast}> x_{\ast}$. Following \cite{precyse} the pinching of the neck is then given by the ratio (difference) between the radii $\phi_{0}(x_{\ast})$ and $\phi_{0}(y_{\ast})$. 

The statement below is a weaker version of our result and we refer to Theorem \ref{asymptoticsscrittobene} for a complete statement containing the cylindrical asymptotics.
\begin{theorem}\label{mainresultneckpinches}
Let $(\R^{n+1},g(t))_{0\leq t < T}$, with $n\geq 2$, be the solution to the Ricci flow evolving from an asymptotically flat rotationally symmetric metric $g_{0}$ containing a neck region $(x_{\ast},y_{\ast})\times S^{n}$. Assume that $\emph{Ric}_{g_{0}} > 0$ on the closed Euclidean ball $B(\origin,x_{\ast})$ and that $R_{g_{0}}\geq 0$ on $\R^{n+1}$. Let $\beta$ be defined as
\[
\beta \doteq \inf_{\R^{n+1}}\phi_{0}^{2}(L_{g_{0}}-K_{g_{0}}),
\]
\noindent and let $r > 0$ satisfy
\[ 
r^{2} > \frac{n+1-2\beta}{n-1} + 1.
\]
\noindent If $\phi_{0}(x_{\ast})\geq r \phi_{0}(y_{\ast})$, then the solution develops a Type-I singularity which is modelled on a family of shrinking cylinders.
\end{theorem}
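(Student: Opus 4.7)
The plan is to adapt the neckpinch analysis of Angenent--Knopf \cite{precyse} from $S^{n+1}$ to the asymptotically flat, non-compact setting on $\R^{n+1}$. Working in the arc-length coordinate $s$, one writes $g(t)=ds^{2}+\phi(s,t)^{2}\hat g$ and sets $\psi\doteq\phi_{s}$; the radial and spherical sectional curvatures are $K=-\phi_{ss}/\phi$ and $L=(1-\psi^{2})/\phi^{2}$, so that $\phi^{2}(L-K)=1-\psi^{2}+\phi\phi_{ss}$. First I would derive the standard evolution equations, notably $\phi_{t}=\phi_{ss}-(n-1)(1-\psi^{2})/\phi$, together with the parabolic equation for $\phi^{2}(L-K)$, whose zeroth-order structure makes its infimum behave like a super-solution of a favorable linear ODE.

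The second step is to propagate the pinching invariant, i.e. to show $\inf_{\R^{n+1}}\phi^{2}(L-K)(\cdot,t)\geq\beta$ for all $t\in[0,T)$. Here the asymptotic flatness of $g_{0}$ is crucial: at spatial infinity $\phi\sim s$ and $\psi\to 1$, so $\phi^{2}(L-K)\to 0$ and the infimum cannot escape through the end, enabling a version of the minimum principle on $\R^{n+1}$. Likewise, $R_{g_{0}}\geq 0$ is preserved by the flow, and $\text{Ric}_{g_{0}}>0$ on the closed cap $\overline{B(\origin,x_{\ast})}$ propagates to the evolving cap region, which in particular guarantees that the local maximum $x_{\ast}(t)$ of $\phi$ persists as a strict upper bump.

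The third step is the onset of the singularity and its localization at the neck. At $x_{\ast}(t)$ one has $\psi=0$ and $\phi_{ss}\leq 0$, so $\phi_{t}\leq -(n-1)/\phi$; integrating yields $\phi(x_{\ast}(t),t)^{2}\leq\phi_{0}(x_{\ast})^{2}-2(n-1)t$, forcing a singularity at some $T\leq\phi_{0}(x_{\ast})^{2}/(2(n-1))$. The quantitative pinching hypothesis $\phi_{0}(x_{\ast})\geq r\phi_{0}(y_{\ast})$ with $r^{2}>(n+1-2\beta)/(n-1)+1$ is the input that drives the ODE-comparison of \cite{precyse}: it ensures that $\phi_{\min}(t)\to 0$ strictly before $\phi(x_{\ast}(t),t)$ can be exhausted, so the singularity is a bona fide neckpinch localized at a neighbourhood of $y_{\ast}(t)$ rather than an extinction of the bump.

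Finally, Type-I control and the cylindrical model follow from $\phi^{2}(L-K)\geq\beta$ combined with the standard curvature estimates for rotationally symmetric flows: these yield $|\text{Rm}|\leq C/\phi^{2}$, and at the neck the identity $\tfrac{d}{dt}\phi^{2}=2\phi^{2}(L-K)-2n(1-\psi^{2})$ (with $\psi=0$ at the local minimum and $\phi^{2}(L-K)\to 1$ on any cylindrical limit) produces $\phi_{\min}^{2}\sim 2(n-1)(T-t)$, hence $\sup|\text{Rm}|(T-t)<\infty$. Parabolic rescaling by $(T-t)^{-1/2}$ centred at the neck then yields, in the Cheeger--Gromov sense, a shrinking round cylinder of radius $\sqrt{2(n-1)}$, giving the claimed asymptotic model. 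The main obstacle is precisely the non-compactness of $\R^{n+1}$: the Angenent--Knopf analysis exploits the two-cap structure of $S^{n+1}$, and replacing one cap with an asymptotically flat end demands careful handling of the minimum principle at spatial infinity and of the uniform preservation of $\beta$ under the flow, which is where the asymptotic flatness hypothesis is indispensable.
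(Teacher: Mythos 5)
Your overall strategy is the one the paper follows: preserve the scale-invariant lower bound $A=\phi^{2}(L-K)\geq\beta$ along the flow, use $R\geq 0$ to force the neck to close at a definite rate, use $A\geq\beta$ to bound the shrinking of the bump from below, and compare the two ODEs so that the neck vanishes strictly before the bump can. The quantitative heart of the theorem, however, is exactly the pair of inequalities $\phi_{\min}^{2}(t)\leq\phi_{\min}^{2}(0)-(n-1)t$ (from $R\geq 0$, which at the neck gives $\phi_{ss}(y_{\ast})\leq(n-1)/(2\phi_{\min})$) and $\phi_{\max}^{2}(t)\geq\phi_{\max}^{2}(0)+2(\beta-n)t$ (from $A\geq\beta$ at the bump), whose combination is what produces the threshold $r^{2}>\frac{n+1-2\beta}{n-1}+1$; deferring this to the ODE comparison of Angenent--Knopf leaves the specific constant unverified, and their argument is set on $S^{n+1}$ with reflection symmetry, so it must actually be rewritten here.

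Two points are genuine gaps rather than omitted detail. First, the cap at the origin: the bound $\lvert\mathrm{Rm}\rvert\leq C/\phi^{2}$ is vacuous near $\origin$, where $\phi\to 0$ by the boundary conditions, so it does not by itself yield $\sup_{\R^{n+1}}\lvert\mathrm{Rm}_{g(t)}\rvert(T-t)<\infty$, nor does it localize the singularity at the neck. One must show that the region $B(\origin,x_{\ast}(t))$ stays smooth as long as $\lim_{t\nearrow T}\phi_{\max}(t)>0$; the paper does this through the preservation of $\phi_{ss}\leq 0$ on the cap (Lemma \ref{lemma1copied}) together with the Angenent--Knopf cap regularity result (Lemma \ref{lemma2copied}). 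Your remark that $\mathrm{Ric}>0$ ``propagates to the cap'' only gives persistence of the bump, not boundedness of the curvature at the origin. Second, your derivation of the rate $\phi_{\min}^{2}\sim 2(n-1)(T-t)$ assumes that $\phi^{2}(L-K)\to 1$ on a cylindrical limit, which is circular: the cylindrical limit is the conclusion. The paper instead obtains the two-sided bound $(n-1)(T-t)\leq\phi_{\min}^{2}(t)\leq 2(n-1)(T-t)$ directly by integrating $\dot{\phi}_{\min}=\phi_{ss}(y_{\ast})-(n-1)/\phi_{\min}$ with the elementary bounds $0\leq\phi_{ss}(y_{\ast})\leq(n-1)/(2\phi_{\min})$, and only then combines this with $\lvert\mathrm{Rm}\rvert\leq C/\phi_{\min}^{2}$ to conclude Type-I.
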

Finally, we show that there exist examples of necks that disappear in finite time along the Ricci flow; the next result follows by combining the adaptation of \cite{exampleneck} to $\R^{n+1}$ and the stability result for the Euclidean metric proved by Schn\"urer, Schulze and Simon in \cite{schulze}.
\begin{proposition}\label{propositionlercia}
There exists $\varepsilon_{0} = \varepsilon_{0}(n) > 0$ such that if $g_{0}$ is an asymptotically flat rotationally symmetric metric which has a neck and is $\varepsilon_{0}$-close to the Euclidean metric on $\R^{n+1}$, then the maximal Ricci flow solution $g(t)$ evolving from $g_{0}$ is immortal and the neck disappears in finite time.
\end{proposition}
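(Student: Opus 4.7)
The plan is to combine the stability result for the Euclidean metric due to Schn\"urer, Schulze and Simon \cite{schulze} with a Sturmian zero-counting argument of the type used by Angenent and Knopf in \cite{exampleneck}.

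First I would choose $\varepsilon_{0} = \varepsilon_{0}(n) > 0$ small enough that the main theorem of \cite{schulze} applies to any asymptotically flat rotationally symmetric $g_{0}$ which is $\varepsilon_{0}$-close to the Euclidean metric. This yields immediately that the maximal Ricci flow solution $g(t)$ is immortal and that $g(t)$ converges to the Euclidean metric as $t\to\infty$ in a suitably strong sense. In particular, writing $g(t) = \xi^{2}(x,t)\,dx\otimes dx + \phi^{2}(x,t)\hat g$, the profile function $\phi(\cdot,t)$ converges in $C^{1}_{loc}$ to the Euclidean profile, whose arclength derivative is identically $1$, and the convergence can be upgraded to a uniform statement in the radial variable using standard Shi-type local derivative estimates together with the preservation of asymptotic flatness along the flow.

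Second, I would adapt the Sturmian analysis of \cite{exampleneck} to the setting of $\R^{n+1}$. After rewriting the rotationally symmetric Ricci flow in the arclength coordinate $s$, the quantity $\phi_{s}$ satisfies a scalar parabolic equation on the half-line $[0,\infty)$ with smooth coefficients on any compact time interval, with boundary condition $\phi_{s}(0,t)=1$ enforced by smoothness at the origin and with asymptotic condition $\phi_{s}(\cdot,t)\to 1$ at spatial infinity preserved by the asymptotic flatness of $g_{0}$. Since $g_{0}$ has a neck $(x_{\ast},y_{\ast})\times S^{n}$, initially $\phi_{s}$ admits at least two spatial zeros (a sign change from $+$ to $-$ at $x_{\ast}$ and from $-$ to $+$ at $y_{\ast}$). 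By Angenent's Sturmian theorem the number of spatial zeros of $\phi_{s}(\cdot,t)$ is a non-increasing, non-negative, integer-valued function of time which drops strictly whenever a multiple zero forms.

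Combining the two ingredients, the uniform $C^{1}$-closeness of $\phi(\cdot,t)$ to the Euclidean profile together with the asymptotic behaviour at infinity forces $\phi_{s}(\cdot,t)$ to be strictly positive everywhere for $t$ large enough; hence the zero count of $\phi_{s}$ vanishes for such times. Since it is monotone and integer-valued, and can only drop through finitely many discrete annihilation events, the last such event must occur at some finite $t^{\ast}<\infty$, at which the local max and local min of $\phi$ coalesce and the neck disappears. The main obstacle I expect is confirming that the convergence provided by \cite{schulze} is strong enough and sufficiently uniform in the radial variable to rule out the critical points of $\phi$ persisting in a neighbourhood of spatial infinity for all times; a secondary technical issue is the degeneracy of the $\phi_{s}$-equation at the origin, which is handled using the smoothness relation $\phi_{s}(0,t)=1$ exactly as in \cite{exampleneck}.
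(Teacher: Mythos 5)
Your first step (immortality via the $\varepsilon_{0}$-closeness hypothesis and \cite{schulze}) is exactly what the paper does. The second step, however, has a genuine gap, and it is precisely the one you flag yourself: to run the Sturmian argument to completion you need $\phi_{s}(\cdot,t)>0$ \emph{everywhere} at some finite time, and you propose to get this from convergence of $g(t)$ to the Euclidean metric. But the stability result of \cite{schulze} preserves the $C^{0}$ sandwich $(1+\varepsilon)^{-1}\delta\leq g(t)\leq(1+\varepsilon)\delta$ and gives decay of derivatives for the Ricci--DeTurck flow; a $C^{0}$ bound on $\phi/x$ does not prevent $\phi_{s}$ from vanishing somewhere, and upgrading to a \emph{globally uniform} $C^{1}$ statement for the actual Ricci flow (through the DeTurck diffeomorphisms, and expressed in the arclength-parametrized profile) is exactly the point that needs proof. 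Without it, the critical points of $\phi(\cdot,t)$ could a priori drift to spatial infinity and persist for all time; note also that the quantitative control $\lvert 1-\phi_{s}^{2}\rvert\leq C\phi^{-\epsilon/2}$ outside a compact set is only established in the paper for flows with $T<\infty$, so you cannot borrow it for free in the immortal case. As written, the conclusion "the neck disappears in finite time" does not follow.

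The paper closes the argument by a much more elementary route that avoids uniform convergence altogether: by Corollary \ref{morsefunction} the critical values of $\phi$ are smooth in $t$ until annihilation, and at the bump the evolution equation \eqref{equationphi} (with $\phi_{s}=0$, $\phi_{ss}\leq 0$ there) gives the differential inequality $\phi_{\text{max}}\dot{\phi}_{\text{max}}\leq -(n-1)$, hence $\phi_{\text{max}}^{2}(t)\leq\phi_{\text{max}}^{2}(0)-2(n-1)t$. If the bump (equivalently the neck) persisted up to time $\phi_{\text{max}}^{2}(0)/2(n-1)$, its radius would reach zero and the curvature would blow up, contradicting the immortality already established via \cite{schulze}. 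So the neck and bump must annihilate before that explicit time. If you want to salvage your approach, you would need to supply the missing uniform estimate $\phi_{s}\to 1$ at spatial infinity uniformly in $t$; the ODE argument above is the cleaner fix.
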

\subsection*{Outline.} In Section 2 we discuss some preliminaries and we prove a few basic estimates. In Section 3 we analyse rotationally invariant Ricci flows on $\R^{n+1}$ with no minimal embedded hyperspheres. We show that by \cite{nodal} no minimal hyperspheres appear along the flow and that the curvature is controlled via lower bounds for the radius $\phi$ as in \cite{exampleneck}. In Section 4 we prove that under the assumptions of Theorem \ref{maintheoremnospheres} the flow develops Type-II singularities; the main ingredients are the characterization of Type I flows in \cite{type1} and the classification of conformally flat shrinkers in \cite{zhang}. The appearance of the Bryant soliton follows from \cite{eternal} and \cite{cao}, or alternatively from the recent classification of (rotationally symmetric) $\kappa$-solutions in \cite{brendle} and \cite{brendle2}. We then show that in the setting of Theorem \ref{maintheoremnospheresimmortal} any singularity model has positive asymptotic volume ratio; according to \cite{pseudolocality}, the latter property implies that any Ricci flow as in Theorem \ref{maintheoremnospheresimmortal} is immortal. Section 5 is devoted to classifying nonnegatively curved rotationally symmetric Ricci flows on $\R^{n+1}$, with focus on studying the singularities of standard solutions. In Section 6 we extend the analysis in \cite{exampleneck} to $\R^{n+1}$ to prove Theorem \ref{mainresultneckpinches} (restated in Theorem \ref{asymptoticsscrittobene}); we also outline how the examples of initial data constructed in \cite{exampleneck} may be modified to provide analogous initial data for which Theorem \ref{mainresultneckpinches} applies. We derive cylindrical asymptotics for the neckpinch following \cite{IKS1}. Finally, using \cite{schulze} we provide examples of initial data with necks that evolve to metrics with no minimal embedded hyperspheres in finite time.

\subsection*{Acknowledgements.} The author would like to thank his advisor Jason Lotay for his mentorship and constant support and for many helpful conversations.
\section{Preliminaries}
Let $n \geq 2$ be an integer. Away from the origin, any rotationally symmetric metric on $\R^{n+1}$ is of the form 
\begin{equation}\label{initialmetricwithxi}
g = \xi^{2}(x)\,dx\otimes dx + \phi^{2}(x)\,\hat{g}
\end{equation}
\noindent where $\hat{g}$ is the standard metric of constant curvature one on $S^{n}$ and $\xi,\phi$ are smooth functions on $(0,+\infty)$. Once we introduce the geometric coordinate $s$ representing the $g$-distance from the origin, it is a general fact that $g$ extends smoothly to the origin if and only if 
\begin{equation}\label{boundaryconditions}
\lim_{s\rightarrow 0} \frac{d^{2k}\phi}{ds^{2k}}(s) = 0, \,\,\,\,\,\,\,\,\,\, \lim_{s\rightarrow 0}\frac{d\phi}{ds}(s) = 1,
\end{equation}
\noindent for any integer $k\geq 0$. From now on we assume that \eqref{boundaryconditions} is satisfied; in particular we can write $g$ as
\begin{equation}\label{rotationallysymmetrictime0}
g = ds\otimes ds + \phi^{2}(s)\,\hat{g}.
\end{equation} 
\noindent 
\noindent In the following we always regard $\phi = \phi(s) = \phi(s(x))$ as a function of the variable $x$ (and of time for solutions to the Ricci flow); the spatial derivative with respect to $s$ is therefore intended to be the vector field 
\begin{equation}\label{changevariable}
\partial_{s} = \frac{1}{\xi(x)}\partial_{x}.
\end{equation}
\noindent We adopt the same notations as in \cite{exampleneck}; for any metric $g$ of the form \eqref{rotationallysymmetrictime0} we denote the sectional curvatures of the 2-planes perpendicular to the fibers $\{x\}\times S^{n}$ and of the 2-planes tangential to these fibers by $K$ and $L$ respectively. From the rotational symmetry it follows that the curvature of $g$ is entirely described by $K$ and $L$ which are given by 
\begin{equation}\label{formulasforKandL}
K = -\frac{\phi_{ss}}{\phi}, \,\,\,\,\,\,\,\,\,\, L = \frac{1 - \phi_{s}^{2}}{\phi^{2}}.
\end{equation} 
\noindent By tracing we get the formulas for the Ricci tensor and the scalar curvature:
\begin{align}\label{Riccitensor}
\text{Ric}_{g} &= -n\frac{\phi_{ss}}{\phi}(ds)^{2} + \left(-\phi\phi_{ss} + (n-1)(1 - \phi_{s}^{2}) \right)\hat{g}, \\  R_{g} &= n\left(-2 \frac{\phi_{ss}}{\phi} + (n-1)\frac{1-\phi_{s}^{2}}{\phi^{2}} \right). \label{scalarcurvature}
\end{align}
\subsection{Derived equations.}
Let $g_{0}$ be a complete rotationally symmetric metric on $\R^{n+1}$ of the form \eqref{rotationallysymmetrictime0}. If $g_{0}$ has bounded curvature then there exists a solution $g(t)$ to the Ricci flow starting at $g_{0}$ \cite{shi}; moreover, this solution is unique in the class of complete solutions with bounded curvature on compact subintervals \cite{uniqueness}. By the Ricci flow diffeomorphism invariance and the uniqueness result in \cite{uniqueness} such solution preserves the rotational symmetry; therefore, we may write $g(t)$ as
\begin{equation}\label{ricciflowsolution}
g(t) = \xi^{2}(x,t)\,dx\otimes dx + \phi^{2}(x,t)\,\hat{g} = ds\otimes ds + \phi^{2}(s,t)\,\hat{g},
\end{equation}
\noindent where $s = s(x,t)$ is the time-dependent $g(t)$-distance from the origin. 
\noindent From \eqref{Riccitensor} we derive the evolution equations for $\xi$ 
\begin{equation}\label{evolutionxi}
\xi_{t} = n\frac{\phi_{ss}}{\phi}\xi
\end{equation}
\noindent and for the radius $\phi$
\begin{equation}\label{equationphi} 
\phi_{t} = \phi_{ss} - (n-1)\frac{1-\phi_{s}^{2}}{\phi}.
\end{equation}
\noindent Since the geometric variable $s$ depends on time, we have a nonvanishing commutator between $\partial_{s}$ and $\partial_{t}$; by \eqref{evolutionxi} we get
\begin{equation}\label{commutatorformula}
\left[\partial_{t},\partial_{s}\right] = \left[\partial_{t},\frac{\partial_{x}}{\xi(x,t)}\right] = -(\text{log}\,\xi)_{t}\partial_{s} = - n\frac{\phi_{ss}}{\phi}\partial_{s}.
\end{equation}
\noindent Using the commutator formula and \eqref{equationphi} we compute the equations for the first derivative of $\phi$
\begin{equation}\label{equationfirstderivative}
(\phi_{s})_{t} = (\phi_{s})_{ss} + \frac{n-2}{\phi}\phi_{s}(\phi_{s})_{s} + (n-1)\frac{1-\phi_{s}^{2}}{\phi^{2}}\phi_{s}
\end{equation}
\noindent and for its second derivative
\begin{equation}\label{equationsecondderivative}
(\phi_{ss})_{t} = (\phi_{ss})_{ss} + (n-2)\frac{\phi_{s}}{\phi}(\phi_{ss})_{s} - 2\frac{\phi_{ss}^{2}}{\phi} -(4n -5)\frac{\phi_{s}^{2}}{\phi^{2}}\phi_{ss} + \frac{n-1}{\phi^{2}}\phi_{ss} -2(n-1)\frac{\phi_{s}^{2}(1-\phi_{s}^{2})}{\phi^{3}}. 
\end{equation}
\noindent Similarly to \cite{exampleneck} we introduce the quantity 
\begin{equation}\label{definitionA}
A = \phi^{2}(L - K) = \phi\phi_{ss} + 1 -\phi_{s}^{2},
\end{equation}
\noindent which is a scale-invariant measure of the difference between the spherical sectional curvature $L$ and the radial sectional curvature $K$. 
From \cite[Lemma 3.1]{exampleneck} it follows that the quantity $A$ evolves by 
\begin{equation}\label{evolutionA}
A_{t} = A_{ss} + (n-4)\frac{\phi_{s}}{\phi}A_{s} -4(n-1)\frac{\phi_{s}^{2}}{\phi^{2}}A.
\end{equation}
\noindent We also write the expression for the Laplacian along the flow: for any smooth radial function $f$ the Laplacian associated with the solution to the Ricci flow at time $t$ is given by 
\begin{equation}\label{formulalaplacian}
\Delta f = f_{ss} + n\frac{\phi_{s}}{\phi}f_{s}.
\end{equation}
\subsection{Basic estimates.} We dedicate the end of this section to proving general bounds for rotationally invariant Ricci flows on $\R^{n+1}$. We first show that we can control the curvature of the Ricci flow solution via lower bounds for the radius $\phi$; the following property is analogous to \cite[Lemma 7.1]{exampleneck}. 
\begin{lemma}\label{crucialestimateinterior}
Let $(\R^{n+1},g(t))_{0\leq t < T}$ be the maximal Ricci flow solution evolving from a complete bounded curvature rotationally symmetric metric $g_{0}$. Let $U\subset \R^{n+1}$ and assume that $\phi^{2}\lvert \emph{Rm}_{g(t)}\rvert_{g(t)}\leq C$ along the parabolic boundary of $U\times [0,T)$ for some $C > 0$. Then $\phi^{2}\lvert \emph{Rm}_{g(t)}\rvert_{g(t)}\leq C^{\prime}$ in $U\times [0,T)$ for some $C^{\prime}\in [C,\infty)$.
\end{lemma}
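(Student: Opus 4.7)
The plan is to apply the weak maximum principle to the two scale-invariant quantities $A=\phi^{2}(L-K)$ and $\phi_{s}$, whose evolution equations are \eqref{evolutionA} and \eqref{equationfirstderivative}. Since $\phi^{2}L=1-\phi_{s}^{2}$ by \eqref{formulasforKandL} and $\phi^{2}K=\phi^{2}L-A$ by \eqref{definitionA}, an $L^{\infty}$ bound on $A$ combined with one on $\phi_{s}$ translates directly into bounds on $\phi^{2}K$ and $\phi^{2}L$, and therefore on $\phi^{2}|\text{Rm}_{g(t)}|_{g(t)}$, which in the rotationally symmetric setting is comparable to $|\phi^{2}K|+|\phi^{2}L|$.

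First, I would bound $A$. The zero-order coefficient $-4(n-1)\phi_{s}^{2}/\phi^{2}$ in \eqref{evolutionA} is nonpositive, so at an interior spacetime maximum of $A$ one has $A_{t}\geq 0$, $A_{s}=0$, $A_{ss}\leq 0$, and \eqref{evolutionA} forces $A_{t}\leq -4(n-1)(\phi_{s}^{2}/\phi^{2})A$. If $A>0$ at that point this contradicts $A_{t}\geq 0$; a symmetric argument rules out interior minima with $A<0$. Hence $\sup_{U\times[0,T)}|A|\leq\sup_{\partial_{p}(U\times[0,T))}|A|$, and the latter is controlled by the hypothesis since $|A|\leq|\phi^{2}L|+|\phi^{2}K|\lesssim C$ on the parabolic boundary.

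Next, I would bound $\phi_{s}$ via \eqref{equationfirstderivative}. The key observation is that the zero-order coefficient $(n-1)(1-\phi_{s}^{2})/\phi^{2}$ changes sign across $|\phi_{s}|=1$. At an interior spacetime maximum with $\phi_{s}>1$ one has $(\phi_{s})_{t}\geq 0$, $(\phi_{s})_{s}=0$, $(\phi_{s})_{ss}\leq 0$, yet \eqref{equationfirstderivative} forces $(\phi_{s})_{t}\leq(n-1)(1-\phi_{s}^{2})\phi_{s}/\phi^{2}<0$, a contradiction; a symmetric argument rules out interior minima with $\phi_{s}<-1$. Thus $|\phi_{s}|\leq\max\{1,\sup_{\partial_{p}(U\times[0,T))}|\phi_{s}|\}$, and since $|1-\phi_{s}^{2}|=|\phi^{2}L|\leq C$ on the parabolic boundary, we obtain $\phi_{s}^{2}\leq 1+C$ throughout $U\times[0,T)$. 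Combined with the bound on $|A|$ this gives the desired estimate on $\phi^{2}|\text{Rm}_{g(t)}|_{g(t)}$.

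The main technical obstacle I anticipate is justifying the weak maximum principle when $U$ is not relatively compact, since the relevant extrema could a priori escape to spatial infinity. I would address this by exhausting $U$ with precompact subdomains $U_{R}$ and carrying out the argument above on $U_{R}\times[0,T-\varepsilon]$; the bounded-curvature hypothesis on $g(t)$ ensures $|A|$ and $|\phi_{s}|$ are locally bounded on $[0,T-\varepsilon]$ so that the extrema on $\partial U_{R}\times[0,T-\varepsilon]$ can be tracked, after which one passes to the limits $R\to\infty$ and $\varepsilon\to 0$.
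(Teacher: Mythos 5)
Your core argument is the same as the paper's: bound $\phi_{s}$ (hence $\phi^{2}L=1-\phi_{s}^{2}$) by the maximum principle applied to \eqref{equationfirstderivative}, bound $A$ by the maximum principle applied to \eqref{evolutionA}, and recover $\phi^{2}K=\phi^{2}L-A$. The pointwise computations at interior extrema are right (for $A$ note that at a positive interior maximum you only get $A_{t}\leq 0$, not $A_{t}<0$, when $\phi_{s}=0$ there; this is the usual harmless issue fixed by perturbing to $A-\varepsilon(1+t)$).

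The one step that does not work as written is your treatment of non-compact $U$. If you exhaust $U$ by precompact $U_{R}$ and apply the maximum principle on $U_{R}\times[0,T-\varepsilon]$, the resulting bound is by the supremum over the parabolic boundary of $U_{R}\times[0,T-\varepsilon]$, which includes the artificial boundary $\partial U_{R}$. The hypothesis of the lemma controls $\phi^{2}\lvert\mathrm{Rm}\rvert$ only on the parabolic boundary of $U$ itself; on $\partial U_{R}$ you only have bounds depending on $R$ and $\varepsilon$ (from Shi's estimates up to time $T-\varepsilon$), so the constant you obtain degenerates as $R\to\infty$ and $\varepsilon\to 0$ and the limit yields nothing uniform. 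What is actually needed is a maximum principle on the non-compact region, which requires a growth control on $A$ and $\phi_{s}$ at spatial infinity: for each $t_{0}<T$ one has $\lvert\phi_{ss}\rvert\leq\alpha(t_{0})\phi$, whence $\phi$, $\phi_{s}$ and $A$ are at most exponential in $s$, and one then runs a barrier argument exactly as in the proof of Lemma \ref{lowerboundApersists} (the paper's own proof of the present lemma is equally terse on this point, but that barrier construction is the mechanism that makes the phrase ``cannot diverge along a sequence of interior maxima'' rigorous). Alternatively, in the concrete applications the region $U$ is either bounded or the quantities are shown to have limits at spatial infinity, which restores a genuine parabolic boundary.
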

\begin{proof}
It suffices to show that $\phi^{2}(\lvert L\rvert + \lvert K \rvert)\leq C^{\prime}$ in $U\times [0,T)$. Since $\phi^{2}L = 1-\phi_{s}^{2}$ is uniformly bounded along the parabolic boundary of $U\times [0,T)$, by the evolution equation \eqref{equationfirstderivative} we deduce that $\phi_{s}$ cannot diverge in $U\times [0,T)$ along a sequence of interior maxima (minima).
\\ We now consider the quantity $A$ defined in \eqref{definitionA}. By assumption $A$ is controlled along the parabolic boundary of $U\times [0,T)$; from \eqref{evolutionA} we get that $A$ is hence bounded in $U\times [0,T)$. We may thus conclude that $\phi^{2}K = \phi^{2}L - A$ is uniformly bounded in $U\times [0,T)$, which completes the proof.
\end{proof}
When the scale-invariant estimate in Lemma \ref{crucialestimateinterior} is satisfied on a given region as long as the solution exists then we can always define a limit (possibly degenerate) radius.
\begin{lemma}\label{existlimit}
Let $(\R^{n+1},g(t))_{0\leq t < T}$ be the maximal Ricci flow solution evolving from a complete bounded curvature rotationally symmetric metric $g_{0}$. Assume that $T < \infty$ and that $\phi^{2}\lvert \emph{Rm}_{g(t)}\rvert_{g(t)}\leq C$ on $U\times [0,T)$, where $U\subset \R^{n+1}$. Then for any $p\in U$ the limit $\lim_{t\nearrow T}\phi(p,t)$ exists finite. 
\end{lemma}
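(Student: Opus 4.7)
The plan is to show that $\partial_t(\phi^{2})$ is uniformly bounded on $U\times [0,T)$ and then exploit the assumption $T<\infty$ to conclude that $\phi^{2}(p,\cdot)$ is Lipschitz in time, which immediately forces it to admit a finite limit at $T$.

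Concretely, I would start from the evolution equation \eqref{equationphi} and rewrite its right-hand side in terms of the sectional curvatures via \eqref{formulasforKandL}. Since $\phi\phi_{ss}=-\phi^{2}K$ and $1-\phi_{s}^{2}=\phi^{2}L$, multiplying \eqref{equationphi} by $2\phi$ gives
\[
(\phi^{2})_{t}=2\phi\phi_{ss}-2(n-1)(1-\phi_{s}^{2})=-2\phi^{2}K-2(n-1)\phi^{2}L.
\]
In the rotationally symmetric setting the full curvature tensor is determined algebraically by $K$ and $L$, so the hypothesis $\phi^{2}|\mathrm{Rm}_{g(t)}|_{g(t)}\leq C$ on $U\times [0,T)$ implies $\phi^{2}|K|+\phi^{2}|L|\leq C'$ on $U\times[0,T)$ for some $C'=C'(n,C)$. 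Combining the two displays yields $|(\phi^{2})_{t}|\leq 2nC'$ pointwise on $U\times [0,T)$.

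To conclude, fix $p\in U$ and interpret the time derivative at the fixed coordinate label $x(p)$, so that integration in $t$ is legitimate despite the nonzero commutator \eqref{commutatorformula}. For any $0\leq t_{1}<t_{2}<T$,
\[
\bigl|\phi^{2}(p,t_{2})-\phi^{2}(p,t_{1})\bigr|\leq 2nC'\,|t_{2}-t_{1}|,
\]
so $t\mapsto \phi^{2}(p,t)$ is Lipschitz on $[0,T)$. Since $T<\infty$, this function extends continuously to $[0,T]$; in particular $\lim_{t\nearrow T}\phi^{2}(p,t)$ exists and is finite, and because $\phi\geq 0$ the limit of $\phi(p,t)$ itself exists and is finite as well.

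There is no real obstacle: the argument is a direct consequence of the curvature hypothesis together with the purely algebraic identities \eqref{formulasforKandL}. The only point requiring mild care is distinguishing the time derivative at fixed $x$ from the one at fixed geometric variable $s$, so that the Lipschitz estimate can be integrated along the time direction at a genuinely fixed point of the manifold.
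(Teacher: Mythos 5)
Your argument is correct and is essentially the paper's own proof: both compute $(\phi^{2})_{t}=-2\phi^{2}K-2(n-1)\phi^{2}L$ from \eqref{equationphi} and \eqref{formulasforKandL}, bound it by the curvature hypothesis, and conclude via Lipschitz continuity in time together with $T<\infty$. Your remark about taking the time derivative at fixed coordinate label $x$ rather than fixed $s$ is a correct (and implicit in the paper) point of care.
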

\begin{proof}
From \eqref{equationphi} we derive that for any $p\in U$ we have
\[
\lvert \partial_{t}(\phi^{2})\rvert (p) = \lvert 2\phi\phi_{ss} -2(n-1)(1-\phi_{s}^{2})\rvert(p) = \lvert-2\phi^{2}K -2(n-1)\phi^{2}L\rvert(p) \leq C.
\]
\noindent Therefore the function $\phi(p,\cdot)$ is Lipschitz in $[0,T)$ and the conclusion follows.
\end{proof}
We finally prove that lower bounds for the scale-invariant quantity $A$ defined in \eqref{definitionA} are preserved along the Ricci flow. In the following $\origin$ denotes the origin of $\R^{n+1}$.
\begin{lemma}\label{lowerboundApersists}
Let $(\R^{n+1},g(t))_{0\leq t < T}$ be the maximal Ricci flow solution evolving from a complete bounded curvature rotationally symmetric metric $g_{0}$. If $A(\cdot,0)\geq -\beta$, for some $\beta \geq 0$, then $A(\cdot,t) \geq -\beta$ for any $t\in [0,T)$.
\end{lemma}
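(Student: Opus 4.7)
The plan is to apply a minimum principle to the evolution equation \eqref{evolutionA}. Set $F \doteq A + \beta$, so $F(\cdot,0) \geq 0$, and the goal is to show $F \geq 0$ on $\R^{n+1}\times [0,T)$. From \eqref{evolutionA} one immediately obtains
\begin{equation*}
F_{t} = F_{ss} + (n-4)\frac{\phi_{s}}{\phi}F_{s} - 4(n-1)\frac{\phi_{s}^{2}}{\phi^{2}}F + 4(n-1)\frac{\phi_{s}^{2}}{\phi^{2}}\beta,
\end{equation*}
so the zero-order coefficient of $F$ has the favourable sign $-4(n-1)\phi_{s}^{2}/\phi^{2}\leq 0$, while the inhomogeneous term $4(n-1)\phi_{s}^{2}\beta/\phi^{2}$ is non-negative. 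At a putative spatial interior minimum point where $F < 0$, one would have $F_{ss}\geq 0$ and $F_{s} = 0$, and so $F_{t}\geq -4(n-1)(\phi_{s}/\phi)^{2}F + 4(n-1)(\phi_{s}/\phi)^{2}\beta > 0$, which prevents $F$ from decreasing below zero.

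Two points require care before concluding. First, the coefficients $\phi_{s}/\phi$ appearing in the equation are singular at the origin $\origin\in \R^{n+1}$; however, the boundary conditions \eqref{boundaryconditions} force $\phi(0,t) = 0$ and $\phi_{s}(0,t) = 1$, so that $A(\origin,t) = \phi\phi_{ss} + 1 - \phi_{s}^{2} = 0$ for every $t$. Thus $F(\origin,t) = \beta \geq 0$ automatically, and, as a smooth rotationally symmetric function on $\R^{n+1}$, $F$ satisfies a non-singular parabolic equation in Cartesian coordinates (the apparent singularities at $\origin$ cancel by the standard Taylor expansion of $\phi$ around the origin). Hence the origin does not obstruct the maximum principle.

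Second, $\R^{n+1}$ is noncompact, so one must ensure that the infimum of $F$ is attained (or approximated) at an interior point. This is handled by the bounded curvature assumption: on any subinterval $[0,T']$ with $T' < T$ the solution $g(t)$ has uniformly bounded curvature (by the short-time existence theory of \cite{shi} and the uniqueness result of \cite{uniqueness} together with the assumption on $g_{0}$), hence $|A|$ is uniformly bounded on $\R^{n+1}\times [0,T']$. One may then invoke the weak maximum principle for bounded solutions of linear parabolic equations on complete manifolds with bounded geometry (alternatively, apply a direct barrier argument by comparing $F$ with $-\varepsilon e^{Ct}$ for $C$ large and letting $\varepsilon\searrow 0$, using that the zero-order coefficient of $F$ is bounded above by zero). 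Running this on every $[0,T']\subset [0,T)$ yields $F\geq 0$ on $[0,T)$.

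The main obstacle is precisely the interplay between the formally singular lower-order coefficients at the origin and the noncompactness at infinity; once one observes that $A$ vanishes identically at $\origin$ and that bounded curvature gives uniform bounds on $A$ on compact time subintervals, the favourable signs in the evolution equation do the rest via a standard maximum principle argument.
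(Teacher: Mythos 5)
Your basic strategy (a minimum principle for $F=A+\beta$ using the favourable signs in \eqref{evolutionA}, combined with $A(\origin,t)=0$ at the origin) is the same as the paper's, and your handling of the origin is fine. However, there is a genuine gap in your treatment of spatial infinity: the claim that bounded curvature on $[0,T']$ makes $\lvert A\rvert$ \emph{uniformly} bounded on $\R^{n+1}\times[0,T']$ is false in general. Bounded curvature only gives $\lvert A\rvert=\phi^{2}\lvert L-K\rvert\leq C\phi^{2}$, and the lemma is stated for arbitrary complete bounded-curvature rotationally symmetric $g_{0}$, for which $\phi$ may grow exponentially in $s$ (think of hyperbolic-like ends); the paper is explicit that one only gets $\lvert A(s,t)\rvert\leq M\exp(\alpha s)$. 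Consequently you cannot invoke the weak maximum principle for \emph{bounded} solutions, and your fallback barrier $-\varepsilon e^{Ct}$ does not help either, since it is spatially constant and therefore cannot prevent the infimum of $F$ from being approached only as $s\to\infty$, where $F$ may tend to $-\infty$ at an exponential rate.

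This is exactly the point where the paper's proof does real work: it constructs the super-exponentially growing space-time barrier $W(s,t)=\varepsilon\exp\bigl(s^{2}/(1-\eta t)+\gamma t\bigr)$, which dominates the exponential growth of $A$ and forces a first negative minimum of $A+\beta+W$ to occur at a finite interior point; verifying that $W$ is a genuine supersolution then requires the auxiliary estimates $\partial_{t}s\geq -Cs$, the boundedness of $s\phi_{s}/\phi$ near the origin, and the lower bound $\phi\geq\delta$ away from the origin (Claim \ref{phiboundedawayfromzero}), followed by sending $\varepsilon\to 0$ and iterating in time. A secondary, minor point: at an interior minimum where $\phi_{s}=0$ your inequality $F_{t}>0$ degenerates to $F_{t}\geq 0$, so strictness must come from elsewhere (in the paper, from the $\gamma t$ term in $W$). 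To repair your argument you would need either to supply a comparable growing barrier or to restrict to initial data for which $A$ is genuinely bounded.
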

\begin{proof}
We first check that the radius $\phi$ has a positive lower bound away from the origin.
\begin{claim}\label{phiboundedawayfromzero}
For any $x_{0} > 0$ and $t_{0} < T$ there exists $\delta(x_{0},t_{0}) > 0$ such that $\phi \geq \delta$ in $(\R^{n+1}\setminus B(\origin,x_{0}))\times [0,t_{0}]$. 
\end{claim}
\begin{proof}[Proof of Claim \ref{phiboundedawayfromzero}]
Let $\alpha_{0} \doteq \sup\lvert \text{Rm}_{g_{0}}\rvert_{g_{0}}$. Given $x_{0} > 0$, if $\phi^{2}(x,0) \leq (2\alpha_{0})^{-1}$ for all $x\geq x_{0}$ then we have
\[
\lvert 1 - \phi_{s}^{2}\rvert(x,0) \leq \alpha_{0}\phi^{2}(x,0)\leq \frac{1}{2},
\]
\noindent which then implies that $\phi(x,0)\rightarrow \infty$, but that is not possible. Therefore, we deduce that there exists a sequence $p_{j}\rightarrow \infty$ such that $\phi(p_{j},0) > (2\alpha_{0})^{-1}$. Assume for a contradiction that there exists a sequence $q_{j}\rightarrow \infty$ such that $\phi(q_{j},0) \leq (2\alpha_{0})^{-1}$. After reordering the sequences, we derive that there exists a sequence of local minima $\tilde{q_{j}}\rightarrow \infty$ such that $\phi(\tilde{q_{j}},0) \leq (2\alpha_{0})^{-1}$; it follows that
\[
\lvert 1 - \phi_{s}^{2} \rvert (\tilde{q_{j}},0) \equiv 1 \leq \alpha_{0}\phi^{2}(\tilde{q_{j}},0) \leq \frac{1}{2}.
\]  
\noindent We conclude that $\phi(x,0) \geq \delta > 0$ for some $\delta > 0$. Finally, we note that given $t_{0} < T$ standard distortion estimates of the curvature imply that $\phi$ has a time-dependent positive lower bound as claimed.
\end{proof}
For any $t < T$ there exists $\alpha(t) > 0$ such that $\lvert \phi_{ss} \rvert \leq \alpha \phi$; thus $\phi$ and $\phi_{ss}$ are exponentially bounded, which implies that $\phi_{s}$ is exponentially bounded. In particular, given $t_{0} < T$ there exist $M = M(t_{0})$ and $\alpha =\alpha(t_{0})$ such that $\lvert A(s,t)\rvert \leq M \exp(\alpha s)$ for any $t\in [0,t_{0}]$. Let $\varepsilon$, $\eta$ and $\gamma$ be positive constants to be chosen below; we define the lower barrier
\begin{equation}\label{definitionW}
(s,t)\mapsto W(s,t) \doteq \varepsilon \exp\left( \frac{s^{2}}{1- \eta t} + \gamma t \right).
\end{equation}
\noindent Using \eqref{evolutionA} we can write the evolution equation of $\hat{A}\doteq A + \beta + W$ for $0\leq t \leq 1/2\eta$ as
\begin{align*}
\hat{A}_{t} &\geq (\hat{A})_{ss} + (n-4)\frac{\phi_{s}}{\phi}(\hat{A})_{s} -4(n-1)\frac{\phi_{s}^{2}}{\phi^{2}}\hat{A} \\ &+ \frac{W}{(1-\eta t)^{2}}\left( s^{2}(\eta - 4) + \gamma(1-\eta t)^{2} + (1-\eta t)(2s\partial_{t}s -2 -(n-4)\frac{\phi_{s}}{\phi}2s) \right).
\end{align*}
\noindent By distortion estimates of the distance function there exists $C = C(t_{0})$ such that $\partial_{t} s \geq - Cs$ in $\R^{n+1}\times [0,\min\{t_{0},(2\eta)^{-1}\}]$. From the boundary conditions we derive that there exists a neighbourhood of the origin where $s\phi_{s}/\phi$ is uniformly bounded. By the boundedness of the curvature we get $\phi_{s}^{2} \leq 1 + C(t_{0}) \phi^{2}$ in $\R^{n+1}\times [0,\min\{t_{0},(2\eta)^{-1}\}]$; thus, by Claim \ref{phiboundedawayfromzero} we deduce that away from the origin the following estimate is satisfied 
\[
\left(\frac{\phi_{s}}{\phi}\right)^{2}\leq C(t_{0}) + \frac{1}{\phi^{2}}\leq C(t_{0}) + \frac{1}{\delta^{2}}.
\] Therefore $s\phi_{s}/\phi\leq C(t_{0})(1 + s)$ in $\R^{n+1}\times [0,\min\{t_{0},(2\eta)^{-1}\}]$. We conclude that we can always pick $\eta = \eta(t_{0})$ and $\gamma = \gamma(t_{0})$ such that in $\R^{n+1}\times [0,\min\{t_{0},(2\eta)^{-1}\}]$ we have
\[
\hat{A}_{t} > (\hat{A})_{ss} + (n-4)\frac{\phi_{s}}{\phi}(\hat{A})_{s} -4(n-1)\frac{\phi_{s}^{2}}{\phi^{2}}\hat{A}.
\]
\noindent Since $A$ is exponentially bounded and $A(\origin,t) = 0$ we see that if $\hat{A} < 0$ somewhere in $\R^{n+1}\times [0,\min\{t_{0},(2\eta)^{-1}\}]$, then there exist $z > 0$ sufficiently small, $\bar{x}$ and $\bar{t}$ such that $\hat{A}(\cdot,\bar{t})$ has a negative minimum at $\bar{x}$ where $\hat{A}(\bar{x},\bar{t}) = - z$ for the first time; however, we have shown that $\hat{A}_{t}(\bar{x},\bar{t}) > 0$, which gives a contradiction. We then obtain that $A(\cdot,t) \geq -\beta$ for any $t\in [0,\min\{t_{0},(2\eta)^{-1}\}]$ once we let $\varepsilon$ in \eqref{definitionW} go to zero. We may finally iterate the step and conclude that $A$ remains bounded from below by $-\beta$ in $\R^{n+1}\times [0,t_{0}]$; since $t_{0} < T$ was arbitrary, the proof is complete.
\end{proof}
\section{Analysis of Ricci flow with no minimal hyperspheres}
We consider the maximal Ricci flow solution $(\R^{n+1},g(t))_{0\leq t < T}$ evolving from a complete bounded curvature rotationally symmetric metric $g_{0}$ such that $(\R^{n+1},g_{0})$ does not contain minimal embedded hyperspheres. We first check that the last condition persists in time; namely, we show that minimal hyperspheres cannot appear along the Ricci flow solution if none existed at the initial time. Since the hypersphere of radius $x$ is minimal in $(\R^{n+1},g(t))$ when $\phi_{s}(x,t) = 0$, similarly to \cite{exampleneck} we derive the control on the formation of minimal hyperspheres from applying the Sturmian theorem to the evolution equation of $\phi_{s}$. In the following estimates $C$ always denotes a uniform constant that may change from line to line while $\origin$ denotes the origin of $\R^{n+1}$.
\begin{lemma}\label{nominimalspheres}
Let $(\R^{n+1},g(t))_{0\leq t < T}$ be the Ricci flow solution evolving from a complete bounded curvature rotationally symmetric metric $g_{0}$. If $\phi_{s}(\cdot,0) > 0$ then $\phi_{s}(\cdot,t) > 0$ for any $t\in[0,T)$.
\end{lemma}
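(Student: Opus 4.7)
The plan is to view the evolution equation \eqref{equationfirstderivative} as a linear parabolic PDE for $u := \phi_{s}$, namely
\[
u_{t} = u_{ss} + \frac{(n-2)u}{\phi}\, u_{s} + \frac{(n-1)(1 - u^{2})}{\phi^{2}}\, u,
\]
with coefficients determined by the (already given) flow, and apply a strong parabolic minimum principle, using that the smoothness conditions \eqref{boundaryconditions} pin $u(\origin,t) = 1$ for every $t\in[0,T)$.

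Arguing by contradiction, I would suppose there is a first time $t_{0}\in(0,T)$ and a point $x_{0} > 0$ at which $u(x_{0},t_{0}) = 0$ while $u(\cdot,t) > 0$ on $[0,\infty)$ for every $t\in[0,t_{0})$. At such an interior first zero, $u$ attains a spatial minimum of $0$, giving $u_{s}(x_{0},t_{0}) = 0$, $u_{ss}(x_{0},t_{0}) \geq 0$, and $u_{t}(x_{0},t_{0}) \leq 0$; substituting $u = u_{s} = 0$ into the evolution equation kills the lower-order terms and leaves $u_{t} = u_{ss}$ at this point, forcing both to vanish. Picking $0 < x_{1} < x_{0} < x_{2} < \infty$, Claim \ref{phiboundedawayfromzero} gives $\phi \geq \delta > 0$ on the compact cylinder $[x_{1},x_{2}]\times[0,t_{0}]$, and the bounded curvature hypothesis together with $\phi^{2}L = 1 - u^{2}$ keeps $u$ bounded there, so the coefficients of the linearized equation are bounded and the equation is uniformly parabolic. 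The strong parabolic minimum principle then applies: since $u \geq 0$ on this cylinder with $u(x_{0},t_{0}) = 0$ at an interior point, $u \equiv 0$ on $[x_{1},x_{2}]\times[0,t_{0}]$, contradicting the hypothesis $\phi_{s}(x_{1},0) > 0$.

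The main obstacle is justifying the existence of a \emph{first} interior zero on the noncompact manifold $\R^{n+1}$, since a priori $u$ could approach zero only as $x\to\infty$ without ever vanishing at a finite point. I would handle this by exhausting $\R^{n+1}$ with balls $B(\origin,R)$ and running the above argument on each $[0,R]$; the continuity of $u$ on $[0,\infty)\times[0,T)$, together with $u(\origin,t) = 1$, ensures that any finite-point failure of positivity must first produce a spatial zero at some interior $x_{0}\in(0,R)$, at which the contradiction above applies. Letting $R\nearrow\infty$ gives $\phi_{s} > 0$ throughout $\R^{n+1}\times[0,T)$. This argument is in the same spirit as the Sturmian zero-counting approach employed in \cite{exampleneck}, which could equally well replace the strong minimum principle step.
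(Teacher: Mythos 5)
Your local argument is sound and close in spirit to the paper's: the paper also rewrites \eqref{equationfirstderivative} as $(\phi_{s})_{t}=\Delta\phi_{s}+\tfrac{R_{g(t)}}{n}\phi_{s}$, gets nonnegativity from a maximum principle, and then upgrades to strict positivity via the strong maximum principle exactly as you do (an interior zero with $u\geq 0$ forces $u\equiv 0$ backward in time, contradicting \eqref{boundaryconditions}). The coefficient bounds on a compact cylinder away from the origin via Claim \ref{phiboundedawayfromzero} and bounded curvature are also fine.

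The genuine gap is the step you yourself flag: the control at spatial infinity. Exhausting by balls does not produce a first \emph{interior} zero. For each $R$, the first time $\tau_{R}$ at which $u\leq 0$ somewhere in $[0,R]$ may be achieved only at the artificial boundary $x=R$; enlarging $R$ can push both the location to infinity and $\tau_{R}$ downward, so the argument never localizes. Equivalently, $\inf_{x}u(x,t)$ can become negative without the infimum being attained at any finite point at a ``first'' time, and continuity plus $u(\origin,t)=1$ only yields a zero at time $t_{0}$ itself, with no guarantee that $u\geq 0$ on a backward parabolic neighbourhood of it --- which is what the strong minimum principle requires. This is not a technicality: for linear parabolic equations on noncompact domains, preservation of positivity is \emph{false} without a growth hypothesis (Tychonoff-type nonuniqueness for the heat equation on $\R$), and here $u=\phi_{s}$ is only bounded by $\sqrt{1+C\phi^{2}}$, so it may grow (up to exponentially) in $s$. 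The paper closes exactly this hole: it first establishes the a priori bound $\lvert\phi_{s}(p,t)\rvert\leq\exp(C(d_{g(t)}(\origin,p)+1))$ (from the proof of Lemma \ref{lowerboundApersists}) and then invokes a maximum principle valid on complete noncompact manifolds for solutions with at most exponential growth --- concretely, one can run your first-zero argument against barriers of the form $\varepsilon\exp\bigl(s^{2}/(1-\eta t)+\gamma t\bigr)$ as in \eqref{definitionW}, which dominate $u$ near infinity and force any first touching point to occur at a finite radius. The same caveat applies to your closing remark: the Sturmian theorem of \cite{nodal} also presupposes control of the solution on the lateral boundary of a compact space-time region, so it does not by itself dispose of the behaviour at infinity. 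Adding the growth bound and the corresponding noncompact maximum principle would complete your proof.
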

\begin{proof}
Using \eqref{scalarcurvature} and \eqref{formulalaplacian} we can write the evolution equation of $\phi_{s}$ as 
\[
(\phi_{s})_{t} = \Delta \phi_{s} + \frac{R_{g(t)}}{n}\phi_{s}.
\]
\noindent Assume for a contradiction that $\phi_{s}(p_{0},t_{0}) < 0$ at some space-time point; by \cite{shi} given $T^{\prime}\in (t_{0},T)$ there exists $C = C(T^{\prime})$ such that $\lvert R_{g(t)}(\cdot)\rvert \leq n C$ in $\R^{n+1}\times [0,T^{\prime}]$. Therefore, at any space-time point in $\R^{n+1}\times [0,T^{\prime}]$ where $\phi_{s}$ is negative the time derivative $(\phi_{s})_{t}$ satisfies
\begin{equation}\label{maxprinciplephis}
(\phi_{s})_{t} \geq \Delta \phi_{s} + C\phi_{s}.
\end{equation}
\noindent We have already seen in the proof of Lemma \ref{lowerboundApersists} that $\lvert \phi_{s}(p,t)\rvert \leq \text{exp}(C(d_{g(t)}(\origin,p) + 1))$ for any $(p,t)\in\R^{n+1}\times [0,T^{\prime}]$.  
We can apply the maximum principle to \eqref{maxprinciplephis} and conclude that $\phi_{s}(\cdot,t)\geq 0$ in $\R^{n+1}\times [0,T)$ because $T^{\prime} < T$ was arbitrary.
\\ In fact, the inequality is strict for all positive times. Indeed, if there exist $p_{0}\in\R^{n+1}$ and $t_{0} > 0$ such that $\phi_{s}(p_{0},t_{0}) = 0$, then by the previous derivations we see that $p_{0}$ must be a minimum point for $\phi_{s}(\cdot,t_{0})$: the strong maximum principle implies that $\phi_{s}$ must vanish in the space-time region $\R^{n+1}\times [0,t_{0}]$, thus violating the boundary conditions \eqref{boundaryconditions}.
\end{proof}
Next we need to control the curvature of the Ricci flow solution only in terms of lower bounds for $\phi$. For if the latter condition holds, then by \eqref{boundaryconditions} and Lemma \ref{nominimalspheres} any solution with a nonempty singular set must in particular become singular around the origin; this geometric property guarantees that there always exist singularity models that are not shrinking cylinders. Thus in the following we identify under which assumptions on the behaviour of $g_{0}$ at spatial infinity the evolving solution $g(t)$ satisfies the estimate
\begin{equation}\label{crucialestimate}
\phi^{2}\lvert \text{Rm}_{g(t)}\rvert_{g(t)}\leq C,
\end{equation}
\noindent for some uniform constant $C = C(g_{0})$. The strategy consists in proving that such bound holds outside a sufficiently large ball and then using Lemma \ref{crucialestimateinterior} to deduce that the same control must extend to the ball. 
\\Let $g_{0}$ be a complete bounded curvature rotationally symmetric metric without minimal hyperspheres; the radius $\phi(\cdot,0)$ is hence increasing and admits a limit at infinity.
\subsection{Ricci flow with bounded radius.} When the limit $\lim_{x\rightarrow \infty}\phi(x,0)$ is finite, the initial metric is asymptotic in $C^{0}$ to a round cylinder; in this case the solution to the Ricci flow is controlled by a shrinking cylinder at infinity.
\\
\begin{lemma}\label{controlfinitelimit}
Let $(\R^{n+1},g(t))_{0\leq t < T}$ be the maximal Ricci flow solution evolving from a complete bounded curvature rotationally symmetric metric $g_{0}$ without minimal embedded hyperspheres. If $g_{0}$ is asymptotic in $C^{0}$ to the round cylinder of radius $r$ at infinity, then 
\begin{itemize}
\item[(i)] The solution becomes singular at a finite time satisfying $2T(n-1) \leq r^{2}$.
\item[(ii)] There exists $C > 0$ such that $\phi^{2}\lvert \emph{Rm}_{g(t)}\rvert_{g(t)}\leq C$ in $\R^{n+1}\times [\frac{T}{2},T)$.
\end{itemize} 
\end{lemma}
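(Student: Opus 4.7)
\emph{Strategy.} The plan is to compare the evolving metric to a shrinking round cylinder of initial radius $r$. For (i), I would derive a heat-type differential inequality for $\phi^{2}$ and apply a maximum principle to obtain the pointwise bound $\phi^{2}(x,t)\leq r^{2}-2(n-1)t$, which then forces $T\leq r^{2}/(2(n-1))$. For (ii), this uniform $C^{0}$ control on $\phi$, together with Shi's estimates at time $T/2$, lets me invoke Lemma~\ref{crucialestimateinterior} with $U=\R^{n+1}$.

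\emph{Proof of (i).} Setting $\psi:=\phi^{2}$ and using the evolution equation \eqref{equationphi} together with the Laplacian formula \eqref{formulalaplacian} (and the identities $\psi_{s}=2\phi\phi_{s}$, $\Delta\psi=2\phi\phi_{ss}+2(n+1)\phi_{s}^{2}$), a direct calculation yields
\[
\psi_{t}=\Delta\psi-4\phi_{s}^{2}-2(n-1)\leq \Delta\psi-2(n-1).
\]
Thus $w:=\phi^{2}+2(n-1)t$ is a subsolution of the heat equation, $w_{t}\leq \Delta w$. By Lemma~\ref{nominimalspheres}, $\phi_{s}$ remains strictly positive, so $\phi(\cdot,t)$ is increasing in the radial coordinate; combined with the $C^{0}$ asymptotic to the cylinder of radius $r$ at $t=0$, this gives $\phi(\cdot,0)\leq r$, whence $w(\cdot,0)\leq r^{2}$. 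A maximum principle argument on the complete noncompact manifold then yields $w\leq r^{2}$ throughout, i.e.,
\[
\phi^{2}(x,t)\leq r^{2}-2(n-1)t \qquad \text{on } \R^{n+1}\times[0,T).
\]
Since $\phi$ must remain strictly positive away from the origin as long as the flow is smooth, we must have $T\leq r^{2}/(2(n-1))$, as claimed.

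\emph{Proof of (ii).} Part (i) yields the uniform bound $\phi^{2}\leq r^{2}$ on $\R^{n+1}\times[0,T)$. By Shi's estimates the curvature of $g(T/2)$ is uniformly bounded on $\R^{n+1}$, hence $\phi^{2}\lvert\text{Rm}_{g(T/2)}\rvert_{g(T/2)}$ is uniformly bounded on the slice $\R^{n+1}\times\{T/2\}$. Applying Lemma~\ref{crucialestimateinterior} (more precisely, its time-shifted version) with $U=\R^{n+1}$---whose spatial boundary is empty, so that the parabolic boundary of $U\times[T/2,T)$ reduces to this initial slice---then delivers the desired scale-invariant curvature bound on $\R^{n+1}\times[T/2,T)$.

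\emph{Main obstacle.} The delicate step is the maximum principle in part~(i) on the noncompact manifold $(\R^{n+1},g(t))$: the standard compactly-supported principle does not apply directly. I would proceed either by invoking a Karp-type result for complete manifolds of bounded curvature---noting that the bound $|L|\leq C(t_{1})$ on $[0,t_{1}]\subset [0,T)$ forces $\phi_{s}^{2}\leq 1+C\phi^{2}$ and hence at most exponential growth of $\phi$ in the intrinsic distance---or by an explicit barrier argument analogous to the one in the proof of Lemma~\ref{lowerboundApersists}: add a term $\varepsilon\exp(s^{2}/(1-\eta t)+\gamma t)$ to $w$, upgrade the differential inequality to a strict one, apply the classical maximum principle at a first interior contact, and finally send $\varepsilon\to 0$. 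Once the comparison $\phi^{2}\leq r^{2}-2(n-1)t$ is in hand, everything else is routine.
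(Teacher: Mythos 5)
Your part (i) is essentially the paper's argument: the same identity $\phi^{2}_{t}=\Delta\phi^{2}-4\phi_{s}^{2}-2(n-1)$, the same comparison $\phi^{2}(x,t)\leq r^{2}-2(n-1)t$, and the same conclusion $2T(n-1)\leq r^{2}$; your extra care about the noncompact maximum principle (exponential growth control on $\phi$ plus a barrier as in Lemma~\ref{lowerboundApersists}) is a legitimate way to make precise what the paper calls a ``standard application.''

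Part (ii), however, has a genuine gap. You assert that since $\R^{n+1}$ has empty topological boundary, the parabolic boundary of $\R^{n+1}\times[T/2,T)$ reduces to the initial slice, so that a curvature bound at $t=T/2$ suffices to invoke Lemma~\ref{crucialestimateinterior}. But the mechanism of that lemma is a maximum principle for $\phi_{s}$ (via \eqref{equationfirstderivative}) and for $A$ (via \eqref{evolutionA}), and on a noncompact spatial domain such an argument requires control of these quantities at spatial infinity uniformly in $t\in[T/2,T)$ — not merely on the initial slice. Without it, $\phi_{s}$ or $A$ could diverge along a sequence of points escaping to infinity without ever attaining an interior maximum, and no contradiction is reached; Shi's estimates give only time-dependent bounds that degenerate as $t\nearrow T$, so they cannot substitute for this. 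This is exactly the content of the paper's proof of (ii): using the bound from (i) and Lemma~\ref{nominimalspheres}, $\phi(\cdot,t)$ has a finite positive limit at infinity, hence $\phi_{s}$ is integrable in $s$; the curvature bound at time $t$ gives $\lvert\phi_{ss}\rvert\leq C(t)$, forcing $\phi_{s}\to 0$ at infinity, and Shi's derivative estimates bound $\phi_{sss}$, forcing $\phi_{ss}\to 0$ at infinity. Only then is $\phi^{2}\lvert\mathrm{Rm}\rvert$ controlled at both the origin and spatial infinity on every time slice, which is the hypothesis Lemma~\ref{crucialestimateinterior} actually needs here. You should supply this analysis at spatial infinity rather than dismissing it as an empty boundary condition.
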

\begin{proof} The evolution equation of $\phi^{2}$ is given by
\[
\phi^{2}_{t} = \Delta\phi^{2} -4\phi_{s}^{2} -2(n-1).
\]
\noindent By a standard application of the maximum principle we find $\phi^{2}(x,t)\leq r^{2} - 2(n-1)t$ as long as the solution exists, which then implies (i).
\\In order to prove (ii) we consider $t\in [T/2,T)$. From the estimate above and Lemma \ref{nominimalspheres} we derive that $\phi(x,t)$ admits a positive finite limit as $x\rightarrow\infty$, which also implies that $\phi_{s}$ is integrable in $(0,\infty)$, once we regard $\phi_{s}$ as a function of $s$. Since the curvature is bounded at time $t$ by some constant $C$ we deduce that $\lvert \phi_{ss}\rvert = \lvert K \rvert \phi\leq C \phi \leq C$; thus we find $\phi_{s}(x,t)\rightarrow 0$ at infinity. Therefore $\phi_{s}$ is uniformly controlled at the origin and at spatial infinity in $[T/2,T)$; the same argument in Lemma \ref{crucialestimateinterior} shows that $\lvert\phi_{s}\rvert\leq C$ in $\R^{n+1}\times [T/2,T)$.
\\Similarly, by Shi's derivative estimates, the Koszul formula and the uniform bound on $\phi_{s}$ we obtain 
\[
 \lvert\phi_{sss}\rvert \leq \phi\lvert K_{s}\rvert + \lvert K\rvert \lvert\phi_{s}\rvert\leq C(\lvert\nabla \text{Rm}_{g(t)}\rvert + C \leq C,
 \]
\noindent which then implies that $\phi_{ss}(x,t)\rightarrow 0$ as $x\rightarrow \infty$, being the integral of $\phi_{ss}(\cdot,t)$ on $(0,\infty)$ convergent. We conclude that $\phi^{2}\lvert \text{Rm}_{g(t)}\rvert_{g(t)}$ is uniformly controlled at the origin and at spatial infinity for any $t\in [T/2,T)$; we may then apply Lemma \ref{crucialestimateinterior}.  
\end{proof} 
We finally consider a subclass of solutions with bounded radius defined by requiring $g_{0}$ to further satisfy the following scale-invariant pinching condition:
\begin{equation}\label{pinchingcondition}
A(\cdot,0) \equiv \phi^{2}(L-K)(\cdot,0) = (\phi_{ss}\phi + 1 - \phi_{s}^{2})(\cdot,0) \geq 0.
\end{equation}
\noindent The constraint \eqref{pinchingcondition} implies that $g_{0}$ has a tip located at the origin; moreover, this tip persists along the solution evolving from $g_{0}$.
\begin{lemma}\label{tipregion}
Let $(\R^{n+1},g(t))_{0\leq t < T}$ be the maximal Ricci flow solution evolving from a complete bounded curvature rotationally symmetric metric $g_{0}$ without minimal embedded hyperspheres. Assume that $A(\cdot,0)\geq 0$ and that $g_{0}$ is asymptotic in $C^{0}$ to a round cylinder at infinity. Then for any $p\in\R^{n+1}$ and $t\in [0,T)$ the following holds
\[
R_{g(t)}(\origin)\geq R_{g(t)}(p).
\]
\end{lemma}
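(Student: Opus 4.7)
The plan is to combine the preservation of the pinching $A \geq 0$ with a simple spatial monotonicity of $L$, and then exploit the fact that smoothness at the origin forces $K = L$ there, so that the scalar curvature is controlled by the spherical sectional curvature alone.

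First, I would invoke Lemma \ref{lowerboundApersists} applied with $\beta = 0$: the hypothesis $A(\cdot, 0) \geq 0$ yields $A(\cdot, t) \geq 0$ on $\R^{n+1} \times [0, T)$. Equivalently, $K \leq L$ pointwise along the flow. Next, since $g_{0}$ has no minimal hyperspheres and $\phi_{s}(\origin, 0) = 1$ by \eqref{boundaryconditions}, continuity forces $\phi_{s}(\cdot, 0) > 0$, and Lemma \ref{nominimalspheres} then gives $\phi_{s}(\cdot, t) > 0$ on $\R^{n+1} \times [0, T)$.

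The crucial computation is to differentiate $L = (1 - \phi_{s}^{2})/\phi^{2}$ in $s$: using $A = \phi\phi_{ss} + 1 - \phi_{s}^{2}$ one obtains, by direct calculation,
\[
L_{s} \,=\, -\frac{2\phi_{s}\phi_{ss}}{\phi^{2}} - \frac{2(1 - \phi_{s}^{2})\phi_{s}}{\phi^{3}} \,=\, -\frac{2\phi_{s}}{\phi^{3}}\,A.
\]
Combined with the two facts established above ($A \geq 0$ and $\phi_{s} > 0$), this shows $L_{s} \leq 0$ on $(\R^{n+1} \setminus \{\origin\}) \times [0, T)$. Hence for every $t$ the function $s \mapsto L(s, t)$ is non-increasing in the $g(t)$-geodesic coordinate $s$, so $L(p, t) \leq L(\origin, t)$ for all $p \in \R^{n+1}$.

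To finish, I combine this with the pointwise bound $K \leq L$ in \eqref{scalarcurvature}:
\[
R_{g(t)}(p) \,=\, 2nK(p,t) + n(n-1)L(p, t) \,\leq\, n(n+1)\,L(p, t) \,\leq\, n(n+1)\,L(\origin, t).
\]
On the other hand, the smoothness of $g(t)$ at the origin — expressed by the expansion $\phi(s, t) = s + O(s^{3})$ implied by \eqref{boundaryconditions} — gives $K(\origin, t) = L(\origin, t)$, and hence $R_{g(t)}(\origin) = n(n+1)\,L(\origin, t)$. The two displays together yield $R_{g(t)}(\origin) \geq R_{g(t)}(p)$ as required. I do not expect a genuine obstacle here: the only step that demands care is justifying the one-sided spatial derivative of $L$ at the origin and verifying $K(\origin) = L(\origin)$ via the boundary conditions, both of which are purely local and follow from \eqref{boundaryconditions}.
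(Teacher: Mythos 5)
Your proposal is correct and follows essentially the same route as the paper: both rest on the preservation of $A\geq 0$ via Lemma \ref{lowerboundApersists}, the identity $L_{s} = -2\phi_{s}A/\phi^{3}\leq 0$ giving spatial monotonicity of $L$, the bound $R\leq n(n+1)L$ from $K\leq L$, and the fact that $R(\origin)=n(n+1)L(\origin)$ (which the paper obtains by l'H\^{o}pital via $\phi_{sss}(\origin,t)$, equivalent to your observation that $K=L$ at the origin).
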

\begin{proof} 
By Lemma \ref{lowerboundApersists} we know that $A(\cdot,t)\geq 0$ along the flow. For any $x > 0$ and for any $t\in [0,T)$ we have
\[
\partial_{s}\left(\frac{1-\phi_{s}^{2}}{\phi^{2}}\right)(x,t) = -2\frac{\phi_{s}}{\phi^{3}}A(x,t) \leq 0,
\]
\noindent where we have also used that $\phi_{s}(\cdot,t)\geq 0$. From \eqref{boundaryconditions} we derive that $\phi_{sss}(\origin,t)$ exists finite for any $t\in[0,T)$; we may thus apply l'H\^{o}pital's rule to find that
\begin{align*}
R_{g(t)}(\origin) &= n(n+1)(-\phi_{sss})(\origin,t) \\ &= n(n+1)\lim_{y\rightarrow 0}\left(\frac{1-\phi_{s}^{2}}{\phi^{2}}\right)(y,t) \geq n(n+1) \left(\frac{1-\phi_{s}^{2}}{\phi^{2}}\right)(x,t)
\end{align*}
\noindent for any $(x,t)\in (0,\infty)\times [0,T)$. Therefore, since the condition $A(\cdot,t) \geq 0$ also implies $-\phi_{ss}/\phi \leq (1-\phi_{s}^{2})/\phi^{2}$ in $\R^{n+1}\setminus\{\origin\}\times [0,T)$, we finally derive
\[
R_{g(t)}(p) = n\left(-2 \frac{\phi_{ss}}{\phi} + (n-1)\frac{1-\phi_{s}^{2}}{\phi^{2}} \right)(p,t) \leq n(n+1)\left(\frac{1-\phi_{s}^{2}}{\phi^{2}}\right)(p,t) \leq R_{g(t)}(\origin).
\]
\end{proof}
\subsection{Ricci flow with unbounded radius.} When the radius $\phi(x,0)$ diverges as $x\rightarrow \infty$ we generally have a weaker understanding of the geometry at infinity; for example, there exist initial data with exponential volume growth where the curvature stays away from zero at infinity as in the bounded radius case but the scale invariant quantity appearing in \eqref{crucialestimate} diverges at spatial infinity. In order to avoid such cases, we require the curvature of the initial data to decay as $x\rightarrow \infty$.
\\ We note that if $g_{0}$ is complete rotationally symmetric with no minimal hyperspheres and $\lvert\text{Rm}_{g_{0}}\rvert_{g_{0}}\rightarrow 0$ at infinity then $\phi(x,0)\rightarrow \infty$; for if $\phi(x,0)\rightarrow r < \infty$ then $\lvert L\rvert \rightarrow 0$ if and only if $\phi_{s}^{2}\rightarrow 1$ which is not possible. Therefore, the decay of the curvature at infinity implies that the radius must be unbounded. In the next Lemma we show that this is enough to control the flow outside a ball uniformly in time. 
\begin{lemma}\label{controlinfinitelimit}
Let $(\R^{n+1},g(t))_{0\leq t < T}$, with $T < \infty$, be the maximal Ricci flow solution evolving from a complete rotationally symmetric metric $g_{0}$ with no minimal embedded hyperspheres and curvature decaying at infinity. Then for any $\epsilon > 0$ there exist $\rho = \rho(\epsilon) > 0$ and $C = C(\epsilon)$ such that 
\[
\sup_{(\R^{n+1}\setminus B(\origin,\rho))\times [0,T)}\lvert \emph{Rm}_{g(t)}\rvert_{g(t)}\leq \epsilon, \,\,\,\,\,\,\,\, \sup_{B(\origin,\rho))\times [0,T)}\phi^{2}\lvert \emph{Rm}_{g(t)}\rvert_{g(t)}\leq C.
\]
\end{lemma}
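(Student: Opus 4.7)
The plan is to combine Perelman's pseudolocality with Lemma \ref{crucialestimateinterior}. Pseudolocality will provide the uniform smallness of $|\text{Rm}_{g(t)}|$ outside a large Euclidean ball, while Lemma \ref{crucialestimateinterior} will then propagate this into the desired scale-invariant bound inside the ball.

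As observed in the paragraph preceding the statement, the hypotheses of no minimal hyperspheres and decay of curvature at infinity force $\phi(x,0)\to\infty$ as $x\to\infty$. Combined with $|\text{Rm}_{g_{0}}|_{g_{0}}\to 0$ at infinity, this ensures that for any $r_{0}>0$ there exists $X_{0}=X_{0}(r_{0})$ such that for every $x_{0}\geq X_{0}$ the ball $B_{g_{0}}(x_{0},r_{0})$ satisfies $|\text{Rm}_{g_{0}}|\leq r_{0}^{-2}$ and is arbitrarily close (in $C^{1,\alpha}$) to a flat Euclidean $r_{0}$-ball. Indeed, the warped product form $ds^{2}+\phi^{2}\hat{g}$ with $\phi(x_{0})\gg r_{0}$ and $|\text{Rm}|$ small on $B_{g_{0}}(x_{0},r_{0})$ forces any pointed limit of $(\R^{n+1},g_{0},x_{j})$ along $x_{j}\to\infty$ to be isometric to flat $\R^{n+1}$; in particular volume ratios converge to the Euclidean value and the isoperimetric/volume hypothesis of pseudolocality is satisfied.

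Fix $\epsilon>0$ and choose $r_{0}=r_{0}(\epsilon,T)$ so large that $(\epsilon_{\mathrm{PL}} r_{0})^{-2}\leq\epsilon$ and $(\epsilon_{\mathrm{PL}} r_{0})^{2}\geq T$, where $\epsilon_{\mathrm{PL}}$ is the pseudolocality constant. For each $x_{0}\geq X_{0}(r_{0})$ pseudolocality then yields $|\text{Rm}_{g(t)}|_{g(t)}\leq\epsilon$ on $B_{g(t)}(x_{0},\epsilon_{\mathrm{PL}} r_{0})\times[0,T)$, and a covering of the exterior of a sufficiently large Euclidean ball $B(\origin,\rho)$ by such pseudolocality balls gives the first estimate in the statement. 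For the interior bound I would apply Lemma \ref{crucialestimateinterior} with $U=B(\origin,\rho)$: on the initial slice $B(\origin,\rho)\times\{0\}$, smoothness of $g_{0}$ on a compact set bounds $\phi_{0}^{2}|\text{Rm}_{g_{0}}|$; on the lateral boundary $\partial B(\origin,\rho)\times[0,T)$, the smallness of $|\text{Rm}_{g(t)}|_{g(t)}$ together with the identity $\phi_{t}=-\phi K-(n-1)\phi L$ coming from \eqref{equationphi} gives $|\partial_{t}\log\phi|\leq n\epsilon$, whence $\phi\leq (\max_{\partial B(\origin,\rho)}\phi_{0})\,e^{n\epsilon T}$ since $T<\infty$. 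Thus $\phi^{2}|\text{Rm}_{g(t)}|$ is controlled on the entire parabolic boundary of $U\times[0,T)$, and Lemma \ref{crucialestimateinterior} closes the argument.

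The main difficulty is verifying the near-Euclidean hypothesis of pseudolocality on balls around points far from $\origin$. Heuristically this is immediate from $\phi\to\infty$ and $|\text{Rm}_{g_{0}}|\to 0$, but rigorously it calls either for a pointed compactness argument producing flat limits of $(\R^{n+1},g_{0},x_{j})$, or for a direct volume computation exploiting the warped-product form of $g_{0}$ on $B_{g_{0}}(x_{0},r_{0})$.
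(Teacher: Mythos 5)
Your overall architecture is the same as the paper's: a pseudolocality-type statement gives the uniform smallness of $\lvert \mathrm{Rm}_{g(t)}\rvert_{g(t)}$ outside a large Euclidean ball, and Lemma \ref{crucialestimateinterior} (with the parabolic boundary controlled exactly as you describe, via $\partial_{t}\log\phi = -K-(n-1)L$) then yields the interior scale-invariant bound. The one place where you diverge is precisely the step you flag as the ``main difficulty'': verifying the hypotheses of the pseudolocality theorem far from the origin. The paper does not check Perelman's almost-Euclidean volume/isoperimetric condition at all. Instead it observes that by rotational symmetry any closed geodesic must lie on a minimal hypersphere, so Lemma \ref{nominimalspheres} rules out closed geodesics; combined with the bound on $\lvert\mathrm{Rm}_{g_{0}}\rvert_{g_{0}}$ this gives $\mathrm{inj}(g_{0})\geq \iota>0$, and the paper then invokes a ready-made application of pseudolocality (the cited result \cite{pseudolocalityapplication}) whose hypotheses are exactly bounded curvature, curvature decay at infinity, and an injectivity radius lower bound. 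This is both shorter and fully rigorous, and it is the only point where the ``no minimal hyperspheres'' hypothesis enters beyond forcing $\phi\to\infty$. Your alternative — extracting flat pointed limits or computing volume ratios directly from the warped-product structure — can be completed (the key observations are that $(\phi_{s}/\phi)^{2}=\phi^{-2}-L\to 0$ and $\phi_{ss}/\phi=-K\to 0$, so the fibers have vanishing second fundamental form and radius $\phi\to\infty$, which rules out collapsing and makes the balls $B_{g_{0}}(x_{0},r_{0})$ almost Euclidean), but as written this verification is left as a heuristic, and without some such non-collapsing input the pointed-compactness route does not immediately produce a smooth $(n+1)$-dimensional flat limit. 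If you want to keep your route, supply that computation; otherwise the injectivity-radius observation closes the argument more economically.
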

\begin{proof}
We first note that by rotational symmetry any closed geodesic must lie on a minimal hypersphere; by Lemma \ref{nominimalspheres} we conclude that there are no closed geodesics along the Ricci flow. Since the curvature of $g_{0}$ is bounded, we derive that there exists $\iota = \iota(g_{0}) > 0$ such that $\text{inj}(g_{0}) \geq \iota > 0$. Therefore from \cite{pseudolocalityapplication} we get that for any $\epsilon > 0$ there exists a radius $\rho$ sufficiently large such that the curvature stays bounded by $\epsilon$ in the complement of the Euclidean ball $B(\origin,\rho)$ uniformly in $[0,T)$. The second estimate in the statement follows from Lemma \ref{crucialestimateinterior} once we know that the curvature and hence the radius are uniformly bounded along the hypersphere of radius $2\rho$.
\end{proof}
\begin{remark}
In Lemma \ref{controlinfinitelimit} we consider a much larger set of initial data than that analysed in \cite{woolgar}. In our setting we only require the curvature to decay at spatial infinity at some rate, without prescribing it to be stronger than quadratic. As a consequence of that Lemma \ref{controlinfinitelimit}, for example, applies to initial metrics that open up to infinity either logarithmically or polynomially. 
\end{remark}
\section{Blow-up of Ricci flow with no minimal hyperspheres}
Throughout this section we consider the maximal Ricci flow solution $(\R^{n+1},g(t))$ evolving from a rotationally symmetric metric $g_{0}$ satisfying either the assumptions in Lemma \ref{controlfinitelimit} or those in Lemma \ref{controlinfinitelimit}; moreover, we assume that the maximal time of existence $T$ is finite. By analysing the possible singularity models of the flow we prove Theorem \ref{maintheoremnospheres} and Theorem \ref{maintheoremnospheresimmortal}.
\subsection{Singularity models of Ricci flows with no minimal hyperspheres.}
As observed above, the lack of minimal hyperspheres implies that $(\R^{n+1},g_{0})$ does not contain closed geodesics; therefore the injectivity radius of $g_{0}$ is bounded away from zero. We can thus apply the adaptation of Perelman's no local collapsing theorem \cite{pseudolocality} to complete bounded curvature Ricci flows \cite[Theorem 8.26]{ricciflowtechniques1}: the solution $g(t)$ is weakly $\kappa-$non collapsed in $\R^{n+1}\times (T/2,T)$ at any scale $r\in (0,\sqrt{T/2})$, with $\kappa$ some positive constant only depending on $g_{0}$ and $T$. By \cite{compactnesstheorem} there exist blow-up sequences $(p_{j},t_{j})$ such that $\lambda_{j}\doteq \lvert \text{Rm}_{g(t_{j})}\rvert_{g(t_{j})}(p_{j})\rightarrow \infty$ and the rescaled Ricci flows $(\R^{n+1},g_{j}(t),p_{j})$ defined by $g_{j}(t)\doteq \lambda_{j}g(t_{j}+t/\lambda_{j})$ converge in the pointed Cheeger-Gromov sense to an ancient solution $(M_{\infty},g_{\infty}(t),p_{\infty})_{-\infty < t\leq \omega}$, with $\omega \geq 0$, satisfying
\begin{itemize}
\item[(i)] $g_{\infty}(t)$ is complete,
\item[(ii)] $\sup_{M_{\infty}\times (-\infty,\omega]}\lvert\text{Rm}_{g_{\infty}(t)}\rvert_{g_{\infty}(t)} < \infty$,
\item[(iii)] $g_{\infty}(t)$ is non-flat,
\item[(iii)] $g_{\infty}(t)$ is (weakly) $\kappa$-non collapsed.
\end{itemize} 
\noindent We call any limit ancient solution $(M_{\infty},g_{\infty}(t),p_{\infty})$ a \emph{singularity model} for the flow.
\\By spherical symmetry given a blow-up sequence $(p_{j},t_{j})$ we take $p_{j} = (x_{j},\theta)$ for some $\theta\in S^{n}$; furthermore, without loss of generality we can set $x_{j}< \rho$ whenever there exists $\rho$ defined as in Lemma \ref{controlinfinitelimit}, otherwise the curvature would stay bounded along the blow-up sequence. 
\\In the following we let $\{U_{j}\}$ and $\Phi_{j}: U_{j}\rightarrow B_{g_{j}(0)}(p_{j},2^{j})$ denote the exhaustion of $M_{\infty}$ and the diffeomorphisms realizing the Cheeger-Gromov-Hamilton convergence respectively (see, e.g., \cite[Chapter 4]{ricciflowtechniques1}). 
For any $\nu > 0$ we define
\begin{equation}\label{definitionVj}
V(j,\nu) \doteq \bigcup_{\theta\in S^{n}}B_{g(t_{j})}((x_{j},\theta),\frac{\nu}{\sqrt{\lambda_{j}}}) = \bigcup_{\theta\in S^{n}}B_{g_{j}(0)}((x_{j},\theta),\nu).
\end{equation}
\noindent The rotational symmetry of the solutions ensures that $V(j,\nu)$ are annular regions in $\mathbb{R}^{n+1}$. The sets $V(j,\nu)$ provides a nice exhaustion of the limit manifold $M_{\infty}$; a similar argument for a rotationally invariant K{\"a}hler Ricci flow was discussed in \cite{song}.
\begin{lemma}\label{exhaustionlemma}
Any singularity model is simply connected.
\end{lemma}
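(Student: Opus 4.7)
The plan is to exhibit $M_{\infty}$ as an increasing union of open simply connected subsets, built out of the annular regions $V(j,\nu)$ defined in \eqref{definitionVj}. The key topological observation I would first establish is that, by construction, each $V(j,\nu)$ is an $SO(n+1)$-invariant open subset of $\R^{n+1}$; consequently its radial profile is an open subset of $[0,\infty)$ and $V(j,\nu)$ is diffeomorphic either to the ball $B^{n+1}$ (when $0$ lies in the closure of that profile) or to an annular product region $(a,b)\times S^{n}$ with $a>0$. Since $n\geq 2$ and $\pi_{1}(S^{n})=0$, it follows in both cases that $V(j,\nu)$ is simply connected.

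Next I would pick an arbitrary loop $\gamma\colon S^{1}\to M_{\infty}$ and note that its image lies in some compact subset $K\subset M_{\infty}$. The Cheeger-Gromov-Hamilton convergence $\Phi_{j}^{\ast}g_{j}(0)\to g_{\infty}(0)$ uniform on compacta gives, for $j$ large, both $K\subset U_{j}$ and $\Phi_{j}(K)\subset B_{g_{j}(0)}(p_{j},C)$ with $C=C(K)$ independent of $j$; since $p_{j}$ lies on the orbit $\{x_{j}\}\times S^{n}$, the inclusion $B_{g_{j}(0)}(p_{j},C)\subset V(j,C)$ is immediate from the definition of $V(j,\nu)$. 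Thus $\Phi_{j}\circ \gamma$ sits inside the simply connected region $V(j,C)$ and bounds a disk there. To transplant this null-homotopy back to $M_{\infty}$ I would then pull the disk back through $\Phi_{j}^{-1}$, which requires the containment $V(j,C)\subset \Phi_{j}(U_{j})=B_{g_{j}(0)}(p_{j},2^{j})$.

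Verifying this containment is where the main obstacle lies. The basic estimate is $V(j,C)\subset B_{g_{j}(0)}(p_{j},C+\pi\sqrt{\lambda_{j}}\phi(x_{j},t_{j}))$, since the orbit of $p_{j}$ has diameter $\pi\sqrt{\lambda_{j}}\phi(x_{j},t_{j})$ in $g_{j}(0)$. When $\sqrt{\lambda_{j}}\phi(x_{j},t_{j})$ stays bounded this is automatic for $j$ large; in the unbounded case I would exploit the flexibility in the pointed Cheeger-Gromov-Hamilton compactness theorem \cite{compactnesstheorem} to replace the exhausting radii $2^{j}$ by a sufficiently large sequence $R_{j}\to \infty$ and perform a diagonal extraction so that $V(j,C)\subset \Phi_{j}(U_{j})$ still holds. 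Once this is arranged, $\Phi_{j}^{-1}$ applied to the disk yields a null-homotopy of $\gamma$ inside $M_{\infty}$, and since $\gamma$ was arbitrary we conclude $\pi_{1}(M_{\infty})=0$.
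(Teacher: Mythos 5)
Your overall strategy is the same as the paper's: both arguments reduce simple connectivity of $M_{\infty}$ to the fact that the rotationally invariant sets $V(j,\nu)$ are diffeomorphic to balls or to $(a,b)\times S^{n}$ (hence simply connected for $n\geq 2$) and are sandwiched between metric balls centred at $p_{j}$, so that loops in $M_{\infty}$ can be contracted inside pullbacks of these sets. The paper phrases this as an exhaustion of $M_{\infty}$ by simply connected open sets $\Phi_{j}^{-1}(V(j,\cdot))$; your loop-by-loop null-homotopy is an equivalent packaging.

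The genuine gap is at the step you yourself flag as ``where the main obstacle lies.'' The dichotomy you set up is unnecessary, and your treatment of the second branch is not a proof. In the setting of this section the rescaled fibre diameter $\sqrt{\lambda_{j}}\,\phi(x_{j},t_{j})$ is \emph{always} uniformly bounded: by Lemma \ref{controlfinitelimit}(ii) (bounded radius) or Lemma \ref{controlinfinitelimit} (unbounded radius) one has $\phi^{2}\lvert\mathrm{Rm}_{g(t)}\rvert_{g(t)}\leq C$ at the blow-up points, i.e.\ $\lambda_{j}\phi^{2}(x_{j},t_{j})\leq C$, which is exactly the content of the paper's Claim \ref{claimexistenceuniformnu} and gives $V(j,\nu)\subset B_{g_{j}(0)}(p_{j},2\nu)$ for $\nu$ larger than a uniform $\bar{\nu}$. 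Your proposal never invokes this scale-invariant estimate, so the ``bounded case'' is left as an unverified hypothesis; and the fallback for the ``unbounded case'' does not work: the exhausting radii in the Cheeger--Gromov--Hamilton compactness theorem are produced by the convergence itself and cannot be chosen a posteriori to dominate $\sqrt{\lambda_{j}}\,\phi(x_{j},t_{j})$ at the \emph{same} index $j$ — passing to a subsequence rescales both sides of the required inequality simultaneously and gains nothing. If the fibre diameters really were unbounded after rescaling, the disk you produce in $V(j,C)$ could wrap around the fibre and leave the image of $\Phi_{j}$, and indeed the conclusion could not be reached this way (compare blow-ups of long thin cylinders). Inserting the observation $\lambda_{j}\phi^{2}(x_{j},t_{j})\leq C$ from Lemmas \ref{controlfinitelimit} and \ref{controlinfinitelimit} closes the gap and makes your argument complete, with the second branch deleted.
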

\begin{proof}
We first prove a preliminary property.
\begin{claim}\label{claimexistenceuniformnu}
There exists a positive radius $\bar{\nu}$ independent of $j$ such that 
\[
B_{g_{j}(0)}(p_{j},\nu) \subset V(j,\nu)\subset B_{g_{j}(0)}(p_{j},2\nu),
\]
\noindent for any $\nu \geq \bar{\nu}$.
\end{claim}
\begin{proof}[Proof of Claim \ref{claimexistenceuniformnu}] It suffices to show that $d_{g_{j}(0)}((x_{j},\theta),(x_{j},\theta^{\prime}))\leq C < \infty$ , for any $\theta,\theta^{\prime}\in S^{n}$ and uniformly in $j$. The bound follows from Lemma \ref{controlfinitelimit} and Lemma \ref{controlinfinitelimit}; namely, we can find some positive constant $\alpha$ only depending on the dimension such that
\[
d_{g_{j}(0)}((x_{j},\theta),(x_{j},\theta^{\prime})) = \sqrt{\lambda_{j}}\,d_{g(t_{j})}((x_{j},\theta),(x_{j},\theta^{\prime}))\leq \alpha\sqrt{\lambda_{j}}\,\phi(x_{j},t_{j}) \leq C 
\]
\end{proof}
We note that by Claim \ref{claimexistenceuniformnu} the maps $\Phi_{j}^{-1}$ given by Hamilton's Compactness theorem are well defined on $V(j,2^{j-1})$ for $j > j_{0}$, for some $j_{0}$. We can pick $\tilde{j_{0}} > j_{0}$ sufficiently large such that for any $q\in \overline{U_{j_{0}}}$ we have
\[
d_{\Phi_{\tilde{j_{0}} + 2}^{\ast}g_{\tilde{j_{0}}+2}(0)}(p_{\infty},q) \leq 1 + d_{\Phi_{\tilde{j_{0}}}^{\ast}g_{\tilde{j_{0}}}(0)}(p_{\infty},q) \leq 1 + d_{g_{\tilde{j_{0}}}(0)}(p_{\tilde{j_{0}}},\Phi_{\tilde{j_{0}}}(q)) \leq 1 + 2^{\tilde{j_{0}}} < 2^{\tilde{j_{0}}+1}.
\] 
\noindent We thus obtain the following inclusions
\[
\overline{U_{j_{0}}} \subset \Phi_{\tilde{j_{0}}+2}^{-1}\left(B_{g_{\tilde{j_{0}}+2}(0)}(p_{\tilde{j_{0}}+2},2^{\tilde{j_{0}}+1})\right) \subset \Phi_{\tilde{j_{0}}+2}^{-1}\left(V(\tilde{j_{0}}+2,2^{\tilde{j_{0}}+1} )\right) \doteq \tilde{V_{1}}\subset M_{\infty}.
\]
\noindent Since we can iterate the method by replacing $\overline{U_{j_{0}}}$ with $\overline{U_{\tilde{j_{0}}+2}}$, we may conclude that $M_{\infty}$ admits an exhaustion $\{\tilde{V}_{j}\}$ of simply connected open sets; that completes the proof.  
\end{proof}
Next we characterize the geometry of the possible singularity models.
\begin{lemma}\label{characterizationsingularitymodels}
Either one of the following is satisfied:
\begin{itemize}
\item[(i)] $(M_{\infty},g_{\infty}(t))$ is the self-similar shrinking Ricci soliton on $\R\times S^{n}$.
\item[(ii)]$(M_{\infty},g_{\infty}(t))$ is a conformally flat positively curved $\kappa$-solution, for some $\kappa > 0$. Moreover $M_{\infty} = \R^{n+1}$.  
\end{itemize}
\end{lemma}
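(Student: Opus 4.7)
The plan is to first upgrade the rotational symmetry of the approximating flows to the Cheeger--Gromov limit, and then apply a strong maximum principle dichotomy to the radius function. For the first step, since each $g_j$ is $SO(n+1)$-invariant and the basepoints $p_j=(x_j,\theta)$ lie on common orbit spheres, the diffeomorphisms $\Phi_j$ from Hamilton's compactness theorem can be chosen equivariantly (define them along one meridian of constant $\theta$ and extend by the group action). Combined with Claim \ref{claimexistenceuniformnu}, this presents $(M_\infty,g_\infty(t))$ as a warped product $d\sigma^{2}+\phi_\infty^{2}(\sigma,t)\hat{g}$ over some open $\sigma$-interval $I$. By Lemma \ref{nominimalspheres} the radial derivative $\phi_s$ is strictly positive in every rescaled flow $g_j$; since $\phi_s$ is scale invariant, one passes to the limit to obtain $\phi_{\infty,\sigma}\geq 0$ on $I$.

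Next I would apply the strong maximum principle to the evolution \eqref{equationfirstderivative}, which via \eqref{formulalaplacian} and \eqref{scalarcurvature} rewrites as the linear parabolic equation $(\phi_s)_t=\Delta\phi_s+(R_{g_\infty}/n)\phi_s$. This yields the dichotomy: either $\phi_{\infty,\sigma}\equiv 0$, in which case $g_\infty$ is a round cylinder evolving purely by Ricci flow and, being $\kappa$-noncollapsed, non-flat and ancient, must be the self-similar shrinking soliton on $\R\times S^{n}$ (case (i)); or $\phi_{\infty,\sigma}>0$ strictly on all of $I$.

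The main obstacle lies in the strict case. To obtain $M_\infty=\R^{n+1}$, I would argue that the narrow end of $I$ must cap off smoothly: if $I$ extended to $-\infty$ with $\phi_\infty\to c>0$ there, then $\kappa$-noncollapsing together with Hamilton's compactness applied along that end would produce a round cylinder as a secondary pointed limit, contradicting the strict monotonicity $\phi_{\infty,\sigma}>0$; hence $I=(\sigma_0,\sigma_+)$ with $\phi_\infty\to 0$ and $\phi_{\infty,\sigma}\to 1$ at $\sigma_0$, so $M_\infty\cong\R^{n+1}$ and the boundary conditions \eqref{boundaryconditions} hold. Conformal flatness is then automatic, since any rotationally symmetric metric on $\R^{n+1}$ admits polar conformal coordinates $e^{2\psi}(dr^{2}+r^{2}\hat{g})$. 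For strict positivity of $K_\infty$ and $L_\infty$, I would use Chen's theorem to get $R_{g_\infty}\geq 0$, pass the scale-invariant pinching $A\geq-\beta$ from Lemma \ref{lowerboundApersists} to the limit, exploit $\phi_{\infty,\sigma}\to 1$ at the tip together with \eqref{evolutionA} to bootstrap $A_\infty\geq 0$, and finally run the strong maximum principle on \eqref{evolutionA} and on the equation for $K$ derived from \eqref{equationsecondderivative} to upgrade nonnegativity to strict positivity, since any interior zero would force the limit to split off a line and return us to case (i).
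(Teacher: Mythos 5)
Your proposal rests on first showing that the singularity model is itself rotationally symmetric (a global warped product $d\sigma^{2}+\phi_\infty^{2}\hat g$), by choosing the Cheeger--Gromov diffeomorphisms equivariantly. This is precisely the step the paper declines to take: the remark following Lemma \ref{characterizationsingularitymodels} states explicitly that it is not clear how to deduce a priori that the singularity model is rotationally symmetric. Asserting that the $\Phi_j$ ``can be chosen equivariantly (define them along one meridian and extend by the group action)'' is not a proof; one would need to show that the limiting isometric $SO(n+1)$-action exists, that its orbits are the limits of the orbit spheres, and that no change of orbit type occurs. The paper sidesteps all of this: it only uses that the approximating metrics are conformally flat (so the Weyl, resp.\ Cotton, tensor of the limit vanishes) and that $M_\infty$ is simply connected (Lemma \ref{exhaustionlemma}), and then argues intrinsically on the limit via \cite{zhang}/\cite{stronguniqueness}, Hamilton's strong maximum principle for systems, the holonomy/de Rham splitting dichotomy, and the soul theorem.

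Even granting the warped-product structure, two further steps fail. First, your route to positive curvature does not close: Chen's theorem gives $R_{g_\infty}\geq 0$ and Lemma \ref{lowerboundApersists} gives $A_\infty\geq 0$, but $R=n\bigl(2K+(n-1)L\bigr)\geq 0$ together with $L\geq K$ does not imply $K\geq 0$ and $L\geq 0$ (one can have $K$ very negative and $L$ large). Nonnegativity of the curvature operator is the crux of the lemma, and the paper obtains it from the structure theory of conformally flat ancient solutions (\cite{zhang} for $n\geq 3$, \cite{stronguniqueness} for $n=2$), not from $A$ and $R$. Second, your topological argument is not sound: producing a round cylinder as a secondary pointed limit of an asymptotically cylindrical end does not contradict strict monotonicity of $\phi_\infty$ on the original limit (they are different manifolds), and simple connectivity does not exclude $M_\infty\cong\R\times S^{n}$ since that space is simply connected for $n\geq 2$. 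In the paper the conclusion $M_\infty=\R^{n+1}$ in case (ii) comes from strict positivity of the curvature operator plus the soul theorem, so positivity must be established first, whereas your argument tries to obtain the topology before the curvature positivity and becomes circular.
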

\begin{proof}
If $n= 2$ (i.e. the three-dimensional case) then the Cotton tensor of the singularity model is identically zero; moreover, from \cite{stronguniqueness} it follows that the curvature operator of the singularity model is nonnegative. Similarly, when $n\geq 3$ the Weyl tensor of the singularity model is identically zero; accordingly, by \cite{zhang} we also derive that the curvature operator is nonnegative. In particular, since any singularity model is weakly $\kappa$-non collapsed at all scales, we find that any singularity model is a $\kappa$-solution. 
\\Assume that there exists $q\in M_{\infty}$ and $t\in (-\infty,\omega]$ such that the curvature operator is not strictly positive. Then the strong maximum principle for systems implies that the holonomy of the singularity model is not maximal \cite{positivecurvature}. Since $(M_{\infty},g_{\infty}(t))$ is non-compact, non-flat and has nonnegative curvature we find that $g_{\infty}(t)$ either has reducible holonomy or is K\"ahler. However, for any real dimension $2m\geq 4$, any conformally flat K\"ahler manifold is scalar flat \cite[Proposition 2.68]{besse}; therefore, we may conclude that the holonomy is reducible. Since $M_{\infty}$ is simply connected (Lemma \ref{exhaustionlemma}) and $g_{\infty}(t)$ is complete, we may apply de Rham's decomposition theorem and obtain that the singularity splits off a line. Finally a Riemannian product $\R\times N$ is conformally flat if and only if $N$ is a space form \cite[Section C]{incollection}; therefore we find that $(M_{\infty},g_{\infty}(t))$ must be the self-similar shrinking soliton on $\R\times S^{n}$. 
\\Suppose that there exists $q\in M_{\infty}$ and $t\in (-\infty,\omega]$ such that $\text{Rm}_{g_{\infty}(t)}(q) > 0$. By Hamilton's strong maximum principle \cite{positivecurvature} we derive that the curvature operator is positive everywhere in the space-time; from the soul theorem it also follows that $M_{\infty}= \R^{n+1}$.
\end{proof} 
\begin{remark}
It is not clear to us whether one can \emph{a priori} deduce that any singularity model is in fact rotationally symmetric; however, for our purposes it suffices to know that any singularity model which is not a family of shrinking cylinders must be a conformally flat positively curved $\kappa$-solution.
\end{remark}
\begin{remark}\label{remarkbrendle}
We also point out that one can actually refine the previous classification even further. In the three-dimensional case we can apply the recent analysis in \cite{brendle} to derive that any singularity model which is not a family of shrinking cylinders is isometric to the Bryant soliton (up to scaling). Similarly, when $n\geq 3$ by \cite{catino2} we obtain that any conformally flat ancient solution as in (ii) of Lemma \ref{characterizationsingularitymodels} is in fact rotationally symmetric; by the generalization of \cite{brendle} to higher dimensions \cite{brendle2} we may finally conclude that any singularity model with positive curvature operator is isometric to the Bryant soliton.
\end{remark}

\subsection{Type-II singularities.}
We may now address the proof of Theorem \ref{maintheoremnospheres}; the argument relies on the characterization of Type-I singularities described in \cite{type1}. Similarly to \cite{type1}, we introduce the singular set $\Sigma \subset \mathbb{R}^{n+1}$, which is defined by the following property: given a point $p\in\mathbb{R}^{n+1}$ then $\lvert \text{Rm}_{g(t)}\rvert_{g(t)}$ stays bounded in some neighbourhood of $p$ as $t\nearrow T$ if and only if $p\in\mathbb{R}^{n+1}\setminus \Sigma$.
\begin{proof}[Proof of Theorem \ref{maintheoremnospheres}] Assume for a contradiction that $g(t)$ is a Type-I Ricci flow. If the origin is not in $\Sigma$ then by definition of singular set we can find some small $\delta > 0$ such that $\lvert \text{Rm}_{g(t)}\rvert_{g(t)}\leq C < \infty$ uniformly in $B(\origin,2\delta)\times [0,T)$. Therefore there exists $\nu > 0$ such that $\phi(\delta,t)\geq \nu > 0$ for any $t\in [0,T)$; by Lemma \ref{nominimalspheres} we get $\phi(x,t)\geq \phi(\delta,t)\geq \nu$ for any $x\geq \delta$ and for any $t\in[0,T)$. From (ii) of Lemma \ref{controlfinitelimit} we derive that $\lvert\text{Rm}\rvert \leq C$ outside $B(\origin,2\delta)$ uniformly in time; by \cite{shi} the last condition implies that the flow smoothly extends to time $T$, which is a contradiction. 
\\We may thus consider the case $\origin\in\Sigma$. By \cite[Theorem 1.1]{type1} we can parabolically dilate the solution at the origin and obtain, up to passing to a subsequence, a \emph{non-flat} gradient shrinking soliton in canonical form $(M_{\infty},g_{\infty}(t))$. Since the soliton is simply connected (Lemma \ref{exhaustionlemma}) and conformally flat (Lemma \ref{characterizationsingularitymodels}), from the classification in \cite[Theorem 1.2]{zhang} we derive that 
$(M_{\infty},g_{\infty}(t))$ must be a shrinking cylinder. By the Cheeger-Gromov-Hamilton convergence and the rotational symmetry we conclude that the cylinder $\R\times S^{n}$ is exhausted by open sets diffeomorphic to $\R^{n+1}$; that gives a contradiction\footnote{Explicitly, if the cylinder admitted an exhaustion by open sets diffeomorphic to $\R^{n+1}$, then its rank 1 compactly supported de Rham cohomology group would be trivial.}. Since by (i) of Lemma \ref{controlfinitelimit} $g(t)$ develops a finite-time singularity, we have just shown that this singularity must be Type-II.
\\ Once we know that $g(t)$ is a Type-II flow we can pick a blow-up sequence whose associated singularity model $(M_{\infty},g_{\infty}(t))$ is an \emph{eternal} solution to the Ricci flow with $\lvert \text{Rm}_{\infty}\rvert$ attaining its supremum in the space-time (see, e.g., \cite[Section 16]{formationsingularities}). By Lemma \ref{characterizationsingularitymodels} we derive that $M_{\infty} = \R^{n+1}$ and that $g_{\infty}(t)$ is a $\kappa$-solution with positive curvature operator, because shrinking cylinders extinguish in finite time. Up to choosing a different blow-up sequence we may thus assume that there exists a singularity model which is a conformally flat eternal solution to the Ricci flow with bounded positive curvature operator and scalar curvature attaining its supremum in the space-time; by Hamilton's rigidity result \cite{eternal} we deduce that $(M_{\infty},g_{\infty}(t))$ is a steady gradient Ricci soliton. Finally by \cite{cao} we conclude that this steady soliton is isometric to the Bryant soliton (up to scaling) \cite{bryant}.
\end{proof}
\begin{remark}\label{remarkbrendle2}
According to Remark \ref{remarkbrendle}, one can improve the result in Theorem \ref{maintheoremnospheres} and obtain that due to \cite{brendle} in dimension three and \cite{catino2} and \cite{brendle2} in dimension greater than three any blow-up sequence whose associated singularity model is not a shrinking cylinder gives rise to the Bryant soliton in the limit.
\end{remark}
\begin{remark}
The argument above highlights that the lack of minimal spheres guarantees that the curvature does not concentrate locally around some neck-region; the singularity is hence \emph{slowly} forming. We also point out that the non-compactness of the underlying manifold played a crucial role; in the analogous case of $S^{n+1}$ one has to take into account global Type-I singularities where the volume of the manifold approaches zero in the limit.
\end{remark}
We finally show that the Bryant soliton has to appear at the origin $\origin\in\R^{n+1}$ if the pinching condition \eqref{pinchingcondition} is satisfied.
\begin{corollary}\label{corollarypinching}
Under the same hypotheses as Theorem \ref{maintheoremnospheres}, if further the initial metric satisfies $K_{g_{0}}\leq L_{g_{0}}$ then there exists a sequence $t_{j}\nearrow T$ such that the rescaled Ricci flows $(\R^{n+1},g_{j}(t),\origin)$ defined by $g_{j}(t) = R_{g(t_{j})}(\origin)g(t_{j}+t/R_{g(t_{j})}(\origin))$ on $[-R_{g(t_{j})}(\origin)t_{j},0]$ converge to the Bryant soliton (up to scaling).
\end{corollary}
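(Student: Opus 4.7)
My plan is to combine the Type-II blow-up already achieved in the proof of Theorem \ref{maintheoremnospheres} with the fact that, under the pinching hypothesis, the scalar curvature attains its spatial supremum at the origin at every time.

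First, I would observe that $K_{g_0} \leq L_{g_0}$ is equivalent to $A(\cdot, 0) \geq 0$. By Lemma \ref{lowerboundApersists} this is preserved, so $A(\cdot, t) \geq 0$ for every $t \in [0, T)$, and Lemma \ref{tipregion} then gives
\[
R_{g(t)}(\origin) = \sup_{p \in \R^{n+1}} R_{g(t)}(p), \qquad t \in [0, T).
\]
Next, I would show that $R_{g(t)}(\origin) \to \infty$ as $t \nearrow T$: by Theorem \ref{maintheoremnospheres}, sharpened via Remark \ref{remarkbrendle2}, some blow-up sequence $(p_j, t_j)$ converges to the Bryant soliton, so $\lvert \text{Rm}\rvert(p_j, t_j)$ and $R_{g(t_j)}(p_j)$ (which are comparable at the tip of Bryant) both diverge, whence by the previous display $R_{g(t_j)}(\origin) \geq R_{g(t_j)}(p_j) \to \infty$.

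I would then perform Hamilton's Type-II point-picking centered at the origin. Setting $T_j = T - 1/j$, I pick $t_j \in [0, T_j]$ at which $R_{g(t)}(\origin)(T_j - t)$ is within a factor of two of its supremum on $[0, T_j]$; standard manipulations yield $\lambda_j := R_{g(t_j)}(\origin) \to \infty$, $\lambda_j t_j \to \infty$, $\lambda_j(T_j - t_j) \to \infty$, and the rescaled solutions $g_j(t) = \lambda_j g(t_j + t/\lambda_j)$ satisfy $R_{g_j(t)}(\origin) \leq 4$ on time intervals exhausting $(-\infty, \infty)$. Lemma \ref{tipregion} applied to $g_j$ then gives $R_{g_j(t)} \leq 4$ everywhere. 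Combined with the preserved pinching $A \geq 0$ and $\kappa$-noncollapsing at $(\origin, t_j)$ at scale $\lambda_j^{-1/2}$, Hamilton's compactness theorem produces a subsequential pointed Cheeger-Gromov limit $(\R^{n+1}, g_\infty(t), \origin_\infty)$ which is a complete eternal solution with $R_{g_\infty(0)}(\origin_\infty) = 1 = \sup R_{g_\infty}$.

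By Lemma \ref{characterizationsingularitymodels}, $g_\infty$ is either a family of shrinking cylinders or a conformally flat positively curved $\kappa$-solution on $\R^{n+1}$; the cylinder case is excluded since $g_\infty$ is eternal. Then, repeating the last step of the proof of Theorem \ref{maintheoremnospheres}, Hamilton's rigidity theorem \cite{eternal} identifies $g_\infty$ with a steady gradient Ricci soliton whose scalar curvature attains its supremum, and Cao's classification \cite{cao} identifies it with the Bryant soliton up to scaling; restricting to $(-\infty, 0]$ yields the stated convergence. The main obstacle I foresee is upgrading the uniform bound on $R_{g_j}$ to a bound on $\lvert \text{Rm}_{g_j}\rvert$ needed for Hamilton's compactness; here I plan to exploit the one-sided estimate $K \leq L$ from the pinching, together with $\phi_s > 0$ (Lemma \ref{nominimalspheres}) and the evolution equations of Section 2, or, failing that, to directly match normalizations with the already-known Bryant limit from Theorem \ref{maintheoremnospheres} via a consistency argument that forces the blow-up to be centered near $\origin$.
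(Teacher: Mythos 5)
Your route is genuinely different from the paper's. The paper's proof is a two-line re-centering: Theorem \ref{maintheoremnospheres} already supplies a blow-up sequence $(p_{j},t_{j})$ converging to the Bryant soliton, and Lemma \ref{tipregion} is invoked to transfer that convergence to the sequence rescaled by $R_{g(t_{j})}(\origin)$ and based at $\origin$. You instead rebuild the blow-up from scratch at the origin: Lemma \ref{tipregion} reduces Hamilton's Type-II point-picking to a maximization in $t$ alone at the fixed spatial point $\origin$, you extract an eternal limit, and you re-run the classification (Lemma \ref{characterizationsingularitymodels}, then \cite{eternal} and \cite{cao}). This is more work but more self-contained than the paper's re-centering, which tacitly uses that $\origin$ stays at bounded $g_{j}(0)$-distance from $p_{j}$ and that $R_{g(t_{j})}(\origin)/R_{g(t_{j})}(p_{j})\rightarrow 1$.

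That said, there is a genuine gap, which you flag yourself: Hamilton's compactness theorem needs a uniform bound on $\lvert\text{Rm}_{g_{j}}\rvert$, not merely on $R_{g_{j}}$, and you only offer a plan. The gap is fillable from ingredients already in the paper, and this is exactly where the pinching enters quantitatively. The proof of Lemma \ref{tipregion} shows $\partial_{s}L\leq 0$ when $A\geq 0$, and since $L$ tends to a positive limit at spatial infinity for $t$ close to $T$ (proof of Lemma \ref{controlfinitelimit}), one gets $0\leq L(p,t)\leq \lim_{y\rightarrow 0}L(y,t)=R_{g(t)}(\origin)/(n(n+1))$; combining $K\leq L$ with $2K=R/n-(n-1)L$ and the maximum-principle lower bound $R_{g(t)}\geq \inf R_{g_{0}}$ yields $\lvert K\rvert+\lvert L\rvert\leq C(1+R_{g(t)}(\origin))$. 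This supplies the missing curvature bound for $g_{j}$, and it also closes a second hole you leave open: your ``standard manipulations'' produce $\lambda_{j}(T_{j}-t_{j})\rightarrow\infty$ (hence an eternal rather than merely ancient limit) only if $\sup_{t}R_{g(t)}(\origin)(T-t)=\infty$, which does not follow from $R_{g(t_{j})}(\origin)\rightarrow\infty$ alone but does follow from the comparison $\lvert\text{Rm}_{g(t)}\rvert\leq C(1+R_{g(t)}(\origin))$ together with the Type-II conclusion of Theorem \ref{maintheoremnospheres}. Finally, replace ``within a factor of two'' by a factor $1-1/j$: otherwise the limit only satisfies $\sup R_{g_{\infty}}\leq 2R_{g_{\infty}(0)}(\origin_{\infty})$, and Hamilton's rigidity theorem, which requires the supremum to be attained, does not apply.
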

\begin{proof} By Theorem \ref{maintheoremnospheres} we know that the flow develops a Type-II singularity at some $T<\infty$ and that there exist rescaled Ricci flows $(\R^{n+1},g_{j}(t),p_{j})$ defined by $g_{j}(t)=R_{g(t_{j})}(p_{j})g(t_{j}+t/R_{g(t_{j})}(p_{j}))$ smoothly converging to the Bryant soliton, for some $t_{j}\nearrow T$. We can then apply Lemma \ref{tipregion} and conclude that the rescaled sequence $(\R^{n+1},g_{j}(t),\origin)$ defined by $g_{j}(t)=R_{g(t_{j})}(\origin)g(t_{j}+t/R_{g(t_{j})}(\origin))$ converges to the Bryant soliton as well.
\end{proof}
\subsection{Immortal solutions.} We now analyse the maximal Ricci flow solution $g(t)$ evolving from a rotationally symmetric metric $g_{0}$ with no minimal embedded hyperspheres and curvature decaying to zero at infinity. We assume that such solution develops a singularity at some $T < \infty$ and we aim to exhibit a contradiction, hence proving Theorem \ref{maintheoremnospheresimmortal}. We first show that $\phi_{s}$ admits a uniform positive lower bound in the compact region where singularities may form, then we use the exhaustion constructed in Lemma \ref{exhaustionlemma} to prove that the condition about $\phi_{s}$ implies that the singularity model $(M_{\infty},g_{\infty}(t))$ has positive asymptotic volume ratio. 
\begin{proof}[Proof of Theorem \ref{maintheoremnospheresimmortal}]
By Lemma \ref{controlinfinitelimit} we deduce that there exists $\rho_{1} > 0$ such that $\lvert\text{Rm}_{g(t)}\rvert_{g(t)}\leq C$ on $\R^{n+1}\setminus B(\origin,\rho_{1})$ uniformly with respect to time, for some $C > 0$; furthermore, the estimate \eqref{crucialestimate} holds in $B(\origin,\rho_{1})\times [0,T)$. In particular, from Lemma \ref{existlimit} and Lemma \ref{nominimalspheres} it follows that there exists $\rho_{0} < \rho_{1}$ satisfying $\lim_{t\nearrow T}\phi(x,t) = 0$ for any $x < \rho_{0}$ while $\lim_{t\nearrow T}\phi(x,t) > 0$ for all $x\in (\rho_{0},\rho_{1}]$. 
\\Given any $x\in(\rho_{0},\rho_{1})$ by Lemma \ref{controlinfinitelimit} we find that 
\[
(\lvert 1-\phi_{s}^{2}\rvert + \lvert\phi_{ss}\rvert)(x,t) = (\phi^{2}\lvert L\rvert + \phi\lvert K \rvert)(x,t)\leq C(x) < \infty,
\]
\noindent for all $t\in[0,T)$. Therefore both $\phi_{s}$ and $\phi_{ss}$ are uniformly bonded at any radius $x\in(\rho_{0},\rho_{1})$. Once we choose $\rho_{1}$ large enough, we let $\rho_{0} + 1 < x_{0} < x_{1} < \rho_{1}$ and $\delta > 0$ satisfy
\[
\phi(x_{1},t) - \phi(x_{0},t) \geq \phi(x_{1},0)e^{-CT} - \phi(x_{0},0)e^{CT} \geq \delta > 0,
\]
\noindent where we have used that given $\delta > 0$ we can always find $x_{1}$ and $x_{0}$ as above because $\phi(x,0)\rightarrow \infty$ as observed in Section 3.2.
\begin{claim}\label{claim1}
There exists $\tilde{x} > \rho_{0}$ and $\tilde{\beta} > 0$ such that $\phi_{s}(\tilde{x},t)\geq \tilde{\beta} > 0$ for any $t\in[0,T)$.
\end{claim}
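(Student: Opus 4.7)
The claim will follow from a parabolic Harnack argument applied to the evolution equation \eqref{equationfirstderivative} for $u := \phi_s$ on the compact annular region $[x_0, x_1] \times [0, T)$, exploiting the strict positivity $u > 0$ guaranteed by Lemma \ref{nominimalspheres}. The first step is to consolidate uniform a priori bounds on this region: by the hypothesis preceding the claim and Lemma \ref{existlimit}, $\phi$ stays bounded above and away from zero; the paragraph above the claim supplies uniform bounds on $\phi_s$ and $\phi_{ss}$; and integrating the evolution $|\partial_t \log \xi| = n|\phi_{ss}/\phi|$ across the finite interval $[0, T)$ yields two-sided bounds on $\xi$ in terms of $\xi(\cdot,0)$.

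Next, I would fix an interior point $\tilde x \in (x_0, x_1)$ and rewrite \eqref{equationfirstderivative} in the fixed Lagrangian $x$-coordinate via $\partial_s = \xi^{-1} \partial_x$; the resulting equation for $u$ is a uniformly parabolic linear PDE with bounded coefficients on the fixed cylinder $[x_0, x_1] \times [0, T)$, whose solution is strictly positive throughout by Lemma \ref{nominimalspheres}. Since $u(\cdot, 0)$ is continuous and strictly positive on the compact interval $[x_0, x_1]$, compactness yields some $c_0 > 0$ with $u(\cdot, 0) \geq c_0$. Moser's parabolic Harnack inequality, applied iteratively on small parabolic cylinders centred at $\tilde x$ and contained in $(x_0, x_1) \times [0, T)$, transports the initial lower bound $c_0$ forward in time; the Harnack constant depends only on the ellipticity and the coefficient bounds established in the first step, so finitely many iterations (which suffice since $T < \infty$) produce the uniform lower bound $\phi_s(\tilde x, t) \geq \tilde \beta > 0$ valid for all $t \in [0, T)$.

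The main technical point is really the first step: one must ensure that every coefficient of the linear parabolic equation for $u$ remains uniformly bounded all the way up to the singular time $T$, despite the overall flow being assumed to develop a singularity at $T$. Once uniform ellipticity and bounded coefficients are in place on $[x_0, x_1] \times [0, T)$, the Harnack iteration is completely standard and delivers the claim without further difficulty.
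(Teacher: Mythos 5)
Your Harnack-chain argument is correct in outline but takes a genuinely different route from the paper's. The paper first shows that $\phi_{s}(x,\cdot)$ is Lipschitz in time for each fixed $x>\rho_{0}$ (by computing $\partial_{t}(\partial_{s}\phi)$ and invoking Shi's derivative estimates), so that $\lim_{t\nearrow T}\phi_{s}(x,t)$ exists; it then rules out the possibility that all these limits vanish by producing a definite gap $\phi(x_{1},t)-\phi(x_{0},t)\geq\delta>0$ --- available only because $\phi(x,0)\to\infty$ at spatial infinity, so that $x_{0},x_{1}$ can be chosen with $\phi(x_{1},0)e^{-CT}-\phi(x_{0},0)e^{CT}\geq\delta$ --- combined with the mean value inequality $\delta\leq C\sup_{[x_{0},x_{1}]}\phi_{s}(\cdot,t)$ and the uniform bound on $\phi_{ss}$, which upgrades pointwise decay of $\phi_{s}$ to decay of the supremum. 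Your approach replaces all of this with the quantitative strong maximum principle for $u=\phi_{s}>0$ (positivity from Lemma \ref{nominimalspheres}): it is purely local, never uses the unboundedness of the radius, and gives the lower bound at \emph{every} interior $\tilde{x}$ rather than at a single one; the price is heavier parabolic machinery. The one point you have not actually justified in your first step is the drift coefficient: rewriting $\partial_{s}^{2}=\xi^{-1}\partial_{x}\left(\xi^{-1}\partial_{x}\right)$ in non-divergence form produces a term proportional to $\xi^{-3}\xi_{x}$, and the two-sided bound on $\xi$ obtained by integrating $\partial_{t}\log\xi=n\phi_{ss}/\phi$ controls $\xi$ but not $\xi_{x}$. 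This is fillable --- either differentiate \eqref{evolutionxi} in $x$ and use Shi's derivative estimates on the annulus (which bound $\phi_{sss}$, exactly the input the paper uses for its Lipschitz computation) together with Gronwall, or keep the operator in divergence form with respect to the measure $\xi\,dx$, for which Moser's iteration needs only the two-sided bounds on $\xi$ and the $L^{\infty}$ bounds on $(n-2)\phi_{s}/(\phi\xi)$ and $(n-1)(1-\phi_{s}^{2})/\phi^{2}$ that you already have --- but it must be said. With that settled, the finitely many Harnack steps (finitely many because $T<\infty$ and the cylinders have fixed parabolic size by uniform ellipticity) do deliver $\phi_{s}(\tilde{x},t)\geq\tilde{\beta}>0$.
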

\begin{proof}[Proof of Claim \ref{claim1}] 
Let $x > \rho_{0}$; once we fix an angle $\theta\in S^{n}$ we can extend a local $\hat{g}$-orthonormal frame $\{e_{i}\}$ on $S^{n}$ to a $g(t)$-orthonormal frame around $p = (x,\theta)$ of the form $\{\partial_{s},e_{i}/\phi\}$ for any $t\in [0,T)$. Using the commutator formula \eqref{commutatorformula} and the Koszul formula we find
\begin{align}
\partial_{t}(\partial_{s}\phi)(p,t) &= \partial_{s}\left(-\text{Ric}_{g(t)}\left(\frac{e_{i}}{\phi},\frac{e_{i}}{\phi}\right)\phi\right)(p,t) + nK\phi_{s}(p,t) \notag \\ &= -\text{Ric}_{g(t)}\left(\frac{e_{i}}{\phi},\frac{e_{i}}{\phi}\right)\phi_{s}(p,t) - \left(\text{Ric}_{g(t)}\left(\frac{e_{i}}{\phi},\frac{e_{i}}{\phi}\right)\right)_{s}\phi(p,t) + nK\phi_{s}(p,t) \notag \\ &= -\text{Ric}_{g(t)}\left(\frac{e_{i}}{\phi},\frac{e_{i}}{\phi}\right)\phi_{s}(p,t) -\nabla_{g(t)}\text{Ric}_{g(t)}\left(\partial_{s},\frac{e_{i}}{\phi},\frac{e_{i}}{\phi}\right)\phi(p,t) + nK\phi_{s}(p,t). \label{firstderivativelipschitz}
\end{align}
\noindent Since $x > \rho_{0}$ there exists $\gamma = \gamma(x) > 0$ such that $\phi(x,t)\geq \gamma > 0$ as long as the solution exists. From Lemma \ref{controlinfinitelimit} and Shi's derivative estimates we deduce that $(\lvert\text{Rm}_{g(t)}\rvert + \lvert\nabla_{g(t)}\text{Rm}_{g(t)}\rvert)(x,t) \leq C(x) < \infty$ uniformly in $[0,T)$. Therefore we can bound the right hand side of \eqref{firstderivativelipschitz} by a uniform positive constant only depending on $x$, thus obtaining that $\phi_{s}(x,\cdot)$ is a Lipschitz function of time on $[0,T)$. A consequence of this fact is that $\phi_{s}(x,\cdot)$ admits a (finite) limit as $t\nearrow T$ for any $x > \rho_{0}$. 
\\Assume for a contradiction that any such limit is zero. Since the curvature is controlled in the annular region $(x_{0},x_{1})\times S^{n}$, by standard distortion estimates of the distance we get
\begin{align*}
\delta &\leq \phi(x_{1},t) - \phi(x_{0},t) \leq \sup_{[x_{0},x_{1}]}\phi_{s}(\cdot,t)(s(x_{1},t)-s(x_{0},t))\\ &\leq C\,\sup_{[x_{0},x_{1}]}\phi_{s}(\cdot,t)(s(x_{1},0)-s(x_{0},0))\leq C\,\sup_{[x_{0},x_{1}]}\phi_{s}(\cdot,t),
\end{align*}
\noindent for any $t\in[0,T)$. Since we have seen that $\phi_{ss}$ is uniformly bounded in $[x_{0},x_{1}]\times[0,T)$ we conclude that $\sup_{[x_{0},x_{1}]}\phi_{s}(\cdot,t)\rightarrow 0$ as $t\nearrow T$ as long as $\phi_{s}(x,t)\rightarrow 0$ for any $x > \rho_{0}$; that is a contradiction. Thus there exists $\tilde{x} > \rho_{0}$ such that $\lim_{t\nearrow T}\phi_{s}(\tilde{x},t) \neq 0$; by Lemma \ref{nominimalspheres} we deduce that there exists $\beta > 0$ as in the statement of the Claim. 
\end{proof}
From the boundary conditions \eqref{boundaryconditions} and Claim \ref{claim1} we derive that if $\phi_{s}$ approaches zero in $B(\origin,\tilde{x})$ as $t\nearrow T$, then this must happen along a sequence of interior minima; however, the maximum principle applied to the evolution equation \eqref{equationfirstderivative} shows that this is not possible. We thus find $\beta > 0$ such that for any $t\in [0,T)$ the following holds:
\begin{equation}\label{lowerboundphis}
\inf_{B(\origin,\tilde{x})}\phi_{s}(\cdot,t) \geq \beta > 0.
\end{equation}
\noindent Consider a standard parabolic rescaling $g_{j}(t)$ along a blow-up sequence $(p_{j},t_{j})$ converging smoothly on compact sets to a singularity model $(M_{\infty},g_{\infty}(t),p_{\infty})_{t\in (-\infty,\omega]}$. 
\noindent We need to show that the rescaled geodesic balls stay inside $B(\origin,\tilde{x})$ for $j$ large enough.
\begin{claim}\label{claiminclusionballs}
For any $\nu >0$ there exists $j_{0} = j_{0}(\nu)$ such that for all $j\geq j_{0}$ the following holds:
\[
B_{g_{j}(0)}(p_{j},\nu) \subset B(\origin,\tilde{x}).
\]
\end{claim}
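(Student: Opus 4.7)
The strategy is to convert the $g_{j}(0)$-ball condition into a statement about the radial $s$-coordinate at time $t_{j}$, and then use the lower bound \eqref{lowerboundphis} on $\phi_{s}$ together with the scale-invariant curvature bound from Lemma \ref{controlinfinitelimit} to rule out that this $s$-coordinate ever reaches $s(\tilde{x},t_{j})$ once $j$ is large. Any $q=(x,\theta)\in B_{g_{j}(0)}(p_{j},\nu)$ satisfies $d_{g(t_{j})}(p_{j},q)<\nu/\sqrt{\lambda_{j}}$ where $\lambda_{j}=\lvert \text{Rm}_{g(t_{j})}\rvert(p_{j})\to\infty$, so projecting onto the radial direction gives $\lvert s(x,t_{j})-s(x_{j},t_{j})\rvert<\nu/\sqrt{\lambda_{j}}\to 0$. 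It therefore suffices to prove that the radial distance $s(\tilde{x},t_{j})-s(x_{j},t_{j})$ stays bounded below by a positive constant independent of $j$.

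First I would locate the blow-up points. Outside $B(\origin,\rho_{1})$ the curvature is uniformly $\leq \epsilon$ by Lemma \ref{controlinfinitelimit}, while $\lambda_{j}\to\infty$, so $p_{j}\in B(\origin,\rho_{1})$ for $j$ large. The scale-invariant estimate $\phi^{2}\lvert \text{Rm}_{g(t)}\rvert\leq C$ on $B(\origin,\rho_{1})\times [0,T)$ then forces $\phi(x_{j},t_{j})\leq \sqrt{C/\lambda_{j}}\to 0$. Since $\tilde{x}>\rho_{0}$, by the definition of $\rho_{0}$ and Lemma \ref{existlimit} we have $\phi(\tilde{x},t_{j})\to \phi(\tilde{x},T)>0$, and the monotonicity $\phi_{s}\geq 0$ from Lemma \ref{nominimalspheres} already yields $x_{j}<\tilde{x}$ for $j$ large.

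For the lower bound on the $s$-distance I would write
\[
s(\tilde{x},t_{j})-s(x_{j},t_{j})=\int_{\phi(x_{j},t_{j})}^{\phi(\tilde{x},t_{j})}\frac{d\phi}{\phi_{s}}.
\]
Combining $\phi_{s}^{2}=1-\phi^{2}L$ with $\phi^{2}\lvert L\rvert\leq C$ on $B(\origin,\rho_{1})$ yields a uniform upper bound $\phi_{s}\leq C^{\prime}$ on $[x_{j},\tilde{x}]\times\{t_{j}\}$, so that
\[
s(\tilde{x},t_{j})-s(x_{j},t_{j})\geq \frac{\phi(\tilde{x},t_{j})-\phi(x_{j},t_{j})}{C^{\prime}}\longrightarrow \frac{\phi(\tilde{x},T)}{C^{\prime}}>0.
\]
For $j$ large enough one therefore has $\nu/\sqrt{\lambda_{j}}<s(\tilde{x},t_{j})-s(x_{j},t_{j})$, which forces every $q=(x,\theta)\in B_{g_{j}(0)}(p_{j},\nu)$ to satisfy $x<\tilde{x}$. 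The main obstacle is the uniform two-sided control of $\phi_{s}$ along the sequence: the lower bound comes from \eqref{lowerboundphis} (whose proof was the preceding step), and the upper bound is extracted from the scale-invariant estimate of Lemma \ref{controlinfinitelimit}; once both are in hand the inclusion is immediate.
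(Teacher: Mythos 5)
Your proof is correct, but it takes a genuinely different route from the paper. The paper argues by contradiction through the singularity model: if a point $q_{j}=(y_{j},\theta_{j})$ with $y_{j}>\tilde{x}$ lay in $B_{g_{j}(0)}(p_{j},\nu)$, then since $\phi(y_{j},t_{j})$ is bounded below the unrescaled curvature at $q_{j}$ is uniformly bounded, so the rescaled curvature tends to zero; passing to the Cheeger--Gromov limit, $R_{g_{\infty}(0)}$ vanishes somewhere, and the strong maximum principle together with the nonnegativity of the curvature operator (Lemma \ref{characterizationsingularitymodels}) forces $g_{\infty}(0)$ to be flat, contradicting non-flatness of singularity models. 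Your argument is more elementary and entirely quantitative: you never invoke the limit or the maximum principle, instead comparing the shrinking intrinsic radius $\nu/\sqrt{\lambda_{j}}$ of the rescaled ball with a uniform positive lower bound on $s(\tilde{x},t_{j})-s(x_{j},t_{j})$, obtained from $\phi(x_{j},t_{j})\to 0$, the positive lower bound on $\phi(\tilde{x},t_{j})$, and the upper bound $\phi_{s}^{2}\leq 1+C$ coming from the scale-invariant estimate; the $1$-Lipschitz property of the radial coordinate then yields the inclusion. This buys a self-contained proof that does not depend on the classification of singularity models, at the cost of being specific to the rotationally symmetric coordinate description. One small inaccuracy in your closing remark: the lower bound \eqref{lowerboundphis} on $\phi_{s}$ is not actually needed anywhere in your chain of inequalities --- only the upper bound on $\phi_{s}$ enters, since you are lower-bounding $ds/d\phi=1/\phi_{s}$; the lower bound \eqref{lowerboundphis} is what the subsequent volume estimate in the proof of Theorem \ref{maintheoremnospheresimmortal} relies on.
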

\begin{proof}[Proof of Claim \ref{claiminclusionballs}] Assume for a contradiction that there exist $\nu > 0$ and a subsequence $q_{j} = (y_{j},\theta_{j})$ such that $q_{j}\in B_{g_{j}(0)}(p_{j},\nu)$ and $y_{j} > \tilde{x}$. By the Cheeger-Gromov-Hamilton convergence $\Phi_{j}^{-1}(q_{j})\in B_{g_{\infty}(0)}(p_{\infty},2\nu)$ for $j$ large enough. Therefore by Lemma \ref{controlinfinitelimit} we find that $R_{g_{\infty}(0)}$ vanishes at some $q_{\infty}\in B_{g_{\infty}(0)}(p_{\infty},2\nu)$; since the singularity model has nonnegative curvature operator (Lemma \ref{characterizationsingularitymodels}) by a standard application of the maximum principle we deduce that $g_{\infty}(0)$ is flat.
\end{proof}
Let $\nu > \bar{\nu}$ with $\bar{\nu}$ given in Claim \ref{claimexistenceuniformnu}. Consider the annular region $V(j,\nu)$ defined as in \eqref{definitionVj}; we let $\mu_{j} > 0$ be the positive quantity satisfying 
\[
s(x_{j}+\mu_{j},t_{j})-s(x_{j},t_{j}) \equiv \int_{x_{j}}^{x_{j}+\mu_{j}}\xi(x,t_{j})dx = \frac{\nu}{\sqrt{\lambda_{j}}}.
\]
\noindent Assume that $j$ is large enough such that the inclusion in Claim \ref{claiminclusionballs} is verified; equivalently, we have $x_{j} + \mu_{j} \leq \tilde{x}$. From Lemma \ref{nominimalspheres} and the lower bound \eqref{lowerboundphis} it follows that
\[
\phi(x,t_{j}) = \int_{0}^{x}\phi_{x}(y,t_{j})dy \geq \int_{x_{j}}^{x}(\phi_{s}\xi)(y,t_{j})dy \geq \beta (s(x,t_{j}) - s(x_{j},t_{j})),
\]
\noindent for any $x\in (x_{j},x_{j}+\mu_{j})$. We thus obtain 

\begin{align}
\text{Vol}_{g(t_{j})}(V(j,\nu)) & \geq \text{Vol}_{g(t_{j})}(S^{n}\times (x_{j},x_{j}+\mu_{j})) = C(n)\int_{x_{j}}^{x_{j}+\mu_{j}}(\phi^{n}\xi)(x,t_{j})dx \notag \\ &\geq C(n)\int_{x_{j}}^{x_{j}+\mu_{j}}\beta^{n}(s(x,t_{j}) - s(x_{j},t_{j}))^{n}\xi(x,t_{j})dx \notag \\ &= C(n)(s(x_{j}+\mu_{j},t_{j}) - s(x_{j},t_{j}))^{n+1} = C(n)\frac{\nu^{n+1}}{\lambda_{j}^{\frac{n+1}{2}}}, \label{lastalignforglobal}
\end{align}
\noindent for some positive constant $C$ independent of $j$ that we have renamed from line to line.
\\ We finally conclude that for any $\nu \geq \bar{\nu}$, with $\bar{\nu}$ defined in Claim \ref{claimexistenceuniformnu}, by the Cheeger-Gromov-Hamilton convergence and \eqref{lastalignforglobal} there exists $j$ large enough satisfying 
\[
\text{Vol}_{g_{\infty}(0)}B_{g_{\infty}(0)}(p_{\infty},4\nu) \geq \text{Vol}_{g_{j}(0)}B_{g_{j}(0)}(p_{j},2\nu)\geq \text{Vol}_{g_{j}(0)}V(j,\nu) \geq C(n)\nu^{n+1}.
\]
\noindent The last inequality implies that $g_{\infty}(0)$ has Euclidean volume growth: equivalently, the asymptotic volume ratio of $g_{\infty}(0)$ is positive. Since we have already shown that any singularity model is a $\kappa$-solution (see Lemma \ref{characterizationsingularitymodels}), by \cite[Proposition 11.4]{pseudolocality} we derive the contradiction; thus the solution is immortal.
\end{proof}
\section{Rotationally symmetric Ricci flow with nonnegative curvature}
In this section we classify rotationally symmetric Ricci flows with bounded nonnegative Ricci tensor; we also comment on some properties satisfied by standard solutions in the sense of Lu and Tian.
\\Let $g_{0}$ be a complete rotationally symmetric metric on $\R^{n+1}$ with bounded nonnegative Ricci curvature and consider the maximal Ricci flow solution $(\R^{n+1},g(t))_{0\leq t < T}$ evolving from $g_{0}$. Since by rotational symmetry the Ricci tensor is nonnegative if and only if the curvature operator is nonnegative, we can apply Hamilton's strong maximum principle \cite{positivecurvature} and conclude that $\text{Ric}_{g(t)} > 0$ for all $t\in (0,T)$ because the curvature operator of $g_{0}$ has non trivial kernel at any point if and only if $g_{0}$ is flat. We first report the following result, which allows to compute the maximal time of existence for positively curved solutions that are asymptotic to a round cylinder at spatial infinity.
\begin{proposition}[Chen and Zhu \cite{chenzhu}, Theorem A.1]\label{propchenzhu} Let $(\R^{n+1},g(t))_{0\leq t < T}$ be the maximal Ricci flow solution evolving from a complete metric $g_{0}$ with bounded nonnegative curvature operator, positive scalar curvature and which is asymptotic to a round cylinder of radius $r_{0}$ at spatial infinity. Then the solution satisfies: 
\[ 
\inf_{p\in\R^{n+1}}R_{g(t)}(p)\geq \frac{C}{T-t},
\]
\noindent for some $C > 0$, where $T = \frac{r_{0}^{2}}{2(n-1)}$.
\end{proposition}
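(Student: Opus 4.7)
\bigskip

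\noindent\textbf{Proof proposal.} My plan is to establish two things in sequence: (i) the maximal time of existence is exactly $T = r_{0}^{2}/(2(n-1))$, and (ii) the scalar curvature satisfies the claimed Type-I lower bound. For the upper bound on $T$, I would apply the maximum principle to the evolution equation $(\phi^{2})_{t} = \Delta \phi^{2} - 4\phi_{s}^{2} - 2(n-1)$ derived in the proof of Lemma \ref{controlfinitelimit}, yielding $\sup_{\R^{n+1}}\phi^{2}(\cdot,t)\leq r_{0}^{2} - 2(n-1)t$ and hence $T\leq r_{0}^{2}/(2(n-1))$. Since the curvature operator is nonnegative, Hamilton's strong maximum principle implies $\text{Ric}_{g(t)} > 0$ for $t>0$, and in particular $\phi_{s}(\cdot,t)>0$ by Lemma \ref{nominimalspheres} (one checks the hypothesis is met because $g_{0}$ is asymptotic to a cylinder and has no minimal hyperspheres). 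The reverse inequality $T\geq r_{0}^{2}/(2(n-1))$ follows from the fact that at spatial infinity the solution is asymptotic to the shrinking cylinder of radius $\sqrt{r_{0}^{2}-2(n-1)t}$, so the curvature stays bounded at infinity as long as $t<r_{0}^{2}/(2(n-1))$; combined with Lemma \ref{crucialestimateinterior} (using that the scale-invariant bound $\phi^{2}\lvert\text{Rm}\rvert\leq C$ propagates inward when available on the parabolic boundary), no singularity can form before $T=r_{0}^{2}/(2(n-1))$.

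Having identified $T$, I would derive the asymptotic profile of $R$ at spatial infinity. Because $\phi_{s}$ is nonnegative and concave (since $\phi_{ss}\leq 0$ from $K\geq 0$), and $\phi$ is bounded by $r_{0}$, one obtains $\phi(x,t)\to\sqrt{r_{0}^{2}-2(n-1)t}$ and $\phi_{s}(x,t)\to 0$ as $x\to\infty$ for each fixed $t$. Using \eqref{scalarcurvature} this gives the asymptotic
\begin{equation*}
\lim_{x\to\infty}R_{g(t)}(x) \;=\; \frac{n(n-1)}{r_{0}^{2}-2(n-1)t} \;=\; \frac{n}{2(T-t)}.
\end{equation*}
This already shows the infimum of $R$ is controlled by $n/(2(T-t))$ from above; the point is to produce a matching lower bound of the form $C/(T-t)$ for the spatial infimum.

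For the lower bound on $R_{\min}(t)\doteq \inf_{\R^{n+1}}R_{g(t)}$, I would use the evolution equation $R_{t}=\Delta R + 2\lvert\text{Ric}\rvert^{2}$ and the Cauchy-Schwarz-type inequality $\lvert\text{Ric}\rvert^{2}\geq R^{2}/(n+1)$ (valid whenever $\text{Ric}\geq 0$), which gives $R_{t}\geq \Delta R + \tfrac{2}{n+1}R^{2}$. Applying the maximum principle to compare with the corresponding ODE $\dot{u}=\tfrac{2}{n+1}u^{2}$: picking any $t_{0}\in(0,T)$ where by the strong maximum principle $R_{\min}(t_{0})>0$, the ODE comparison yields
\begin{equation*}
R_{\min}(t)\;\geq\;\frac{1}{R_{\min}(t_{0})^{-1}-\tfrac{2(t-t_{0})}{n+1}} \qquad \text{for } t\in[t_{0},T).
\end{equation*}
Since the solution becomes singular precisely at $T$, the denominator must vanish no later than $T$, which forces $R_{\min}(t_{0})^{-1}\leq \tfrac{2(T-t_{0})}{n+1}$ and produces a bound of the form $R_{\min}(t)\geq C/(T-t)$ on $[t_{0},T)$.

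The main obstacle, I anticipate, is rigorously handling the case where $R_{\min}(t)$ is \emph{not} attained but only approached at spatial infinity, since then the classical maximum principle for the ODE comparison does not apply verbatim. One way around this is to combine the ODE bound on any interior minimum with the explicit asymptotic $R\to n/(2(T-t))$ at infinity: either the spatial infimum lies in a compact region, in which case the ODE argument goes through with Ecker-Huisken style cutoffs, or the infimum migrates to infinity and then the asymptotic value itself provides the lower bound $n/(2(T-t))$. A cleaner alternative is to invoke Hamilton's trace differential Harnack inequality (available since the curvature operator is nonnegative and bounded), which propagates pointwise lower bounds for $R$ from spatial infinity to the interior in a quantitative way and directly yields the Type-I lower bound from the known asymptotic profile.
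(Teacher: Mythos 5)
First, note that the paper itself offers no proof of this proposition: it is imported verbatim from Chen--Zhu \cite{chenzhu}, so there is no internal argument to compare against, and your proposal must be judged on its own terms. The decisive step --- the lower bound on $R_{\min}(t)\doteq\inf_{\R^{n+1}}R_{g(t)}$ --- is logically reversed. Comparing $R_{t}\geq\Delta R+\tfrac{2}{n+1}R^{2}$ with the ODE $\dot{u}=\tfrac{2}{n+1}u^{2}$, $u(t_{0})=R_{\min}(t_{0})$, gives $R_{\min}(t)\geq u(t)$, and $u$ blows up at $t^{\ast}=t_{0}+\tfrac{n+1}{2}R_{\min}(t_{0})^{-1}$. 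Since the flow is smooth with bounded curvature on $[0,T)$, $R_{\min}$ is finite there, so necessarily $t^{\ast}\geq T$; this yields the \emph{upper} bound $R_{\min}(t_{0})\leq\tfrac{n+1}{2(T-t_{0})}$. Your assertion that ``the denominator must vanish no later than $T$'' is equivalent to $R_{\min}(t_{0})\geq\tfrac{n+1}{2(T-t_{0})}$ and would require knowing that $R$ blows up \emph{everywhere} as $t\nearrow T$ --- which is precisely the content of the proposition; the fact that the flow becomes singular at $T$ only says $\sup\lvert\mathrm{Rm}\rvert\to\infty$ somewhere, so the argument is circular. The proposed fallbacks do not repair this: the integrated trace Harnack inequality propagates lower bounds for $R$ \emph{forward in time} with a factor $\exp\left(-d_{t_{1}}(x_{1},x_{2})^{2}/(2(t_{2}-t_{1}))\right)$, which degenerates completely when the reference point is sent to spatial infinity (and the distance from the origin to the genuinely cylindrical region need not be bounded), while the dichotomy ``infimum attained in a compact set vs.\ at infinity'' feeds back into the same reversed ODE inequality.

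There is also a gap in the identification of $T$. The maximum principle applied to $(\phi^{2})_{t}=\Delta\phi^{2}-4\phi_{s}^{2}-2(n-1)$ does give $T\leq r_{0}^{2}/(2(n-1))$, but the reverse inequality does not follow from curvature control at spatial infinity together with Lemma \ref{crucialestimateinterior}: the scale-invariant bound $\phi^{2}\lvert\mathrm{Rm}_{g(t)}\rvert_{g(t)}\leq C$ is perfectly consistent with curvature blow-up at the origin, where $\phi\to 0$, before the asymptotic cylinder has collapsed. Ruling out such an early cap singularity, and simultaneously proving that $R$ diverges uniformly in space, is the substantive content of Chen and Zhu's appendix, where it is handled by a blow-up/compactness contradiction argument using $\kappa$-solutions and the Harnack inequality rather than by pointwise barriers. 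As written, your proposal establishes only $T\leq r_{0}^{2}/(2(n-1))$ and $R_{\min}(t)\leq\tfrac{n+1}{2(T-t)}$, neither of which is the claimed statement.
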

We may now address the proof of Corollary \ref{classificationpositive}.  
\begin{proof}[Proof of Corollary \ref{classificationpositive}]
Given $ t > 0$ we have $K(\cdot,t) > 0$, which is equivalent to $\phi_{ss}(\cdot,t) < 0$ on $\R^{n+1}\setminus\{\origin\}$; the last condition implies $0 < \phi_{s}(\cdot,t) \leq 1$ because $g(t)$ is a complete metric\footnote{We note that the strict inequality $\phi_{s}(\cdot,t) > 0$ for any $t\in (0,T)$ follows from the boundary conditions \eqref{boundaryconditions} and the real-analyticity of the Ricci flow \cite{bando}.}. Therefore $(\R^{n+1},g(t))$ does not contain minimal embedded hyperspheres for any $t\in(0,T)$. 
We then need to consider two different cases, depending on whether the radius $\phi(\cdot,t)$ is bounded or unbounded.
\\ \emph{The radius is bounded.} Suppose that $\phi(x,0)\rightarrow r_{0} < \infty$. The same argument for the proof of (ii) in Lemma \ref{controlfinitelimit} shows that the solution is smoothly asymptotic to a round cylinder at spatial infinity for any $t\in (0,T)$. From Proposition \ref{propchenzhu} we derive that the solution develops a global singularity at $T = r_{0}^{2}/2(n-1)$. We finally apply Theorem \ref{maintheoremnospheres} to deduce that the singularity is Type-II and is modelled on the Bryant soliton once suitably dilated. 
\\ \emph{The radius is unbounded.} If $\phi(x,0)\rightarrow \infty$ as $x\rightarrow \infty$ then $\phi(x,t)\rightarrow \infty$ for any $t\in [0,T)$ because the curvature stays bounded until time $T$ by \cite{shi}. Let us fix $t_{0} > 0$; since $0 < \phi_{s}(\cdot,t_{0})\leq 1$ we get $\lvert L(\cdot,t_{0})\rvert \rightarrow 0$ as $x\rightarrow\infty$. Furthermore $\phi_{s}(\cdot,t_{0})$ has a (finite) limit at infinity being $\phi_{ss}(\cdot,t_{0})\leq 0$; it follows that $\phi_{ss}(\cdot,t_{0})$ and hence $K(\cdot,t_{0})$ are integrable. Since by Shi's derivative estimates $\lvert K_{s}(\cdot,t_{0})\rvert\leq C$ we obtain $K(x,t_{0})\rightarrow 0$ as $x\rightarrow \infty$. We have thus shown that $\lvert \text{Rm}_{g(t_{0})}\rvert_{g(t_{0})} \rightarrow 0$ at infinity; by applying Theorem \ref{maintheoremnospheresimmortal} we conclude that the solution is immortal. 
\end{proof}
\subsection{Standard solutions to the Ricci flow.} 
In the following we relate the classification in Corollary \ref{classificationpositive} to the family of standard solutions introduced by Lu and Tian in \cite{peng}, where they generalized Perelman's class of special solutions discussed in \cite{perelmansurgery}. 
According to \cite{peng}, we have the following:
\begin{definition}\label{definitioninitialstandard}
We let $\G$ be the set of (smooth) complete rotationally symmetric metrics $g$ on $\R^{n+1}$ satisfying
\begin{itemize}
\item[(i)] There exists a sequence of points $p_{j}\rightarrow \infty$ in $\R^{n+1}$ such that $(\R^{n+1},g,p_{j})$ converges to the round cylinder $(\R \times S^{n}, (dx)^{2} + r^{2}\hat{g},p^{\ast})$ in pointed $C^{3}$ Cheeger-Gromov topology, for some $r > 0$.
\item[(ii)] $\text{Rm}_{g} \geq 0$ everywhere and $\text{Rm}_{g}(p) > 0$ for some $p\in\R^{n+1}$.
\item[(iii)] There exists $\alpha > 0$ such that on $\R^{n+1}$ 
\[
\lvert\text{Rm}_{g}\rvert_{g} + \sum_{k = 1}^{4}\lvert \nabla^{k} \text{Rm}_{g}\rvert_{g} \leq \alpha.
\]
\end{itemize}
\end{definition}
\begin{remark}\label{remarkarctan} To prove that $\G$ is non-empty and to get a feeling for the metrics contained in this set, we consider $g$ of the form \eqref{rotationallysymmetrictime0} with $\phi(s) = \arctan(s)$. We first verify that $\phi$ satisfies \eqref{boundaryconditions} so that $g$ is smooth, complete, and with bounded curvature on any compact region. Conditions (ii) and (iii) of Definition \ref{definitioninitialstandard} follow from the formulas for the sectional curvatures \eqref{formulasforKandL}. For what concerns the convergence to the round cylinder of radius $r = \pi/2$, we introduce the exhaustion $U_{j} \doteq (-j,\infty)\times S^{n}$ and the family of translations $\Phi_{j}:U_{j}\rightarrow (j,\infty)\times S^{n}\doteq V_{j}$ defined by $s\mapsto s + 2j$. Once we fix an angle $\theta\in S^{n}$, the embeddings satisfy $\Phi_{j}(0,\theta) = (2j,\theta)$ for any $j$. If we denote the points $(2j,\theta)$ by $p_{j}$ and $(0,\theta)$ by $p^{\ast}$, property (i) in Definition \ref{definitioninitialstandard} is then equivalent to showing that 
\[g|_{V_{j}} \xrightarrow{C^{3}}  g_{\text{cyl}}, \,\,\,\,\,\,\,\,\,\ g_{\text{cyl}} = (dx)^{2} + (\pi^{2}/4)\hat{g}.\]
\noindent Finally such convergence follows from the fact that $\lvert \partial_{s}^{k}\text{arctan}(s)\rvert\rightarrow 0$ uniformly in $V_{j}$ as $j\rightarrow \infty$, for any $k = 1,2,3$.
\end{remark}
Since any $g_{0}\in\G$ is complete with bounded curvature there exists a solution to the Ricci flow starting at $g_{0}$ \cite{shi}; moreover, such solution is unique in the class of complete solutions with bounded curvature on compact subintervals \cite{uniqueness}. As a consequence of that, we may give the following definition, due to Lu and Tian \cite{peng}.
\begin{definition}\label{standardsolutions}
Let $g_{0}\in\G$. The maximal Ricci flow solution starting at $g_{0}$ is called a \emph{standard solution}.
\end{definition}
We first report a result by Lu and Tian which shows that $\G$ is closed with respect to the Ricci flow problem. In the following we let $p_{j}$ and $r$ be as in Definition \ref{definitioninitialstandard}.
\begin{lemma}[Lu and Tian \cite{peng}, Lemma 1]\label{penglemma}
Let $(\R^{n+1},g(t))_{0\leq t < T}$ be the maximal solution to the Ricci flow starting at some $g_{0}\in\G$. For any $T^{\prime}\in (0,T)$ there exists a subsequence of $(\R^{n+1},g(t),p_{j})_{0\leq t \leq T^{\prime}}$ which converges in $C^{3}$ Cheeger-Gromov pointed topology to a self-similar shrinking cylinder
\[
g_{\emph{cyl}}(t) = (ds)^{2} + (r^{2} - 2(n-1)t)\hat{g}.
\]
\end{lemma}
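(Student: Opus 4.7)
The plan is to derive the convergence from Hamilton's compactness theorem for pointed Ricci flows, combined with the uniqueness of complete bounded-curvature Ricci flow solutions.

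First, I would establish uniform bounds on the curvature and its derivatives along the whole standard solution. Since $g_{0}\in \G$ has bounded curvature by property (iii) of Definition \ref{definitioninitialstandard}, by \cite{shi} the evolving solution $g(t)$ keeps bounded curvature on $[0,T^{\prime}]$ for any $T^{\prime} < T$, and Shi's higher-derivative estimates yield uniform bounds
\[
\sup_{\R^{n+1}\times [0,T^{\prime}]} \lvert \nabla^{k}\text{Rm}_{g(t)}\rvert_{g(t)} \leq C_{k}(g_{0},T^{\prime})
\]
for every $k\geq 0$.

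Next, I would secure a uniform lower bound on the injectivity radius at the base points $p_{j}$. Property (i) in Definition \ref{definitioninitialstandard} gives $(\R^{n+1},g_{0},p_{j})\to (\R\times S^{n}, (ds)^{2}+r^{2}\hat{g}, p^{\ast})$ in pointed $C^{3}$ Cheeger-Gromov topology, and since the round cylinder has positive injectivity radius at $p^{\ast}$, there exists $\iota>0$ with $\text{inj}_{g_{0}}(p_{j})\geq \iota$ for all $j$ sufficiently large. Combined with the curvature control above, standard distance distortion estimates then propagate this bound (up to uniform constants) to every $t\in [0,T^{\prime}]$. With these uniform bounds in hand, I would apply Hamilton's compactness theorem for pointed Ricci flows to extract a subsequence of $(\R^{n+1},g(t),p_{j})_{0\leq t\leq T^{\prime}}$ converging in pointed $C^{3}$ Cheeger-Gromov topology to a complete bounded-curvature Ricci flow $(\tilde{M},\tilde{g}(t),\tilde{p})_{0\leq t\leq T^{\prime}}$.

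Finally, I would identify the limit with the self-similar shrinking cylinder. By construction $\tilde{g}(0)$ is isometric to the round cylinder of radius $r$. Since $T^{\prime} < T = r^{2}/(2(n-1))$ by Proposition \ref{propchenzhu}, the explicit family $g_{\text{cyl}}(t) = (ds)^{2}+(r^{2}-2(n-1)t)\hat{g}$ is a complete bounded-curvature Ricci flow on $\R\times S^{n}$ defined on all of $[0,T^{\prime}]$ and matching the initial data of $\tilde{g}$. The uniqueness of complete Ricci flows with bounded curvature on compact subintervals \cite{uniqueness} then forces $\tilde{g}(t) = g_{\text{cyl}}(t)$, yielding the claim. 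The main technical hurdle I anticipate is propagating the injectivity radius lower bound from $t=0$ to the full interval $[0,T^{\prime}]$ uniformly in $j$; this is routine given the uniform curvature and derivative estimates above, but it requires carefully combining the initial $C^{3}$ convergence with distance distortion along the flow. The identification of the limit relies crucially on Proposition \ref{propchenzhu} to ensure the cylinder solution exists on all of $[0,T^{\prime}]$.
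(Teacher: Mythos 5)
The paper offers no proof of Lemma \ref{penglemma}: it is quoted from Lu and Tian \cite{peng} as an external result, so there is no internal argument to compare against. Your proposal --- uniform bounds on $\mathrm{Rm}$ and its derivatives from \cite{shi} together with condition (iii) of Definition \ref{definitioninitialstandard}, an injectivity radius lower bound at the basepoints $p_{j}$ inherited from the $C^{3}$ convergence in condition (i), Hamilton's compactness theorem, and identification of the limit with the shrinking cylinder via the uniqueness theorem \cite{uniqueness} --- is the standard proof of this statement and is correct. Two minor remarks: Hamilton's compactness theorem for flows only requires the injectivity radius bound at the basepoints at a single time slice (here $t=0$), so the propagation of that bound to all of $[0,T^{\prime}]$, which you single out as the main technical hurdle, is not actually needed; and the inequality $2T(n-1)\leq r^{2}$ that guarantees the cylinder solution survives past any $T^{\prime}<T$ follows already from the elementary barrier argument in (i) of Lemma \ref{controlfinitelimit}, so the appeal to Proposition \ref{propchenzhu} can be avoided if one wishes to keep the logical dependencies lighter.
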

The previous Lemma, Shi's derivative estimates \cite{shi} and Hamilton's strong maximum principle for systems \cite{positivecurvature} imply the following
\begin{corollary}\label{standardsolutionspreserved}
Let $(\R^{n+1},g(t))_{0\leq t < T}$ be the maximal solution to the Ricci flow starting at some $g_{0}\in\G$. Then $g(t)\in\G$ for any $t\in [0,T)$.
\end{corollary}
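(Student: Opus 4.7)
The plan is to verify that each of the three defining conditions of $\G$ is preserved at every time slice $t \in [0, T)$. Since $g_0 \in \G$ the case $t = 0$ is immediate, so I focus on $t > 0$ and handle the conditions one at a time.

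Condition (i) follows by taking a time slice of Lemma \ref{penglemma}: for fixed $t \in (0, T)$ choose $T^{\prime} \in (t, T)$ and apply the lemma to obtain a subsequence of $(\R^{n+1}, g(s), p_{j})_{0 \le s \le T^{\prime}}$ converging in pointed $C^{3}$ Cheeger-Gromov-Hamilton topology to the shrinking cylinder $g_{\text{cyl}}(s)$. Restricting to the single time $s = t$ yields pointed $C^{3}$ convergence of $(\R^{n+1}, g(t), p_{j})$ to the round cylinder of radius $\sqrt{r^{2} - 2(n-1)t}$, which is exactly (i) for $g(t)$. Condition (iii) is a direct consequence of Shi's derivative estimates \cite{shi}: the initial bound $\lvert \text{Rm}_{g_{0}} \rvert_{g_{0}} \le \alpha$ together with the short-time existence of the flow produces uniform bounds on $\lvert \nabla^{k} \text{Rm}_{g(t)} \rvert_{g(t)}$ for $k = 1, \dots, 4$ on each compact sub-interval of $[0, T)$, so (iii) holds at time $t$ with a possibly larger constant $\alpha(t)$.

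For condition (ii), nonnegativity of the curvature operator is preserved by Hamilton's maximum principle for systems, so $\text{Rm}_{g(t)} \ge 0$ throughout $[0, T)$. To recover the strict positivity at some point, I would invoke Hamilton's strong maximum principle \cite{positivecurvature}: either $\text{Rm}_{g(t)} > 0$ everywhere for $t > 0$, or the kernel of the curvature operator forms a non-trivial parallel subbundle that forces a local splitting or K\"ahler structure; the latter alternative is ruled out by the existence at $t = 0$ of a point where the curvature operator has trivial kernel, combined with the persistence of the holonomy obstruction under the flow. In particular, $\text{Rm}_{g(t)}$ is in fact strictly positive at \emph{every} point for $t > 0$, which is considerably stronger than the requirement in (ii).

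The main obstacle is condition (ii): the preservation of nonnegativity is routine, but upgrading to strict positivity somewhere demands the strong maximum principle and a careful check that the rank-zero locus of $\text{Rm}_{g(t)}$ cannot swallow all of $\R^{n+1}$ without contradicting the initial assumption $\text{Rm}_{g_{0}}(p) > 0$. Conditions (i) and (iii) then reduce cleanly to quoting Lemma \ref{penglemma} and Shi's estimates respectively.
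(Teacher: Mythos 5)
Your proposal is correct and follows exactly the route the paper takes: the paper gives no separate proof beyond asserting that Lemma \ref{penglemma} yields condition (i), Shi's derivative estimates yield condition (iii), and Hamilton's strong maximum principle for systems yields condition (ii), which is precisely your decomposition. Your fleshed-out treatment of (ii) (nonnegativity preserved, strict positivity somewhere recovered via the strong maximum principle and the holonomy/kernel dichotomy) matches the argument the paper itself sketches at the start of Section 5.
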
 
By Proposition \ref{propchenzhu} we know that standard solutions survive until some finite time $T$ which only depends on the dimension and the asymptotic round cylinder at infinity; at time $t = T$ the solution extinguishes globally. In order to fully understand the nature of this singularity one needs to classify its type and the possible limits of blow-ups; in this regard, item (i) of Corollary \ref{classificationpositive} immediately gives us the following, which also proves the conjecture by Chow and Tian in Corollary \ref{conjecture}.
\begin{theorem}\label{theoremconjecture}
Any standard solution in the sense of Lu and Tian develops a global Type-II singularity at some finite time $T<\infty$; moreover, the singularity is modelled on the Bryant soliton once suitably dilated.
\end{theorem}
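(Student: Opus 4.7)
The plan is to deduce Theorem \ref{theoremconjecture} as an immediate consequence of Corollary \ref{classificationpositive}: I will verify that every $g_{0}\in\G$ falls under its hypotheses, so that the classification applies and yields the two desired conclusions (global Type-II singularity at some finite $T$, with the Bryant soliton as the dilated limit) directly, with no further work.

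First I would observe that conditions (ii) and (iii) of Definition \ref{definitioninitialstandard} supply the regularity input required by Corollary \ref{classificationpositive}. Indeed, $\text{Rm}_{g_{0}}\geq 0$ together with rotational symmetry is equivalent to $K_{g_{0}},L_{g_{0}}\geq 0$, so $g_{0}$ has nonnegative sectional (hence Ricci) curvature; and the uniform $C^{4}$ bound on the curvature tensor subsumes the bounded-curvature hypothesis. Second, condition (i) of Definition \ref{definitioninitialstandard} states that $(\R^{n+1},g_{0})$ converges, along a sequence $p_{j}\to\infty$ and in $C^{3}$ pointed Cheeger-Gromov topology, to a round cylinder $(\R\times S^{n},(dx)^{2}+r^{2}\hat g)$ for some $r>0$. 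Since $C^{3}$ convergence implies $C^{0}$ convergence and the rotational symmetry pins down the limit by $\phi(x,0)\to r$ as $x\to\infty$, the metric $g_{0}$ is asymptotic in $C^{0}$ to $\R\times S^{n}(r)$ in exactly the sense used in Corollary \ref{classificationpositive}.

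With both hypotheses in place, Corollary \ref{classificationpositive} (together with Proposition \ref{propchenzhu}, which provides the finiteness of $T$ and its dependence only on $n$ and $r$) tells us that the standard solution develops a global Type-II singularity at $T<\infty$ that is modelled on the Bryant soliton after suitable dilation. This is precisely the content of Theorem \ref{theoremconjecture}, and so the proof is complete. The whole argument is essentially bookkeeping and presents no substantive obstacle; the only mild point of attention is translating the $C^{3}$ pointed Cheeger-Gromov convergence in Definition \ref{definitioninitialstandard} into the $C^{0}$ asymptotic cylinder condition used in Corollary \ref{classificationpositive}, and this is automatic once one notes that in the rotationally symmetric setting the asymptotic cylinder is detected simply by $\phi(\cdot,0)\to r$.
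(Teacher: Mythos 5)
Your proposal is correct and follows essentially the same route as the paper: the paper likewise obtains the theorem by noting that any $g_{0}\in\G$ satisfies the hypotheses of Corollary \ref{classificationpositive} (nonnegative bounded curvature and $C^{0}$ cylindrical asymptotics), with Proposition \ref{propchenzhu} supplying the finite extinction time. Your added remark that the $C^{3}$ pointed convergence along $p_{j}\to\infty$ pins down $\phi(\cdot,0)\to r$ (using concavity of $\phi$ from $K\geq 0$) is a useful explicit check that the paper leaves implicit.
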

 According to \cite{peng} in the previous result we do not require standard solutions to have a well-defined tip. However we can provide a simple characterization of those standard solutions whose curvature concentrates at the origin; the conclusions of Corollary \ref{corollarypinching} can be strengthened in the case of standard solutions by using the trace of the Harnack estimate \cite{harnack} and the results in \cite{brendle} and \cite{brendle2}.
\begin{proof}[Proof of Corollary \ref{corollarybrendlino}]
Given a sequence $t_{j}\nearrow T$, since the solution has nonnegative curvature and hence the flow is only controlled by the scalar curvature, by Lemma \ref{tipregion} and the trace of the Harnack estimate we conclude that the rescaled Ricci flows $(\mathbb{R}^{n+1},g_{j}(t),\origin)$ defined on $[-R_{g(t_{j})}(\origin)t_{j},0]$ by $g_{j}(t)\doteq R_{g(t_{j})}(\origin)g(t_{j}+t/R_{g(t_{j})}(\origin))$ (sub)converge in the pointed Cheeger-Gromov topology. Lemma \ref{characterizationsingularitymodels} then implies that any limit must be a conformally flat $\kappa$-solution with positive curvature, for otherwise the cylinder would be exhausted by open sets diffeomorphic to $\R^{n+1}$. We can apply \cite{brendle} when $n = 2$ and \cite{catino2} and \cite{brendle2} when $n > 2$ as explained in Remark \ref{remarkbrendle}.
\end{proof}
\section{Ricci flow with necks}
In this section we adapt the analysis in \cite{exampleneck} to address the expectations in \cite{woolgar} discussed in the introduction; accordingly, we consider rotationally invariant asymptotically flat Ricci flows containing minimal hyperspheres. We explicitly describe a characterization of a \emph{sufficiently pinched} initial neck leading to the formation of a Type-I singularity and we provide examples of initial necks disappearing in finite time along the Ricci flow. For simplicity, in the following we discuss the case where the solution has only one neck, but the conclusions easily generalize to the case of multiple necks. 
\subsection{Type-I neckpinches.} 
Throughout this subsection we consider the maximal Ricci flow solution $g(t)$ defined on $[0,T)$, for some $T < \infty$, evolving from a rotationally symmetric asymptotically flat metric $g_{0}$; we refer to \cite{woolgar} for a detailed analysis of this condition along the flow. For our purposes it suffices to note that there exists $\epsilon > 0$ such that 
\begin{equation}\label{asymptoticflat}
x^{2+\epsilon}\lvert\text{Rm}_{g(t)}\rvert_{g(t)}\leq C(t) < \infty
\end{equation}
\noindent for some positive constant depending continuously on $t\in[0,T)$. In the following we call such flow an $\epsilon$-\emph{asymptotically flat} Ricci flow.
\begin{lemma}\label{sturmiantheoremneckpin}
Let $(\R^{n+1},g(t))_{0\leq t < T}$ be a rotationally symmetric $\epsilon$-asymptotically flat Ricci flow, for some $\epsilon > 0$ and $T < \infty$. Then there exist $\rho > 0$ and $C > 0$ such that
\[
\sup_{(\R^{n+1}\setminus B(\origin,\rho))\times [0,T)}\phi^{2 + \frac{\epsilon}{2}}\lvert\emph{Rm}_{g(t)}\rvert_{g(t)} \leq C.
\]
\end{lemma}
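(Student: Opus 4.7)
Since $|\text{Rm}_{g(t)}|_{g(t)}$ in the rotationally symmetric case is comparable to $|K|+|L|$, and $\phi^{2+\epsilon/2}L=\phi^{\epsilon/2}(1-\phi_s^2)$, $\phi^{2+\epsilon/2}K=-\phi^{1+\epsilon/2}\phi_{ss}$, it suffices to bound the two scale-weighted quantities
\[
\Psi\doteq \phi^{\epsilon/2}(1-\phi_s^2), \qquad \tilde A\doteq \phi^{\epsilon/2}A=\phi^{\epsilon/2}(\phi\phi_{ss}+1-\phi_s^2)
\]
uniformly on $(\R^{n+1}\setminus B(\origin,\rho))\times[0,T)$. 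The strategy parallels that of Lemma \ref{crucialestimateinterior}, but carried out at the refined weight $\phi^{\epsilon/2}$ prescribed by $\epsilon$-asymptotic flatness.

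First I would check that $\Psi$ and $\tilde A$ are bounded on the parabolic boundary of $(\R^{n+1}\setminus B(\origin,\rho))\times[0,T)$. On $\{t=0,\,x\geq\rho\}$ this is immediate: $\epsilon$-asymptotic flatness of $g_0$ combined with the fact that $\phi(x,0)$ is comparable to $x$ as $x\to\infty$ (a consequence of $\phi^{2}L(x,0)\to 0$, which forces $\phi_s(\cdot,0)\to 1$ at infinity) yields $\phi^{2+\epsilon/2}|\text{Rm}_{g_0}|\lesssim x^{-\epsilon/2}$, and the right-hand side is as small as desired by taking $\rho$ large. On the lateral boundary $\{x=\rho\}\times[0,T)$ I would pick an outer radius $R\gg\rho$ where the initial curvature is so small relative to $R^{2}$ that Perelman's pseudolocality \cite{pseudolocalityapplication}, together with the uniform injectivity radius of $g_{0}$ coming from the absence of closed geodesics at infinity, produces a time-uniform curvature bound on $\{x\geq R\}\times[0,T)$; combining this outer bound with Shi's derivative estimates and the evolution $(\phi^2)_t=\Delta\phi^2-4\phi_s^2-2(n-1)$ then propagates both $\phi(\rho,\cdot)$ and $|\text{Rm}_{g(t)}|(\rho,\cdot)$ to be uniformly bounded on $[0,T)$.

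Second, I would run the parabolic maximum principle on $\tilde A$ and $\Psi$ in the exterior region. From \eqref{evolutionA} and $\phi_t=\phi_{ss}-(n-1)\phi^{-1}(1-\phi_s^2)$ one obtains a PDE of the form
\[
\tilde A_t=\tilde A_{ss}+\bigl((n-4)-\epsilon\bigr)\frac{\phi_s}{\phi}\tilde A_s+\mathcal Q\,\tilde A,
\]
where $\mathcal Q$ equals $-4(n-1)\phi_s^2/\phi^2$ plus $\epsilon$-dependent correction terms of order $\phi^{-2}$; thanks to the positive lower bound of $\phi$ in the exterior (by choice of $\rho$) and the uniform bound on $|\phi_s|$ coming from Lemma \ref{crucialestimateinterior} applied at unit scale, $\mathcal Q$ is bounded above by some constant $\alpha$, and the maximum principle gives $\tilde A\leq C_0 e^{\alpha T}$ in the exterior. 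An analogous computation on $\Psi$, starting from \eqref{equationfirstderivative}, controls $\Psi$: at an interior maximum the critical-point condition $\Psi_s=0$ converts $\phi_s(\phi_s)_s$ into a multiple of $\phi^{-1}(1-\phi_s^2)$ which, substituted into $\Psi_t$, yields a nonpositive right-hand side whenever $\Psi$ exceeds the boundary bound. Combining the two estimates gives the lemma.

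The main obstacle is the lateral-boundary step. The $\epsilon$-asymptotic flatness \eqref{asymptoticflat} is only pointwise-in-time with a constant $C(t)$ that might in principle diverge as $t\nearrow T$, whereas we need a time-uniform bound at $x=\rho$. Bridging this gap requires first exploiting the smallness of the \emph{initial} curvature on a very outer annulus (where the pseudolocality scale swamps any loss of asymptotic flatness in time), and only then propagating inward; this bootstrap from infinity toward the fixed radius $\rho$ is the technical heart of the proof.
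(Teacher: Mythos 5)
Your overall strategy is sound and would close with more work, but it takes a different route from the paper and misdiagnoses where the difficulty lies. The paper runs a single maximum principle on $f\doteq\phi^{4+\epsilon}\lvert\text{Rm}_{g(t)}\rvert^{2}_{g(t)}=(\phi^{2+\epsilon/2}\lvert\text{Rm}_{g(t)}\rvert_{g(t)})^{2}$, using the universal evolution inequality for $\lvert\text{Rm}\rvert^{2}$ together with \eqref{equationphi}; at an interior maximum the gradient terms drop and one gets $\dot f_{\max}\leq Cf_{\max}$, hence a Gr\"onwall bound on $[0,T)$. You instead split the curvature into $L$ and $K$ and run two maximum principles on $\phi^{\epsilon/2}(1-\phi_s^2)$ and $\phi^{\epsilon/2}A$, in the spirit of Lemma \ref{crucialestimateinterior} but with the refined weight; this is workable (your critical-point manipulation for $\Psi$ and the drift coefficient $(n-4-\epsilon)\phi_s/\phi$ for $\tilde A$ both check out), at the cost of two computations instead of one and of having to prove \emph{two-sided} bounds on each quantity, which you only sketch for the upper bounds. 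Both arguments rest on the same external input, the pseudolocality-type result of \cite{pseudolocalityapplication} giving a \emph{time-uniform} curvature bound on the complement of some ball. This is where your "main obstacle" paragraph goes astray: $\rho$ is not fixed in advance — the lemma only asserts that \emph{some} $\rho$ works — so one simply takes $\rho$ to be the pseudolocality radius, after which the lateral boundary $\{x=\rho\}\times[0,T)$ is immediately controlled (bounded curvature there forces $\lvert\partial_t\phi^2\rvert\leq C\phi^2$, so $\phi(\rho,\cdot)$ stays bounded). No inward bootstrap from a larger radius $R$ is needed, and the possible blow-up of $C(t)$ in \eqref{asymptoticflat} as $t\nearrow T$ is harmless: it is used only to guarantee that the weighted quantity tends to zero at spatial infinity at each \emph{fixed} time, so that the maximum is attained and the ODE comparison applies.
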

\begin{proof}
Since the curvature is decaying to zero at infinity and the metric is close to the Euclidean metric, in a precise way, outside a compact region, we can apply \cite{pseudolocalityapplication} and deduce that there exists $\rho > 0$ such that $\lvert\text{Rm}_{g(t)}\rvert_{g(t)}\leq 1$ in $\R^{n+1}\setminus B(\origin,\rho)$ uniformly in time. Define $f\doteq \phi^{\alpha}\lvert\text{Rm}_{g(t)}\rvert_{g(t)}^{2}$ on the complement of $B(\origin,\rho)$ with $\alpha = 4 + \epsilon$. From \eqref{asymptoticflat} it easily follows that given $t\in [0,T)$ we have $\phi(x,t)\sim x$ for $x$ large enough \cite{woolgar}; thus we derive that $f$ is uniformly bounded along the parabolic boundary of the region. It then suffices to show that $f$ cannot diverge along a sequence of interior maxima; the evolution equation of $f$ is given by
\[
f_{t} \leq \Delta f - 4\alpha\phi^{\alpha -1}\phi_{s}\lvert\text{Rm}\rvert(\lvert\text{Rm}\rvert)_{s} + \lvert\text{Rm}\rvert^{2}\phi^{\alpha}\left(\frac{\alpha}{\phi^{2}}(-(n-1) - \alpha\phi_{s}^{2}) + C\lvert\text{Rm}\rvert \right)
\]
where we have used a standard estimate for the evolution of the curvature along the Ricci flow. At any interior maximum point $(p_{0},t_{0})$ we find 
\[
f_{t}(p_{0},t_{0}) \leq \lvert\text{Rm}\rvert^{2}\phi^{\alpha}\left(\frac{\alpha}{\phi^{2}}(-(n-1) + \alpha\phi_{s}^{2}) + C\lvert\text{Rm}\rvert \right)(p_{0},t_{0}).
\]
\noindent Therefore, as long as $T < \infty$ we find
\[
\dot{f}_{\text{max}} \leq C f_{\text{max}}
\]
\noindent where the inequality follows again from the curvature being uniformly bounded in the region\footnote{Explicitly, once we know that $\phi_{s}\rightarrow 1$ at spatial infinity as long as the solution exists (\cite{woolgar}), we can then apply the maximum principle to the evolution equation of $\phi_{s}$ as in Lemma \ref{crucialestimateinterior}.}. By integrating we obtain that $f$ is uniformly bounded as long as $T < \infty$.
\end{proof}
Let $g_{0}$ be rotationally symmetric and $\epsilon$-asymptotically flat. Adopting the same notations as in \cite{exampleneck}, we call a local maximum(minimum) for $\phi(\cdot,0)$
a bump(neck) for $g_{0}$; in the case of the minimum we never consider the origin. We say that a bump(neck) is degenerate when the spatial second derivative of $\phi$ vanishes at the maximum(minimum) point. Otherwise, the bump(neck) is referred to as nondegenerate. We note that from the boundary conditions \eqref{boundaryconditions} and the asymptotics \eqref{asymptoticflat} the existence of  a neck always implies the existence of a bump and vice versa. In the case of a single bump(neck) we are dealing with, one can use Lemma \ref{sturmiantheoremneckpin} to generalize \cite[Lemma 5.5]{exampleneck} to our setting: 
\begin{corollary}\label{morsefunction}
Let $(\R^{n+1},g(t))_{0\leq t < T}$ be a rotationally symmetric $\epsilon$-asymptotically flat Ricci flow evolving from $g_{0}$, for some $\epsilon > 0$ and $T < \infty$. Then the number of necks is nonincreasing; in particular, all necks and bumps are nondegenerate except when they annihilate each other.
\end{corollary}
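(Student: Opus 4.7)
The plan is to apply Angenent's Sturmian zero set theorem to the parabolic equation \eqref{equationfirstderivative} for $w \doteq \phi_{s}$. A neck or bump of $g(t)$ corresponds to a zero of $w(\cdot,t)$, and its nondegeneracy is equivalent to that zero being simple. The statement therefore reduces to showing that the zero set of $w(\cdot,t)$ is finite, has nonincreasing cardinality in $t$, and that the count drops strictly only when $w$ develops a multiple zero; in the single-neck setting this forces the neck and the bump to collide and annihilate simultaneously.

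The first step, and the main point of departure from \cite{exampleneck} (which was set on the compact $S^{n+1}$), is to trap all zeros of $w(\cdot,t)$ inside a fixed compact annulus uniformly in $t \in [0,T)$. Near the origin, the boundary conditions \eqref{boundaryconditions} force $w(\origin,t) = 1$, and a maximum-principle argument applied to \eqref{equationfirstderivative} on a small tube around the origin, analogous to the one used in the proof of Lemma \ref{nominimalspheres}, produces $\rho_{-} > 0$ such that $w \geq 1/2$ on $B(\origin,\rho_{-}) \times [0,T)$. Near spatial infinity, Lemma \ref{sturmiantheoremneckpin} gives $\phi^{\epsilon/2}|1 - w^{2}| \leq C$ outside a large ball; combined with the preservation of $\epsilon$-asymptotic flatness along the flow, which ensures $\phi(x,t) \to \infty$ and $w(x,t) > 0$ for large $x$ (see \cite{woolgar}), this yields $\rho_{+} > \rho_{-}$ with $w \geq 1/2$ on $(\R^{n+1} \setminus B(\origin,\rho_{+})) \times [0,T)$. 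Consequently every zero of $w(\cdot,t)$ lies in the fixed compact set $[\rho_{-},\rho_{+}]$ for all $t \in [0,T)$.

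The second step is to read \eqref{equationfirstderivative} as a linear parabolic equation for $w$ on the cylinder $[\rho_{-}/2,\, 2\rho_{+}] \times [0,T')$ for each $T' < T$, whose coefficients $(n-2)\phi_{s}/\phi$ and $(n-1)(1-\phi_{s}^{2})/\phi^{2}$ are smooth and bounded on this region (as $\phi$ is bounded away from zero there and the curvature remains bounded on $[0,T']$ by Shi's theorem), and whose lateral boundary values of $w$ are uniformly bounded below by $1/2$ thanks to the first step. Angenent's Sturmian theorem then yields the required finiteness, monotonicity, and strict-drop-only-at-multiple-zeros properties for the zero count; letting $T' \nearrow T$ proves the corollary.

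The main obstacle is precisely this uniform-in-time localisation of the zero set of $w$: without it, zeros could drift into or out of the region where Sturmian is applied, invalidating the monotonicity statement. The control at infinity is an immediate consequence of Lemma \ref{sturmiantheoremneckpin} together with the preservation of asymptotic flatness from \cite{woolgar}, whereas the control near the origin is a short local parabolic-maximum-principle computation exploiting the boundary condition $w(\origin,t)=1$.
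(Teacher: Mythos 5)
Your proposal is correct and follows essentially the same route as the paper: use Lemma \ref{sturmiantheoremneckpin} (together with $\phi\to\infty$ at spatial infinity) to force $\phi_{s}$ to stay positive outside a fixed ball, and then apply Angenent's Sturmian theorem to the evolution equation \eqref{equationfirstderivative} for $\phi_{s}$ on the remaining compact region. The only cosmetic difference is your extra maximum-principle localisation near the origin; the paper instead applies the Sturmian theorem on all of $B(\origin,x_{0})$ and lets the boundary condition $\phi_{s}\to 1$ at the origin from \eqref{boundaryconditions} serve as the nonvanishing lateral boundary datum there.
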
 
\begin{proof}
According to Lemma \ref{sturmiantheoremneckpin}, given $T < \infty$ we get
\[
\lvert 1 - \phi_{s}^{2}\rvert (x,t) \leq \frac{C}{\phi^{\frac{\epsilon}{2}}}(x,t)
\]
\noindent for some constant $C > 0$ and for any $x\geq \rho$. Since $\phi(x,0)\rightarrow \infty$ at infinity and the curvature is bounded by $1$ outside $B(\origin,\rho)$ uniformly in $[0,T)$ we may choose $x_{0} > \rho$ large enough such that $\phi_{s}(x,t) \geq \beta > 0$ for any $x\geq x_{0}$ and for any $t\in [0,T)$ with $\beta$ as close as we ask to $1$. We can then apply the Sturmian theorem \cite[Theorem D]{nodal} to the evolution equation of $\phi_{s}$ in $B(\origin,x_{0})\times [0,T)$ and conclude that the number of zeroes is nonincreasing in time and drops whenever $\phi_{s}$ has a multiple zero. 
\end{proof}
Suppose $\phi_{0}$ has one isolated maximum at $x_{\ast}(0)$ and one isolated minimum at $y_{\ast}(0)$; we denote the radius of the bump by $\phi_{\text{max}}(0)\doteq \phi(x_{\ast}(0),0)$ and the radius of the neck by $\phi_{\text{min}}(0) \doteq \phi(y_{\ast}(0),0)$. The ratio $r \doteq \phi_{\text{max}}(0)/\phi_{\text{min}}(0)$ provides then a measure for the initial pinching of the neck-like region.  
In the following we let $x_{\ast}(t)$ and $y_{\ast}(t)$ denote the radial coordinates of the bump and of the neck along the flow respectively. 
\\By Corollary \ref{morsefunction}, $\phi(\cdot,t)$ is a Morse function except at the time where the bump and the neck annihilate each other; thus both $\phi_{\text{max}}$ and $\phi_{\text{min}}$ are smooth functions of time until either they become equal or the flow develops a singularity. 
\\ The main idea for proving the existence of local Type-I singularities consists in choosing the initial ratio between $\phi_{\text{max}}$ and $\phi_{\text{min}}$ (i.e. the initial pinching of the neck) larger than some lower bound depending on the scale invariant difference between spherical sectional curvature and radial sectional curvature so that the radius of the neck vanishes at some finite time before that of the bump does. Namely, we aim to prove the following: 
\begin{theorem}\label{asymptoticsscrittobene}
Let $(\R^{n+1},g(t))$, with $n\geq 2$, be the maximal solution to the Ricci flow evolving from an asymptotically flat rotationally symmetric metric $g_{0}$ containing a neck region $(x_{\ast}(0),y_{\ast}(0))\times S^{n}$. Assume that $\emph{Ric}_{g_{0}} > 0$ on the closed Euclidean ball $B(\origin,x_{\ast}(0))$ and that $R_{g_{0}}\geq 0$ on $\R^{n+1}$. Let $\beta$ be defined as
\[
\beta \doteq \inf_{\R^{n+1}}\phi_{0}^{2}(L_{g_{0}}-K_{g_{0}}),
\]
\noindent and let $r > 0$ satisfy
\[ 
r^{2} > \frac{n+1-2\beta}{n-1} + 1.
\]
\noindent If $\phi_{0}(x_{\ast}(0))\geq r \phi_{0}(y_{\ast}(0))$ then the following are satisfied:
\begin{itemize}
\item[(i)] The Ricci flow solution develops a local Type-I singularity at some $T < \infty$. 
\item[(ii)] If we set $\sigma\doteq S/\sqrt{T-t}$, with $S$ the distance from the neck, we can write the following cylindrical asymptotics for some uniform constants $C > c > 0$:
\[
\frac{\phi}{\sqrt{2(n-1)(T-t)}} \leq 1 + C\frac{\sigma^{2}}{\lvert \log(T-t)\rvert}
\]
\noindent for $\lvert \sigma \rvert \leq c\sqrt{\lvert\log(T-t)\rvert}$, and
\[
\frac{\phi}{\sqrt{2(n-1)(T-t)}} \leq C\frac{\lvert\sigma\rvert}{\sqrt{\lvert\log(T-t)\rvert}}\sqrt{\log\left(\frac{\lvert\sigma\rvert}{\lvert\log(T-t)\rvert} \right)}
\]
\noindent whenever $c\sqrt{\lvert\log(T-t)\rvert}\leq\lvert\sigma\rvert\leq (T-t)^{-\frac{\varepsilon}{2}}$, for $\varepsilon\in (0,1)$.
\end{itemize}
\end{theorem}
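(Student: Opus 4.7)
My plan is to adapt the analysis of Angenent and Knopf \cite{exampleneck} from $S^{n+1}$ to $\R^{n+1}$, replacing their sectional-curvature pinching by the ratio condition on $r$, and then to derive the cylindrical asymptotics via the matched-asymptotics scheme of \cite{IKS1}. The first step is to track the bump and the neck along the flow. By Lemma \ref{lowerboundApersists} the lower bound $A(\cdot,t)\geq \beta$ is preserved, while the maximum principle applied to $R_{t}=\Delta R + 2\lvert\text{Ric}_{g(t)}\rvert_{g(t)}^{2}$ shows that $R_{g(t)}\geq 0$ throughout $[0,T)$. Corollary \ref{morsefunction}, together with the assumption $\text{Ric}_{g_{0}}>0$ on $\overline{B(\origin,x_{\ast}(0))}$ (which prevents the cap from developing a new critical point of $\phi$ and so keeps the bump-neck structure intact), ensures that the bump at $x_{\ast}(t)$ and the neck at $y_{\ast}(t)$ persist as nondegenerate critical points of $\phi(\cdot,t)$, so that $\phi_{\text{max}}(t)\doteq \phi(x_{\ast}(t),t)$ and $\phi_{\text{min}}(t)\doteq \phi(y_{\ast}(t),t)$ are smooth functions of $t$ until they coincide or the flow becomes singular.

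Since $\phi_{s}$ vanishes at both critical points, differentiating $\phi^{2}$ in time gives
\[
\frac{d}{dt}\phi_{\text{max}}^{2}(t) = 2\phi\phi_{ss}(x_{\ast}(t),t) - 2(n-1), \qquad \frac{d}{dt}\phi_{\text{min}}^{2}(t) = 2\phi\phi_{ss}(y_{\ast}(t),t) - 2(n-1).
\]
At the neck, $\phi_{ss}\geq 0$ combined with $R_{g(t)}\geq 0$ yields $2\phi\phi_{ss}\leq n-1$, so $\phi_{\text{min}}^{2}(t)\leq \phi_{\text{min}}^{2}(0)-(n-1)t$ and the neck radius must vanish at some $T\leq \phi_{\text{min}}^{2}(0)/(n-1)$. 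At the bump, $A\geq \beta$ together with $\phi_{s}=0$ gives $\phi\phi_{ss}\geq \beta -1$, so $\phi_{\text{max}}^{2}(t)\geq \phi_{\text{max}}^{2}(0)-2(n-\beta)t$. The pinching hypothesis $r^{2}>(n+1-2\beta)/(n-1)+1$ is exactly $r^{2}(n-1)>2(n-\beta)$, so $\phi_{\text{max}}^{2}(T)>0$ and the bump survives beyond the instant at which the neck pinches off. To upgrade this to a Type-I statement I would control $\phi^{2}\lvert \text{Rm}_{g(t)}\rvert_{g(t)}$ on a neighbourhood of the singular set: Lemma \ref{sturmiantheoremneckpin} bounds the scale-invariant curvature outside a large ball, while on the annular region between that ball and the bump the preserved bounds $A\geq \beta$ and $R_{g(t)}\geq 0$ together with Lemma \ref{crucialestimateinterior} propagate $\phi^{2}\lvert \text{Rm}\rvert\leq C$ into the interior. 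Combined with a cylinder-comparison lower bound $\phi_{\text{min}}^{2}(t)\geq 2(n-1)(T-t)(1+o(1))$, obtained as in \cite[Lemma 7.3]{exampleneck}, this yields $\lvert \text{Rm}_{g(t)}\rvert_{g(t)}(T-t)\leq C$ locally near the neck and completes part (i).

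For the cylindrical asymptotics (ii) I would follow the matched-asymptotics scheme of \cite{IKS1}. Introducing the rescaled variables $u\doteq \phi/\sqrt{2(n-1)(T-t)}$, $\sigma\doteq S/\sqrt{T-t}$ and $\tau\doteq -\log(T-t)$, the equation for $u$ linearises around $u\equiv 1$ to a Hermite-type operator in the parabolic inner region. The strategy is to construct explicit upper barriers based on Hermite polynomials on the inner region $\lvert \sigma \rvert \leq c\sqrt{\tau}$, outer barriers patterned on the self-similar profile $u\sim C\lvert \sigma\rvert \sqrt{\log(\lvert \sigma\rvert/\tau)}/\sqrt{\tau}$ on $c\sqrt{\tau}\leq \lvert \sigma\rvert \leq (T-t)^{-\varepsilon/2}$, and to verify that the two match on the overlap. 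The main obstacle is that the outer barrier in \cite{exampleneck,IKS1} naturally terminates at a second critical point of $\phi$, whereas here it must be extended out to spatial infinity in a manner compatible with asymptotic flatness; Lemma \ref{sturmiantheoremneckpin} ensures that far beyond the bump the scale-invariant curvature is already extremely small, which places the solution well outside the region governed by the outer barrier and makes this extension routine once the barriers have been set up on the bounded region.
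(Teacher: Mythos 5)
Your proposal follows essentially the same route as the paper: part (i) is proved by exactly the same ODE comparison for $\phi_{\text{min}}^{2}$ and $\phi_{\text{max}}^{2}$ (using $R_{g(t)}\geq 0$ at the neck and the preserved bound $A\geq\beta$ at the bump, with the pinching hypothesis rewritten as $r^{2}(n-1)>2(n-\beta)$), followed by the two-sided bound $(n-1)(T-t)\leq\phi_{\text{min}}^{2}\leq 2(n-1)(T-t)$ and the scale-invariant curvature estimate to get Type-I. For part (ii) the paper likewise defers to the matched-asymptotics argument of \cite{IKS1}, after first establishing the two pointwise estimates $(\phi_{s}^{2}-1)/\phi\leq\alpha$ and $\phi_{ss}\phi\log\phi\geq-\alpha$ (hence $(T-t)\lvert K\rvert\leq C/\lvert\log(T-t)\rvert$ near the neck) that make that argument applicable; your sketch of (ii) is consistent with this but omits these preliminary barriers.
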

We first restate a result proved in \cite{exampleneck}; we omit the proof because it dose not require modifications.
\begin{lemma}[Angenent and Knopf \cite{exampleneck}, Lemma 5.6]\label{lemma1copied}
Let $g(t)$ be a Ricci flow defined on $[0,T)$ starting at $g_{0}$ as above. If $\phi_{ss}(x,0)\leq 0$ for $0\leq x\leq x_{\ast}(0)$ then $\phi_{ss}(x,t) \leq 0$ for all $0\leq x\leq x_{\ast}(t)$ for any $t\in [0,T)$.
\end{lemma}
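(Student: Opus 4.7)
The plan is to apply a parabolic maximum principle to $u \doteq \phi_{ss}$ on the time-varying parabolic cylinder
\[
D \doteq \{(x,t) : 0 \le x \le x_\ast(t),\ 0 \le t < T\},
\]
driven by the evolution equation \eqref{equationsecondderivative}. The parabolic boundary data are favourable: the smoothness conditions \eqref{boundaryconditions} force $u(0,t) = 0$ identically; the point $x_\ast(t)$ is a nondegenerate spatial maximum of $\phi(\cdot,t)$ by Corollary \ref{morsefunction}, hence $u(x_\ast(t), t) < 0$; and $u(\cdot, 0) \le 0$ by hypothesis. So there is no room on the parabolic boundary for $u$ to exceed zero.

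The essential input comes from a scalar constraint on $\phi_s$. So long as $u(\cdot, t) \le 0$ on $[0, x_\ast(t)]$, the function $\phi_s(\cdot, t)$ is non-increasing on this interval, interpolating between $\phi_s(0, t) = 1$ and $\phi_s(x_\ast(t), t) = 0$; Corollary \ref{morsefunction} guarantees that the only critical points of $\phi(\cdot, t)$ are the bump and the neck, so $\phi_s(x, t) \in (0, 1)$ strictly on $(0, x_\ast(t))$. Now suppose for contradiction that $u$ becomes positive somewhere in $D$, and let $t_0$ be the infimum of such times. Continuity forces $u(\cdot, t_0) \le 0$ with $\max u(\cdot, t_0) = 0$, attained at an interior point $x_0 \in (0, x_\ast(t_0))$. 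At $(x_0, t_0)$ we have $u = 0$, $u_s = 0$, $u_{ss} \le 0$, and $u_t \ge 0$ in the one-sided sense. Substituting these into \eqref{equationsecondderivative}, all terms proportional to $u$ or $u^2$ vanish, leaving
\[
u_t(x_0, t_0) = u_{ss}(x_0, t_0) - 2(n-1)\frac{\phi_s^{\,2}(1-\phi_s^{\,2})}{\phi^{3}}(x_0, t_0).
\]
By the previous paragraph $\phi_s(x_0, t_0) \in (0,1)$, so the reaction term is strictly negative, $u_{ss} \le 0$, and hence $u_t(x_0, t_0) < 0$, in contradiction with $u_t \ge 0$.

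The main obstacle is justifying the first-touching step rigorously, because $D$ has a moving lateral boundary $x = x_\ast(t)$ and the Dirichlet value $u = 0$ at $x = 0$ prevents a naive strict barrier. I would handle this by a standard perturbation: consider $u_\varepsilon \doteq u - \varepsilon(1+t)$, which is strictly negative on the whole parabolic boundary (including the origin, now separated from zero by $-\varepsilon$), run the argument above to conclude $u_\varepsilon \le 0$ in $D$, and let $\varepsilon \searrow 0$. One must also rule out that the first zero of $u$ occurs at the origin itself, which follows from the Taylor expansion $\phi_{ss}(s, t) = -\phi_{sss}(0,t)\, s + O(s^{3})$ at the smooth pole together with the fact that $-\phi_{sss}(0, t_0)$ coincides (up to a positive constant) with the radial sectional curvature at the origin, which remains positive by the Ricci positivity hypothesis propagated through the argument; a Hopf-type reasoning then shows that a first crossing cannot nucleate at $x = 0$. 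With these technical points in place the maximum-principle contradiction closes the proof.
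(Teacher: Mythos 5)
The paper does not actually prove this lemma: it is quoted from Angenent--Knopf with the remark that their proof of Lemma 5.6 carries over without modification, and the argument you give --- a first-touching maximum principle for $u=\phi_{ss}$ on the moving polar cap $\{0\le x\le x_\ast(t)\}$, using \eqref{equationsecondderivative} and the observation that at an interior zero maximum every term except $-2(n-1)\phi_s^2(1-\phi_s^2)/\phi^3$ drops out --- is exactly that proof. So the approach matches, and the core computation is right.

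There is, however, one step that does not hold as written. You claim $\phi_s(x_0,t_0)\in(0,1)$ \emph{strictly} and attribute this to Corollary \ref{morsefunction}; that corollary (via the Sturmian theorem) gives $\phi_s>0$ on $(0,x_\ast(t_0))$, but says nothing about $\phi_s<1$. From $\phi_s(0,t_0)=1$ and $\phi_{ss}(\cdot,t_0)\le 0$ you only get $\phi_s\le 1$, and $\phi_s(x_0,t_0)=1$ is possible precisely when $\phi_{ss}(\cdot,t_0)\equiv 0$ on $[0,x_0]$ --- in which case the reaction term vanishes, you only obtain $u_t(x_0,t_0)\le 0$, and there is no contradiction with $u_t\ge 0$. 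This degenerate case must be excluded (e.g.\ for $t_0>0$ it forces the metric to be flat on a ball, which contradicts real-analyticity of the flow unless $g(t_0)$ is globally flat), or one must run the maximum principle with the weak inequality, treating $-2(n-1)\phi_s^2(1-\phi_s^2)/\phi^3\le 0$ as a nonpositive inhomogeneity and using that the zero-order coefficient of $u$ is bounded above on the cap. Your $\varepsilon$-perturbation does not repair this by itself: once $u$ is only known to satisfy $u\le\varepsilon(1+t)$, the monotonicity of $\phi_s$ is lost, $1-\phi_s^2$ can become negative of order $\varepsilon$, and the resulting error $O(\varepsilon/\phi^3)$ is not controlled near the pole, which is also where your treatment of the corner $x=0$ (invoking a Ricci positivity that is not among the hypotheses of the lemma) remains heuristic.
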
 
We note that given $g_{0}$ as above the condition $\phi_{ss}(x,0)\leq 0$ for $0\leq x\leq x_{\ast}(0)$ is equivalent to requiring the Ricci tensor of $g_{0}$ to be nonnegative on the Euclidean ball centred at the origin of radius $x_{\ast}(0)$.
\\ From Lemma \ref{sturmiantheoremneckpin} we deduce that we can apply Lemma \ref{crucialestimateinterior} to the Ricci flow $g(t)$ and thus obtain that the estimate \eqref{crucialestimate} holds on $\R^{n+1}\times [0,T)$. Therefore, as shown in Lemma \ref{existlimit}, we have $\lvert(\phi^{2})_{t}\rvert \leq C$ uniformly in $[0,T)$; we get that the following limit exists
\[
D \doteq \lim_{t\nearrow T}\phi(x_{\ast}(t),t) = \lim_{t\nearrow T}\phi_{\text{max}}(t).
\] 
\noindent If $D > 0$ then there is no singularity forming around the origin; that was again proved in \cite{exampleneck} using Lemma \ref{lemma1copied} and the estimate \eqref{crucialestimate}, which in particular implies that $A= \phi^{2}(L-K)$ is uniformly bounded. As above, we only adapt the statement to the current setting.
\begin{lemma}[Angenent and Knopf \cite{exampleneck}, Lemma 7.2]\label{lemma2copied}
If $D>0$ the cap $((0,x_{\ast}(t))\times S^{n})\cup\{\origin\}$ stays smooth.
\end{lemma}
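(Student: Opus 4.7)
The plan is to promote the scale-invariant bound $\phi^2|\text{Rm}_{g(t)}|_{g(t)}\leq C$, which follows from Lemma \ref{sturmiantheoremneckpin} and Lemma \ref{crucialestimateinterior}, into a uniform $C^0$ curvature bound on the cap $\bar{B}(\origin,x_\ast(t))\times S^n$ as $t\nearrow T$. Once this is in place, Shi's derivative estimates \cite{shi} deliver uniform bounds on all $|\nabla^k\text{Rm}_{g(t)}|$, and a standard Arzel\`a--Ascoli argument yields a smooth limit metric on the cap at time $T$, from which the conclusion follows.

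First I would record the structural properties of the cap. Lemma \ref{lemma1copied} gives $\phi_{ss}\leq 0$ there, hence $K_{g(t)}\geq 0$. Since $\phi_s$ is non-increasing in arclength on the cap, while $\phi_s(\origin,t)=1$ by \eqref{boundaryconditions} and $\phi_s(x_\ast(t),t)=0$ by definition of the bump, we have $0\leq \phi_s\leq 1$, and therefore $L_{g(t)}=(1-\phi_s^2)/\phi^2\geq 0$ as well. In particular $\text{Rm}_{g(t)}\geq 0$ throughout the cap.

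Next I would exploit $D>0$ to bound $\phi$ from below away from the origin. Concavity of $\phi$ in $s$, together with $\phi(0,t)=0$ and $\phi(s_\ast(t),t)=\phi_{\text{max}}(t)\geq D/2$ for $t$ close to $T$, yields the linear lower bound $\phi(s,t)\geq \phi_{\text{max}}(t)\,s/s_\ast(t)$ on $[0,s_\ast(t)]$. In particular $\phi\geq D/4$ on the outer half of the cap, where the scale-invariant bound immediately gives a uniform curvature bound; standard distance distortion under the resulting bounded Ricci curvature then bounds $s_\ast(t)$ uniformly in $t$, so the concavity estimate upgrades to $\phi\geq c(\epsilon)>0$ on any annular sub-cap $\{s\geq \epsilon\}$, and consequently $|\text{Rm}_{g(t)}|_{g(t)}\leq C(\epsilon)$ there uniformly in $t$.

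The remaining task, and the chief obstacle, is to rule out curvature blow-up at the origin itself. Here I would adapt the barrier argument from \cite[Section 7]{exampleneck}: the non-negativity of $K$ on the cap, the uniform bounds on $\phi$, $\phi_s$ and $\phi_{ss}$ along the inner collar $\{s=\epsilon\}$ from the previous step, and the boundary conditions \eqref{boundaryconditions} (preserved under the flow), can all be fed into the parabolic equation \eqref{equationsecondderivative} to produce an upper barrier for $|\phi_{ss}|$ near the tip. Combined with the asymptotics $\phi\sim s$ at the origin from \eqref{boundaryconditions}, this bounds $K=-\phi_{ss}/\phi$, and hence also $L=(1-\phi_s^2)/\phi^2$, near the origin, completing the $C^0$ curvature bound. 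I expect the explicit barrier construction of \cite{exampleneck}, originally carried out on $S^{n+1}$, to transpose to $\R^{n+1}$ without essential change once the collar estimates of the previous step are in hand.
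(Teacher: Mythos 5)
Your overall route coincides with the paper's: the paper does not reprove this lemma but imports it from \cite{exampleneck}, observing that the two inputs to Angenent--Knopf's argument --- the preserved concavity $\phi_{ss}\le 0$ on the cap (Lemma \ref{lemma1copied}) and the scale-invariant bound \eqref{crucialestimate}, which makes $A=\phi^{2}(L-K)$ uniformly bounded --- are available in the present setting. Your preliminary observations ($K\ge 0$, $0\le\phi_s\le 1$, hence $L\ge 0$ on the cap; the chord bound $\phi\ge \phi_{\max}\,s/s_{\ast}$ from concavity and $D>0$; the resulting uniform curvature bound on $\{s\ge\epsilon\}$ via \eqref{crucialestimate}) are all correct and are exactly the reductions the citation relies on. A minor point: to bound $s_{\ast}(t)$ from above it is cleaner to use $\xi_t=-nK\xi\le 0$ on the cap (so radial distances are non-increasing there) than a generic distortion estimate, since you do not yet have a curvature bound along the inner portion of the radial geodesic.

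The one step you attempt to carry out yourself --- excluding blow-up at the tip --- does not close as written. A uniform upper barrier for $\lvert\phi_{ss}\rvert$ near the origin, combined with $\phi\sim s$, yields only $K=-\phi_{ss}/\phi\lesssim 1/s$, and likewise $1-\phi_s\lesssim s$ gives only $L=(1-\phi_s^{2})/\phi^{2}\lesssim 1/s$; both still diverge at $s=0$. To bound the curvature at the origin one needs the sharper statements $\lvert\phi_{ss}\rvert\le C\phi$ and $1-\phi_s^{2}\le C\phi^{2}$, i.e.\ a barrier comparable to $\phi$ (equivalently a direct bound on $K$ and $L$ themselves), and your sketch gives no mechanism for producing it --- note that the evolution equation \eqref{equationsecondderivative} for $\phi_{ss}$ contains the terms $-2\phi_{ss}^{2}/\phi$ and $-2(n-1)\phi_s^{2}(1-\phi_s^{2})/\phi^{3}$, which have the wrong sign at a negative minimum of $\phi_{ss}$, so a naive maximum-principle barrier for $\phi_{ss}$ fails precisely where $\phi$ is small. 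This tip estimate is the actual content of Angenent--Knopf's Lemma 7.2, so deferring to their construction is legitimate (and is what the paper does), but you should not present ``a bound on $\lvert\phi_{ss}\rvert$ plus $\phi\sim s$'' as the reason the cap stays smooth: that inference is false, and the correct argument must control the scale-invariant quantities $K$ and $L$ (not the bare second derivative) all the way to $s=0$.
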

 We are now ready to address the proof of (i) of Theorem \ref{asymptoticsscrittobene}.
\begin{proof}[Proof of (i) of Theorem \ref{asymptoticsscrittobene}] 
We first note that the scalar curvature is positive for any $t\in(0,T)$. From \eqref{asymptoticflat} we deduce that $A(x,t)\rightarrow 0$ at infinity for any time \cite{woolgar}; therefore $A(\cdot,0)$ has a nonpositive finite infimum $\beta \leq 0$. According to Lemma \ref{lowerboundApersists} such lower bound $\beta$ is preserved along the flow. 
\\Suppose that the neck disappears at some time $T^{\prime}\in (0,T)$; from the discussion above it follows that $\phi_{\text{max}}$ and $\phi_{\text{min}}$ are hence smooth functions in $[0,T^{\prime})$. By the implicit function theorem we find that the evolution equation of $\phi_{\text{min}}$ is given by (see also \cite[Lemma 6.1]{exampleneck}):
\begin{equation}\label{tointegrate}
\dot{\phi}_{\text{min}}(t) = \phi_{t}(y_{\ast}(t),t) = \phi_{ss}(y_{\ast}(t),t) - \frac{n-1}{\phi_{\text{min}}}.
\end{equation}
\noindent Since the scalar curvature is nonnegative we can bound the right hand side by 
\[
\dot{\phi}_{\text{min}}(t) \leq -\frac{n-1}{2\phi_{\text{min}}},
\]
\noindent which upon integration yields
\begin{equation}\label{evolutionminimum}
\phi_{\text{min}}^{2}(t)\leq \phi_{\text{min}}^{2}(0) - (n-1)t.
\end{equation}
\noindent From \eqref{evolutionminimum} we deduce that $T^{\prime} < \phi_{\text{min}}^{2}(0)/(n-1)$ otherwise the solution becomes singular before it loses its neck. We can similarly estimate the evolution equation of $\phi_{\text{max}}$ using the lower bound for $A$; we obtain
\[
\phi_{\text{max}}^{2}(t) \geq \phi_{\text{max}}^{2}(0) + 2(\beta - n)t.
\]
\noindent We finally conclude that for any $0\leq t < T^{\prime} < \phi_{\text{min}}^{2}(0)/(n-1)$ we have
\[
\phi_{\text{max}}^{2}(t) - \phi_{\text{min}}^{2}(t) \geq \phi_{\text{max}}^{2}(0) - \phi_{\text{min}}^{2}(0) -(n-2\beta +1)t \geq \phi_{\text{min}}^{2}(0)\left(r^{2} - 1 - \frac{n - 2\beta + 1}{n-1} \right).
\]
\noindent By choosing the pinching of the neck $r$ as in the statement of Theorem \ref{asymptoticsscrittobene} we obtain that the difference between $\phi_{\text{max}}(t)$ and $\phi_{\text{min}}(t)$ stays bounded away from zero until $\phi_{\text{min}}^{2}(0)/(n-1)$.
Since the radius of the bump is positive until $\phi_{\text{min}}^{2}(0)/(n-1)$, by Lemma \ref{lemma2copied} the cap around the origin stays smooth. In particular from the proof of Lemma \ref{lemma2copied} in \cite{exampleneck} it follows that there exists a radial coordinate $x_{1} > 0$ such that the curvature is uniformly bounded in the Euclidean ball $B(\origin,x_{1})$. Since by Lemma \ref{sturmiantheoremneckpin} we can apply Lemma \ref{crucialestimateinterior} and hence obtain that \eqref{crucialestimate} holds in $\R^{n+1}\times [0,T)$, we get 
\begin{equation}\label{type1bound}
\sup_{\R^{n+1}}\lvert \text{Rm}_{g(t)}\rvert_{g(t)}\leq \frac{C}{\phi_{\text{min}}^{2}},
\end{equation}
\noindent for any time sufficiently close to $\phi_{\text{min}}^{2}(0)/(n-1)$. Therefore the neck persists until some maximal time $\hat{t}\leq\phi_{\text{min}}^{2}(0)/(n-1)$ when it collapses while the radius of the bump stays positive, which implies that the flow develops a singularity before the neck may disappear. In particular, by integrating \eqref{tointegrate} using the condition $R_{g(t)}\geq 0$ and the crude estimate $\phi_{ss}(y_{\ast}(t),t)\geq 0$ we get
\[
(n-1)(T-t)\leq \phi^{2}_{\text{min}}(t)\leq 2(n-1)(T-t)
\]
\noindent which along with \eqref{type1bound} show that the singularity is Type-I.
\end{proof}
\subsection{Convergence to shrinking cylinders.}
In this subsection we adapt the analysis in \cite{IKS1} to show that the asymptotically flat Type-I flows constructed above satisfy the same cylindrical asymptotics.
\\By the Sturmian theorem we can define a time-dependent neighbourhood of the neck as follows:
\[
\Omega \doteq \left \{ \phi_{ss}\log\left(\frac{\phi}{\delta}\right) < 0 \right\}
\]
\noindent for some $\delta > 0$ sufficiently small. We need a preliminary estimate, which in the asymptotically flat case holds on the entire space-time.
\begin{lemma}
Let $(\R^{n+1},g(t))_{0\leq t < T}$ be a rotationally symmetric $\epsilon$-asymptotically flat Ricci flow with $T < \infty$. There exists $\alpha > 0$ such that 
\[
\frac{1}{\phi}(\phi_{s}^{2}-1)\leq \alpha,\,\,\,\,\,\,\,\,\,\,\phi_{ss}\phi\log(\phi) \geq -\alpha > -\infty,
\]
\noindent uniformly in $\R^{n+1}\times [0,T)$.
\end{lemma}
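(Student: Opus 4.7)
The strategy for both bounds is a parabolic maximum principle on $\R^{n+1}\times[0,t_{0}]$ for arbitrary $t_{0}<T$, combined with a vanishing exponential barrier as in Lemma~\ref{lowerboundApersists} to accommodate the non-compact domain. The background inputs are the global estimate $\phi^{2}|\text{Rm}_{g(t)}|_{g(t)}\leq C$ on $\R^{n+1}\times[0,T)$ (from Lemma~\ref{sturmiantheoremneckpin} combined with Lemma~\ref{crucialestimateinterior}), the improved decay $\phi^{2+\epsilon/2}|\text{Rm}_{g(t)}|_{g(t)}\leq C$ outside a ball, and the lower bound $A\geq\beta$ preserved along the flow.

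For the first inequality I would introduce $H\doteq 1-\phi_{s}^{2}+\alpha\phi$ and prove $H\geq 0$. Initially, $H(\cdot,0)\geq 0$ holds once $\alpha$ exceeds the finite supremum of $(\phi_{s}^{2}-1)_{+}/\phi$ on $g_{0}$: near the origin \eqref{boundaryconditions} gives $\phi=s+O(s^{3})$ and hence $(\phi_{s}^{2}-1)/\phi=O(\phi)$; on any compact annulus $\phi$ is bounded away from zero; and at spatial infinity asymptotic flatness forces $\phi_{s}\to 1$ while $\phi\to\infty$. Using \eqref{equationphi}, \eqref{equationfirstderivative} and \eqref{formulalaplacian}, one computes
\[
H_{t}-\Delta H=2\phi_{ss}^{2}+4\frac{\phi_{s}^{2}\phi_{ss}}{\phi}-2(n-1)\frac{\phi_{s}^{2}(1-\phi_{s}^{2})}{\phi^{2}}-\alpha\frac{(n-1)+\phi_{s}^{2}}{\phi}.
\]
At a putative interior negative minimum of $H$, the first-order condition $H_{s}=0$ forces $\phi_{s}(\alpha-2\phi_{ss})=0$; the case $\phi_{s}=0$ is immediately excluded since it gives $H=1+\alpha\phi>0$, so $\phi_{ss}=\alpha/2$ and $\phi_{s}^{2}>1+\alpha\phi>1$. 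Substituting, the evolution simplifies to
\[
H_{t}-\Delta H=\frac{\alpha^{2}}{2}+\alpha\,\frac{\phi_{s}^{2}-(n-1)}{\phi}+2(n-1)\,\frac{\phi_{s}^{2}(\phi_{s}^{2}-1)}{\phi^{2}},
\]
which is strictly positive: for $\phi_{s}^{2}\geq n-1$ every term is nonnegative, while for $\phi_{s}^{2}<n-1$ the inequality $\phi_{s}^{2}-1>\alpha\phi$ forces $2(n-1)\phi_{s}^{2}(\phi_{s}^{2}-1)/\phi^{2}>2(n-1)\alpha/\phi$, absorbing the negative middle term $-\alpha(n-1)/\phi$. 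This contradicts $H_{t}\leq\Delta H$ at the minimum, and the standard $\varepsilon W$ barrier from Lemma~\ref{lowerboundApersists} handles the unboundedness of $\R^{n+1}$.

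The second inequality $\phi\phi_{ss}\log\phi\geq-\alpha$ is genuinely subtle, because the naive estimate $|\phi\phi_{ss}|\leq C$ only gives $\phi\phi_{ss}\log\phi\geq -C|\log\phi|$, which diverges as $\phi\to 0$. The asymptotic regimes are direct: outside a bounded ball, Lemma~\ref{sturmiantheoremneckpin} gives $|\phi\phi_{ss}|\leq C\phi^{-\epsilon/2}$ and therefore $|\phi\phi_{ss}\log\phi|\leq C\log\phi/\phi^{\epsilon/2}\to 0$ at spatial infinity; near the origin, \eqref{boundaryconditions} gives $\phi_{ss}=O(\phi)$ and hence $\phi\phi_{ss}\log\phi=O(\phi^{2}\log\phi)\to 0$. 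The hard part is a compact intermediate region around an interior minimum (neck) of $\phi$, where $\phi$ may approach zero but the origin-expansion is unavailable; there one needs the nontrivial logarithmic improvement $\phi\phi_{ss}\lesssim 1/|\log\phi|$. The plan is to apply the maximum principle to
\[
J\doteq\phi\phi_{ss}\log\phi+\alpha=(A+\phi_{s}^{2}-1)\log\phi+\alpha,
\]
using the evolution equation \eqref{evolutionA} for $A$, the identity $\phi_{t}=\phi_{ss}-(n-1)(1-\phi_{s}^{2})/\phi$ to differentiate $\log\phi$, the first bound just established (which controls $(\phi_{s}^{2}-1)\log\phi$ in the problematic regime via $|\phi_{s}^{2}-1|\leq\alpha\phi+1$ and $\phi|\log\phi|\to 0$), and the lower bound $A\geq\beta$. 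The main obstacle is that differentiating $\log\phi$ produces terms of the form $\phi_{t}/\phi$ and $\phi_{s}^{2}/\phi^{2}$ which blow up as $\phi\to 0$; at a putative negative minimum of $J$ these must be dominated by the reaction term $-4(n-1)(\phi_{s}^{2}/\phi^{2})A$ in \eqref{evolutionA}, which has the correct damping sign precisely when $A>0$ (the regime where the logarithmic correction is needed), using crucially that $|\log\phi|$ grows slower than any negative power of $\phi$. Once the differential inequality for $J$ is closed, the barrier argument of Lemma~\ref{lowerboundApersists} concludes.
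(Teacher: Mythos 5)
Your treatment of the first inequality is correct and is essentially the paper's argument in a different normalization: the paper runs the maximum principle directly on $f = (\phi_{s}^{2}-1)/\phi$ at a first large interior maximum, while you run it on $H = 1-\phi_{s}^{2}+\alpha\phi = -\phi f + \alpha\phi$ at a first negative minimum; the computations and the case analysis at the critical point are equivalent, and the boundary/barrier discussion is adequate.

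The second inequality, however, is the substantive content of the lemma, and there you have only a plan, not a proof. You correctly isolate the hard regime (the neck, where $\phi\to 0$ away from the origin) and correctly observe that the crude bound $\lvert\phi\phi_{ss}\rvert\leq C$ is off by a factor of $\lvert\log\phi\rvert$, but the differential inequality for your quantity $J=(A+\phi_{s}^{2}-1)\log\phi+\alpha$ is never closed: you state that the singular terms produced by differentiating $\log\phi$ "must be dominated by the reaction term $-4(n-1)(\phi_{s}^{2}/\phi^{2})A$," without verifying this, and the sign claim is dubious as stated --- when $A>0$ that term is a \emph{negative} contribution to $A_{t}$, and since $\log\phi<0$ in the relevant regime the induced contribution to $J_{t}$ must still be weighed against the $\phi_{t}/\phi$ and $(\log\phi)_{ss}$ terms, none of which you estimate. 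The paper instead works directly with $\psi=\phi\phi_{ss}\log\phi$: at a first space--time minimum with $\psi=-M$, the uniform bound on $\phi\phi_{ss}$ together with the decay at spatial infinity forces $\phi(x_{0},t_{0})$ to be small (hence $\phi_{ss}>0$), and the computed evolution inequality has the dominant favourable term $-2\psi\phi_{ss}/\phi=2M\phi_{ss}/\phi$, while the worst unfavourable term, of size $\alpha\lvert\log\phi\rvert/\phi$, equals $(\phi_{ss}/\phi)\cdot\alpha\phi(\log\phi)^{2}/M$ and is therefore negligible for $M$ large because $\phi(\log\phi)^{2}\to 0$. That quantitative balancing --- identifying $\lvert\log\phi\rvert/\phi_{ss}=M^{-1}\phi(\log\phi)^{2}$ as the small parameter --- is the key step of the proof, and it is missing from your proposal; until the evolution inequality for your $J$ (or for $\psi$) is actually computed and shown to have a favourable sign at the critical point, the second estimate is not established.
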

\begin{proof}
Define $f \doteq (\phi_{s}^{2}-1)/\phi$. From \eqref{boundaryconditions} and Lemma \ref{sturmiantheoremneckpin} we derive that if $f$ is not uniformly bounded from above in $[0,T)$ then for any large value $M$ there exists a first maximum in the space-time such that $f(x_{0},t_{0}) = M$. A simple computation gives
\[
f_{t}(x_{0},t_{0}) \leq -\frac{(\phi_{s}^{2} - 1)^{2}}{2\phi^{3}} + \frac{(\phi_{s}^{2} - 1)}{\phi^{3}}\left(\phi_{s}^{2}(-2(n-1) -1) + n-1\right). 
\]
\noindent Since $\phi_{s}^{2}(x_{0},t_{0}) > 1$ we get
\[
f_{t}(x_{0},t_{0}) < -n\frac{M}{\phi^{2}} < 0,
\]
\noindent which proves the first inequality.
\\For the second estimate we proceed similarly. The boundary conditions and the asymptotic flatness imply that $\psi \doteq \phi_{ss}\phi\log(\phi)$ is uniformly bounded from below at the origin and at spatial infinity. Let $(x_{0},t_{0})$ be the first minimum point such that $\psi(x_{0},t_{0}) = -M$ for some large $M$; by  the estimate \eqref{crucialestimate}, which holds in the asymptotically flat case by Lemma \ref{sturmiantheoremneckpin}, we can assume without loss of generality that $\phi(x_{0},t_{0}) < 1$ and hence that $\phi_{ss}(x_{0},t_{0}) > 0$. A long but straightforward computation yields
\begin{align*}
\psi_{t}(x_{0},t_{0}) &\geq \frac{1}{\phi}\left( -2\psi\phi_{ss} + \phi_{ss}\log(\phi)\phi_{s}^{2}(-4n + 8) + \phi_{ss}(-(n-1) + 4\phi_{s}^{2})\right) \\ &+ \frac{1}{\phi}\left(2\phi_{s}^{2}\frac{\phi_{ss}}{\log(\phi)} + \log(\phi)\frac{1-\phi_{s}^{2}}{\phi}\phi_{s}^{2}(-2(n-1))\right). 
\end{align*}
\noindent By using that $(\phi_{s}^{2}-1)/\phi$ is uniformly controlled from above we can bound the right hand side from below as follows:
\[
\psi_{t}(x_{0},t_{0}) \geq \frac{\phi_{ss}}{\phi}\left( 2M -n + 1 - 2\frac{\alpha}{\lvert\log(\phi)\rvert} - \frac{\alpha \lvert\log(\phi)\rvert}{\phi_{ss}}\right).
\]
\noindent Since by \eqref{crucialestimate} $\phi\phi_{ss}$ is uniformly bounded, by taking $M$ large we make $\phi$ as small as we need; therefore $\lvert\log(\phi)\rvert/\phi_{ss} = M^{-1}\phi(\log(\phi))^{2}$ is small and the right hand side is positive for $M$ large enough. 
\end{proof}
Since the neck is shrinking at a Type-I rate one can rewrite the second inequality in the previous Lemma in a more geometric way.
\begin{corollary}\label{corollarydoggo}
In the region $\Omega$ where $K \leq 0$ the following holds:
\[
(T-t)\lvert K \rvert \leq \frac{C}{\lvert \log(T-t)\rvert}.
\]
\end{corollary}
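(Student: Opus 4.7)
The plan is to combine the second inequality in the preceding Lemma with the Type-I lower bound $\phi_{\min}^{2}(t)\geq (n-1)(T-t)$ established in the proof of Theorem \ref{asymptoticsscrittobene}(i). First I would observe that $\Omega\cap\{K\leq 0\}=\{\phi_{ss}>0\}\cap\{\phi<\delta\}$; taking $\delta$ small enough that $\log\phi<0$ on this region and using $|K|=\phi_{ss}/\phi$, the estimate $\phi_{ss}\phi\log\phi\geq -\alpha$ rearranges as
\begin{equation*}
\phi^{2}|K|\,|\log\phi|\leq \alpha.
\end{equation*}
Moreover, by Lemma \ref{lemma1copied} the cap around the origin has $\phi_{ss}\leq 0$ throughout the flow, so $\Omega\cap\{K\leq 0\}$ lies entirely in a bounded neighbourhood of the neck, encircled by the two inflection points flanking it; in particular $\phi\geq \phi_{\min}(t)$ on this region.

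I would then exploit the monotonicity of $u\mapsto u^{2}|\log u|$ on $(0,e^{-1/2})$ — its derivative $u(2|\log u|-1)$ is positive there — to transfer the lower bound from $\phi$ to $\phi_{\min}$. Shrinking $\delta$ if necessary so that $\delta<e^{-1/2}$, this yields
\begin{equation*}
\phi^{2}|\log\phi|\geq \phi_{\min}^{2}(t)\,|\log\phi_{\min}(t)|\geq c(n)\,(T-t)|\log(T-t)|
\end{equation*}
for $t$ sufficiently close to $T$, where the last inequality is obtained by plugging in $\phi_{\min}^{2}(t)\geq (n-1)(T-t)$ and absorbing the constant $\log(n-1)$ into the logarithm. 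Substituting into the previous display gives $(T-t)|K|\leq C/|\log(T-t)|$, which is exactly the claimed estimate.

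I do not foresee a real obstacle: the proof is essentially an algebraic manipulation of the preceding Lemma once one spots the correct monotone quantity $u^{2}|\log u|$, and the geometric input—that the cap is excluded from $\Omega\cap\{K\leq 0\}$—is handed to us by Lemma \ref{lemma1copied}. The one point requiring mild attention is ensuring that $\delta$ is small enough for both the rearrangement (so $\log\phi<0$) and the monotonicity step (so $\phi<e^{-1/2}$) to apply simultaneously, but since $\delta$ is already chosen "sufficiently small" in the definition of $\Omega$ this is automatic.
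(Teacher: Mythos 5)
Your argument is correct and is precisely the computation the paper intends (the paper only remarks that one ``rewrites'' the Lemma using the Type-I rate of the neck): rearrange $\phi_{ss}\phi\log\phi\geq-\alpha$ into $\phi^{2}\lvert K\rvert\,\lvert\log\phi\rvert\leq\alpha$ on $\{\phi_{ss}>0,\ \phi<\delta\}$, use Lemma \ref{lemma1copied} to exclude the cap so that $\phi\geq\phi_{\min}(t)$ there, and convert via the monotonicity of $u^{2}\lvert\log u\rvert$ and the two-sided bound $(n-1)(T-t)\leq\phi_{\min}^{2}\leq 2(n-1)(T-t)$. The details you supply (choice of $\delta<e^{-1/2}$, absorbing the dimensional constants into the logarithm for $t$ near $T$) are exactly the right ones.
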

One can then rely on the argument in \cite[Lemma 16]{IKS1} which extends to our setting up to taking $f = g$ and generalizing it to any dimension $n + 1 \geq 3$. That completes the proof of (ii) of Theorem \ref{asymptoticsscrittobene}.
\subsection{Initial data leading to Type-I neckpinches.}
In this subsection we sketch how by adapting the analogous construction in \cite{exampleneck} one can find initial data satisfying the assumptions of Theorem \ref{asymptoticsscrittobene}. We explicitly consider the case $n\geq 4$; the cases $n =2,3$ only require a further smoothing step where by perturbing the metrics described in \cite{exampleneck} one can obtain initial data that are again asymptotically flat.
Given $0 < \alpha < 1$ we define
\[
f:x\mapsto \begin{cases} 
      \sin(x) & 0\leq x\leq x_{\alpha} \\
      W_{\alpha}(x) \equiv \sqrt{\alpha + (x-\frac{\pi}{2})^{2}} & x\geq x_{\alpha}
      \end{cases}
\]
\noindent where $x_{\alpha}$ is the unique intersection between $W_{\alpha}$ and $\sin$ in $(0,\pi/2)$. By the analysis in Section 8 of \cite{exampleneck} we can smooth $f$ so that we obtain a radius $\phi$ that satisfies the following properties:
\begin{itemize}
\item[(i)] $\phi(x)$ coincides with $\sin(x)$ in a radial neighbourhood of the origin so that the rotationally symmetric metric $g_{0}= (dx)^{2} + \phi^{2}(x)\hat{g}$ is a smooth metric on $\R^{n+1}$ by \eqref{boundaryconditions}.
\item[(ii)] $R_{g_{0}}\geq 0$ by \cite[Lemma 8.1]{exampleneck}.
\item[(iii)] There exists $\tilde{\alpha} > 0$ such that for any $\alpha < \tilde{\alpha}$ $g_{0}$ has a bump of radius $\phi_{\text{max}}(0)\geq \tilde{\alpha}$ and has a neck of radius $W_{\alpha}(\pi/2) = \sqrt{\alpha}.$
\item[(iv)] The scale invariant quantity $A_{g_{0}}$ satisfies $A_{g_{0}}\geq \beta$ with $\beta$ independent of $\alpha$.
\item[(v)] By direct computation one can check that $g_{0}$ satisfies \eqref{asymptoticflat} for any $\epsilon \leq 2$ and hence is asymptotically flat.
\end{itemize}
\noindent Therefore, by choosing $\alpha$ small enough we obtain a metric $g_{0}$ for which Theorem \ref{asymptoticsscrittobene} applies; equivalently, the Ricci flow evolving from $g_{0}$ develops a local Type-I singularity where the radius of the neck vanishes. 
\subsection{Immortal Ricci flows with an initial neck.} It remains to show that there exist rotationally symmetric (asymptotically flat) Ricci flows on $\R^{n+1}$ that have an initial mild neck which then disappears in finite-time. The evolution equations of the scalar curvature and the scale-invariant quantity $A$ prevent us from deriving general conditions which may describe the situation where the neck is \emph{sufficiently mild} similarly to Theorem \ref{asymptoticsscrittobene}; equivalently, it seems much harder to check whether the assumptions in Theorem \ref{asymptoticsscrittobene} are in some sense optimal. However we are still able to show that there exist examples of Ricci flows evolving an initial metric with a neck to a metric with no minimal embedded hyperspheres in finite time.
\\We recall that given two Riemannian metrics $g_{1}$, $g_{2}$ we say that $g_{1}$ is $\varepsilon$-close to $g_{2}$ if 
\[
(1+\varepsilon)^{-1}g_{2}\leq g_{1}\leq (1 + \varepsilon)g_{2}.
\]
The following argument heavily relies on \cite{schulze}.
\begin{proof}[Proof of Proposition \ref{propositionlercia}]
Let $\varepsilon = 1$ and choose $\varepsilon_{0}$ to satisfy \cite[Theorem 1.2]{schulze}. The existence of an initial neck-like region is compatible with $g_{0}$ being $\varepsilon_{0}$-close to the Euclidean metric; in particular, the difference between the radius of the bump and the radius of the neck is smaller than $2\varepsilon_{0}$. Corollary \ref{morsefunction} then implies that the critical values of the radius $\phi$ are smooth functions of time; by the implicit function theorem and the evolution equation \eqref{equationphi} we find
\[
\phi_{\text{max}}\dot{\phi}_{\text{max}}(t)\leq - (n-1).
\]
\noindent We conclude that the neck must disappear at some finite time $\hat{t} < \phi_{\text{max}}^{2}(0)/2(n-1)$, otherwise the flow would develop a finite-time singularity hence contradicting that $g(t)$ is immortal due to \cite[Theorem 1.2]{schulze}.
\end{proof}
\begin{remark}
We point out that in the examples given by Proposition \ref{propositionlercia} one can control from above the time elapsed along the flow before the neck disappears by $\phi_{\text{max}}^{2}(0)/2(n-1)$. 
\end{remark}
\bibliographystyle{abbrv}
\bibliography{references_rotational}

\end{document}